\DeclareMathOperator{\Ad}{Ad}
\DeclareMathOperator{\rank}{rank}
\newcommand{\bbar}{\begin{pmatrix}}
\newcommand{\ebar}{\end{pmatrix}}
\newcommand{\bdm}{\begin{displaymath}}
\newcommand{\edm}{\end{displaymath}}
\newcommand{\beq}{\begin{equation}}
\newcommand{\beqa}{\begin{eqnarray}}
\newcommand{\beqas}{\begin{eqnarray*}}
\newcommand{\eeq}{\end{equation}}
\newcommand{\eeqa}{\end{eqnarray}}
\newcommand{\eeqas}{\end{eqnarray*}}
\newcommand{\dd}{\textup{d}}
\newcommand{\C}{{\mathbb C}}
\newcommand{\D}{{\mathbb D}}
\newcommand{\real}{{\mathbb R}}
\newcommand{\SSS}{{\mathbb S}}
\newcommand{\bt}{{\bf t}}
\newcommand{\bn}{{\bf n}}
\newcommand{\bb}{{\bf b}}
\newcommand{\sign}{\mathrm{sign}}
   \newtheorem{theorem}{Theorem}[section]
   \newtheorem{proposition}[theorem]{Proposition}
   \newtheorem{lemma}[theorem]{Lemma}
 \theoremstyle{remark}
   \newtheorem{remark}[theorem]{Remark}
\numberwithin{equation}{section}
\numberwithin{theorem}{section}
\begin{document}

\title{Families of spherical surfaces and harmonic maps}
\author{David Brander}
\address{Department of Applied Mathematics and Computer Science\\ Matematiktorvet, Building 303 B\\
Technical University of Denmark\\
DK-2800 Kgs. Lyngby\\ Denmark}
\email{dbra@dtu.dk}

\author{Farid Tari}
\address{Instituto de Ci\^encias Matem\'aticas e de Computa\c{c}\~ao - USP\\
Avenida Trabalhador S\~ao-Carlense, 400 - Centro \\
CEP: 13566-590 - S\~ao Carlos - SP, Brazil.\\}
\email{ faridtari@icmc.usp.br}

\keywords{Bifurcations, differential geometry, discriminants, integrable systems, loop groups, parallels, spherical surfaces, constant Gauss curvature, singularities, Cauchy problem, wave fronts.}
\subjclass[2000]{Primary 53A05, 53C43; Secondary 53C42, 57R45}

\begin{abstract}
We study singularities of constant positive Gaussian curvature surfaces and determine the way they bifurcate in generic 1-parameter families of such surfaces. We construct the bifurcations explicitly
using loop group methods. Constant Gaussian curvature surfaces correspond to harmonic maps, and
we examine the relationship between the two types of maps and their singularities.  Finally, we determine
 which finitely $\mathcal A$-determined map-germs from the plane to the plane can be represented by harmonic maps.
\end{abstract}
\maketitle

\section{Introduction}
Constant positive Gaussian curvature surfaces, called {\it spherical surfaces}, are related to harmonic maps $N:\Omega\to \SSS^2$, from a domain $\Omega\subset \mathbb R^2\sim \mathbb C$ to the unit sphere $\SSS^2\subset \mathbb R^3$. A spherical surface can also be realized as a parallel of a constant mean curvature (CMC) surface. 
Parallels are wave fronts and parallels of general surfaces are well studied (see for example 
\cite{ArnoldWavefront,arnold1990,bruceParallel}).

There are no \emph{complete} spherical surfaces other than the round sphere.
However, there is a  rich global class of spherical surfaces defined in terms of harmonic maps
(see Section \ref{sec:Prelm}), the global study of which necessitates the
introduction of surfaces with singularities
(Figure \ref{fig:gen_examples}).
In \cite{spherical}, a study of these surfaces from this point of view was carried out, with the
goal of getting a sense of what spherical surfaces typically look like in the large.  
Visually, singularities are perhaps the most obvious landmarks on a surface, and therefore
an essential task is to determine the generic (or stable) singularities of a surface class.
After the stable singularities, the next most common singularity type are the 
\emph{bifurcations} in generic 1-parameter families of the surfaces.  Understanding these
for spherical surfaces is the motivation for this work.  

\begin{figure}[htb]
	\centering
	$
	\begin{array}{ccc}
\includegraphics[height=32mm]{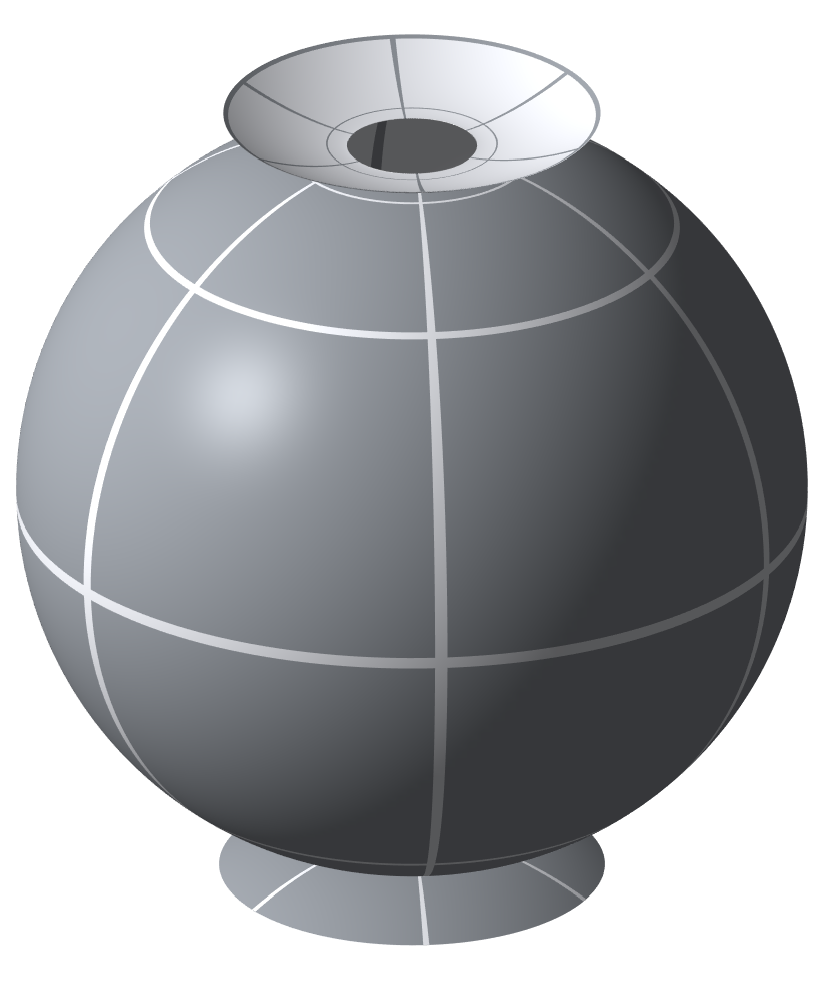}  \quad & 
\includegraphics[height=32mm]{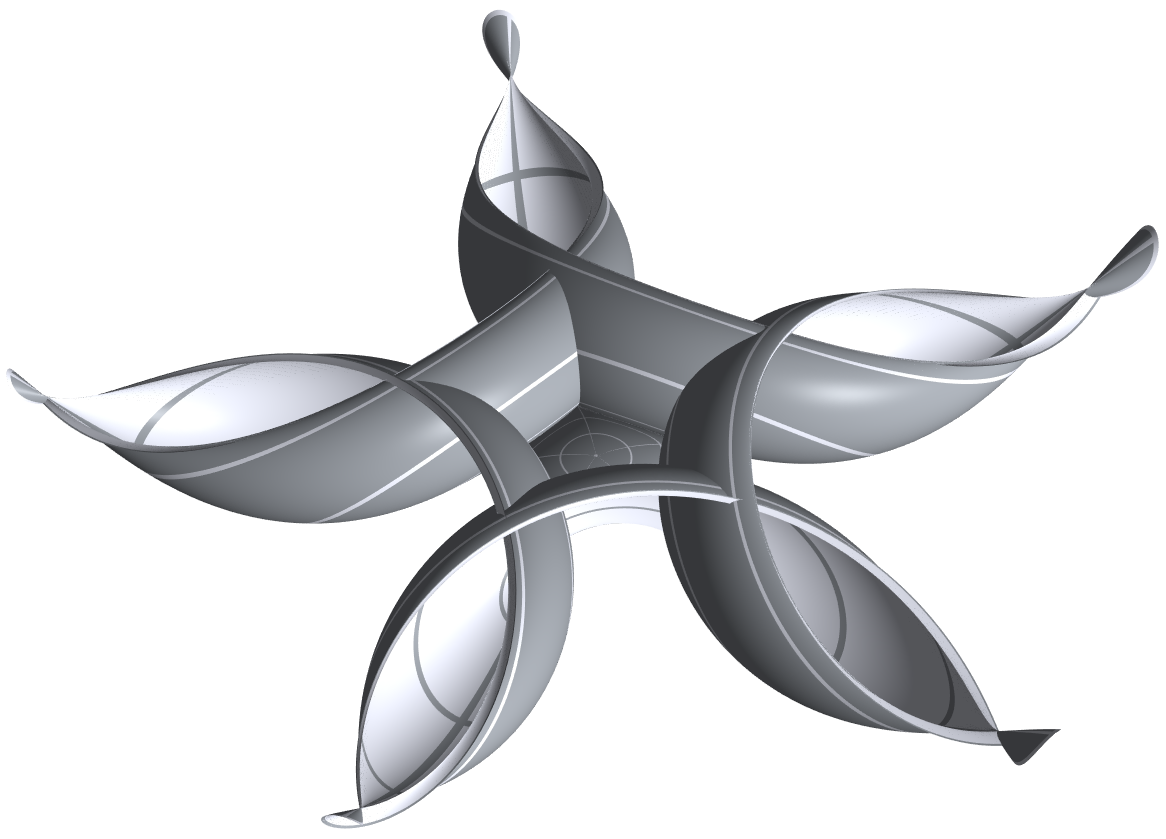}   & \quad
	\includegraphics[height=32mm]{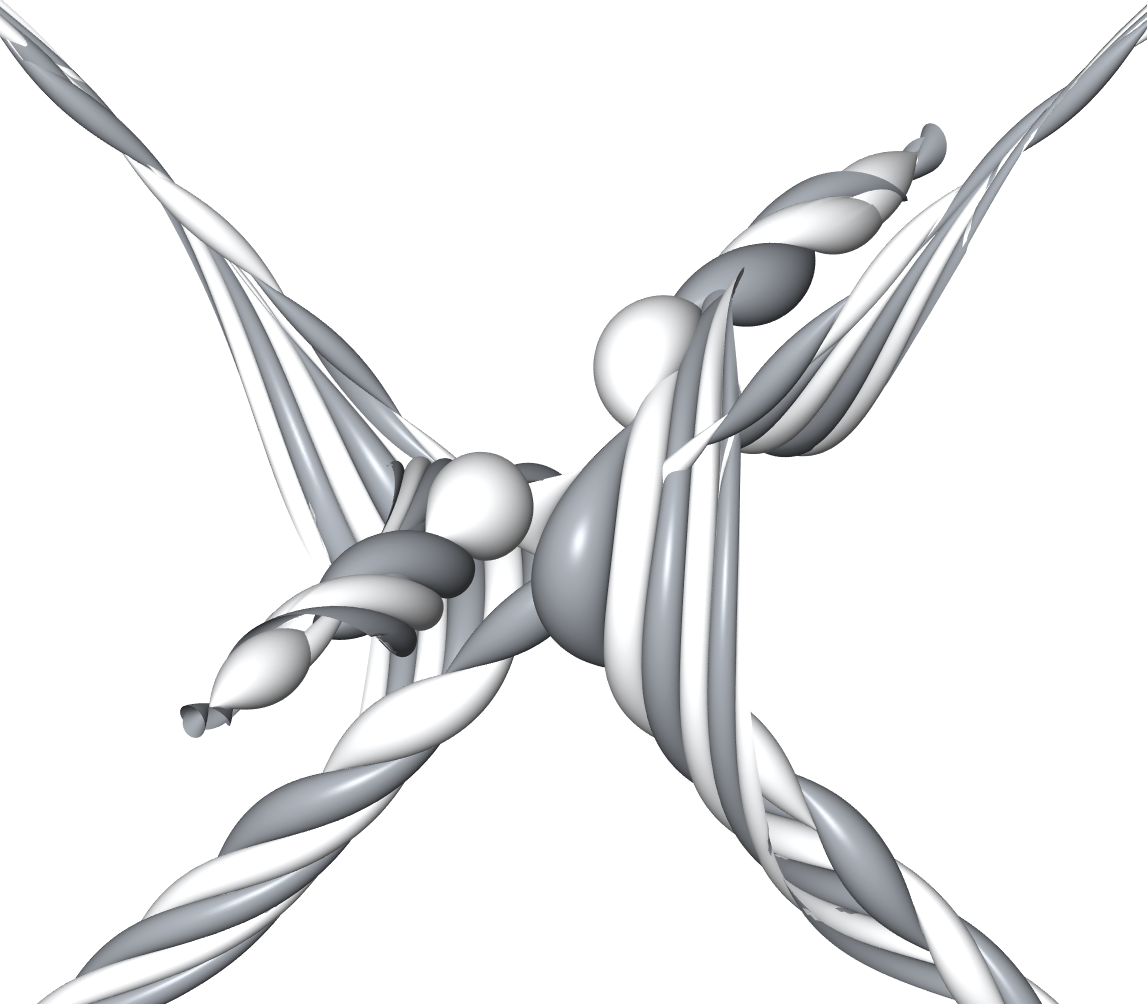}  
	\end{array}
	$
	\caption{Examples of spherical surfaces.}
	\label{fig:gen_examples}
\end{figure}

It was shown in \cite{ishimach} that the stable singularities of spherical surfaces are cuspidal edges and swallowtails (see Figure \ref{fig:StableSS}). 
It is suggested in 
\cite{spherical} that, in generic 1-parameter families of spherical surfaces, we could obtain the cuspidal beaks and
 the cuspidal butterfly bifurcations. 
We prove in this paper that indeed these are the only generic bifurcations that can occur in generic 1-parameter families
of spherical surfaces (\S \ref{sec:constructionSS}).

\begin{figure}[htp]
\begin{center}
\includegraphics[height=2.2cm]{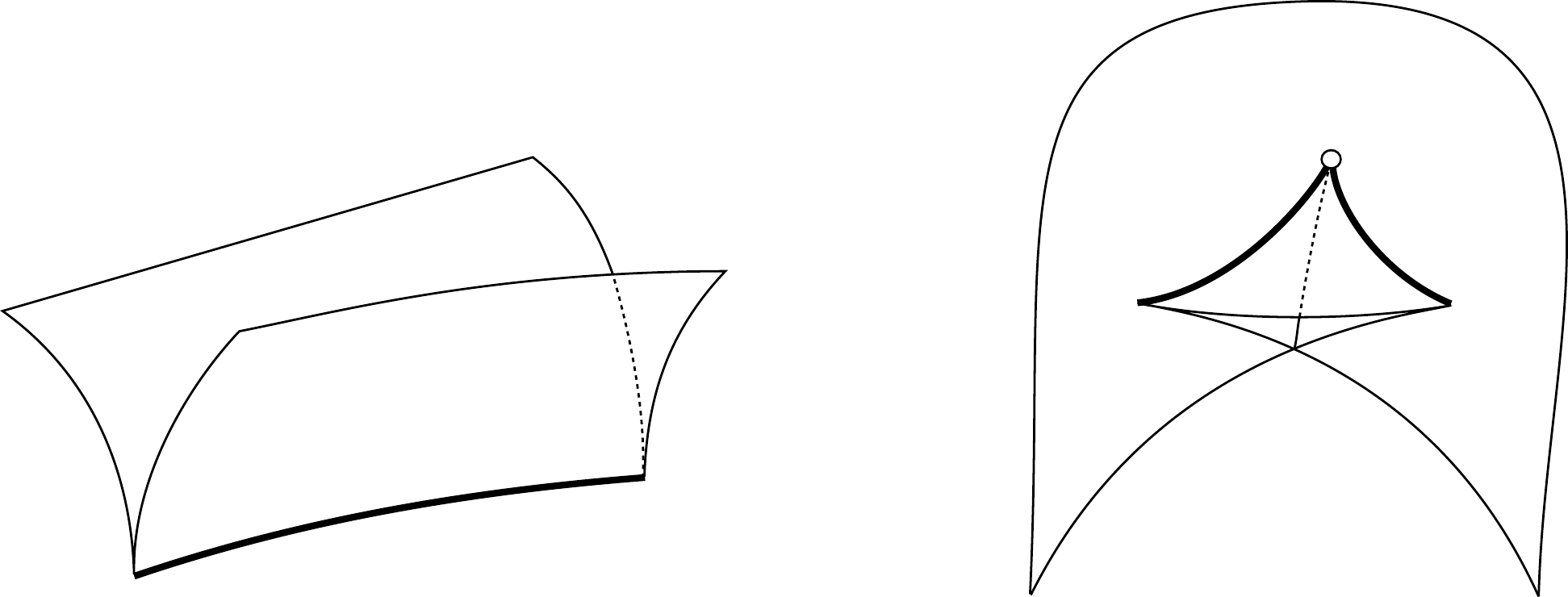}
\caption{Stable wave fronts and parallels: cuspidal edge (left) and swallowtail (right).
	 Both cases occur on spherical surfaces (\cite{ishimach}).}
\label{fig:StableSS}
\end{center}
\end{figure}

For the study of bifurcations, we use the fact that 
spherical surfaces are parallels of CMC surfaces, hence wave fronts. 
Recall that the evolutions in wave fronts are studied by Arnold in \cite{ArnoldWavefront}.
Bruce showed in \cite{bruceParallel} which of the possibilities in \cite{ArnoldWavefront} can actually occur
and proved that the generic bifurcations 
for parallels of surfaces in $\mathbb R^3$ are the following: (non-transverse) $A_3^{\pm}$, $A_4$ and $D_4^{\pm}$; see \S \ref{sec:singSephSurf} for notation, and 
Figure~\ref{fig:familiesSS}.

\begin{figure}[htp]
\begin{center}
\includegraphics[height=6.5cm]{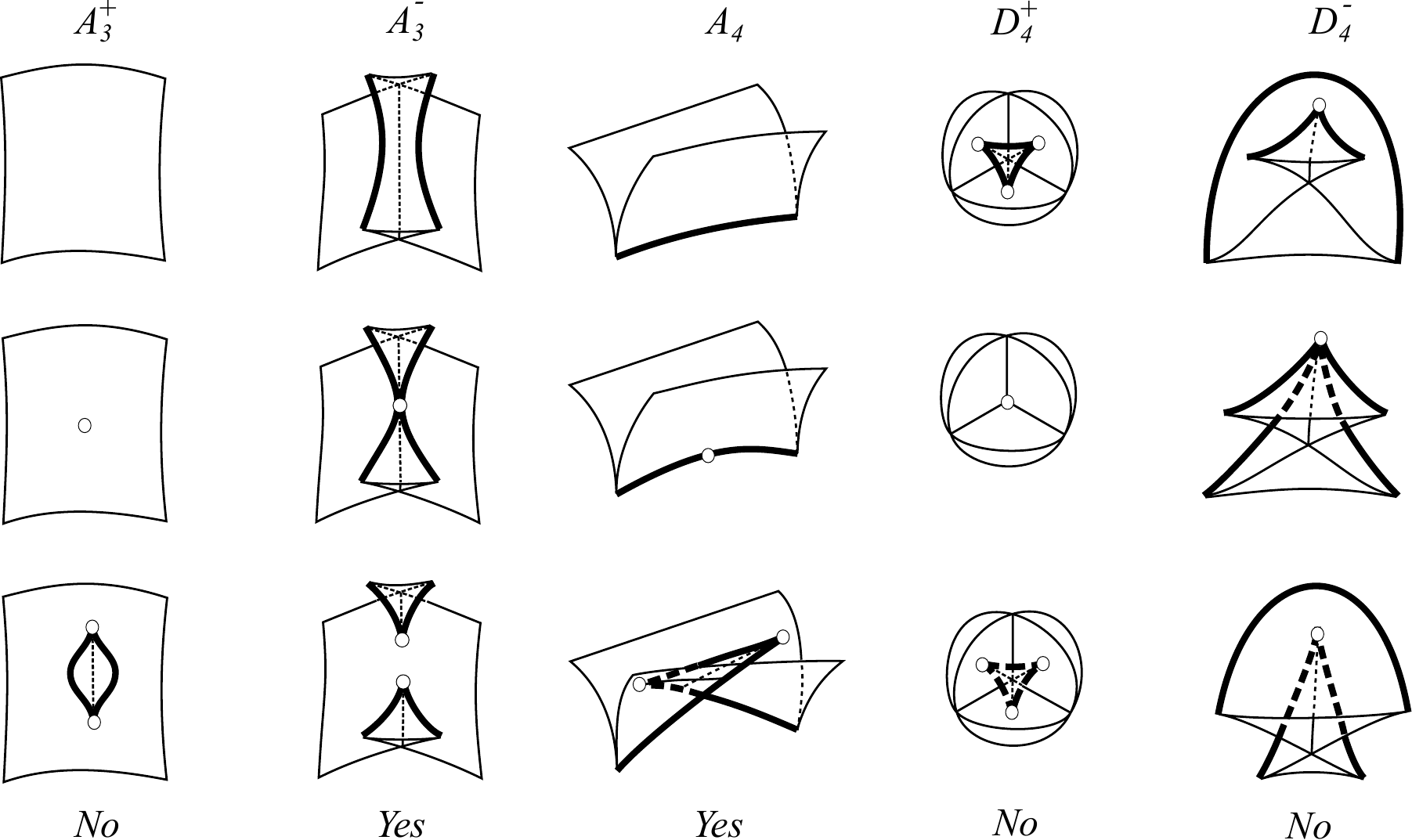}
\caption{Generic evolution of wave fronts from (\cite{ArnoldWavefront}). "Yes" for those that can occur on families of spherical surfaces and "No" for those that do not.}
\label{fig:familiesSS}
\end{center}
\end{figure}

Spherical surfaces are special surfaces, so we should not expect all the cases considered by Arnold and Bruce to occur. 
Indeed, the $A_3^{+}$ and $D_4^{\pm}$ cases do not occur for spherical surfaces (Theorem \ref{theo:PossibCodm1SingSS}).
Using loop group methods (see \S \ref{sec:DPW}), one can construct a spherical surface 
with an $A_3^{-}$ ({\it cuspidal beaks}) or an $A_4$-singularity ({\it cuspidal butterfly}), 
so these singularities do indeed occur on spherical surfaces (Theorem \ref{theo:PossibCodm1SingSS}).
The next question is whether the generic evolution of parallels of such  singularities can actually be realized by families of spherical surfaces. This is not automatic, see Remark \ref{rem:bifparalSph}. 
Using geometric criteria for $\mathcal R$-versality of families of functions established in \S \ref{sec:GeomCritA4} 
and the method described in \S \ref{sec:DPW}, we show that 
the $A_4$ and $A_3^{-}$ bifurcations do indeed occur in families of spherical surfaces
 (Theorem \ref{theo:evolA4} for  $A_4$-singularity and Theorem \ref{theo:evolA3-} for the 
non-transverse $A_3^{-}$-singularity).

In \S \ref{sec:constructionSS} we describe how to obtain examples of spherical surfaces 
exhibiting stable singularities and those that appear in generic families from geometric data along
a space curve. The proof of the existence of solutions and how to compute them using loop group methods
is given in \cite{spherical}.

In \S \ref{sec:SingHarmMap} we turn to the singularities of harmonic maps.
Wood \cite{wood1977} characterized geometrically the singularities of such maps. 
Let $\mathcal A$ be the Mather right-left group of pairs of germs of diffeomorphisms in the source and target.
We consider the problem of realization of finitely $\mathcal A$-determined singularities of map-germs from the plane to the plane and of their $\mathcal A_e$-versal deformations by germs of harmonic maps from the Euclidean plane to the Euclidean plane.
We settle this question for the $\rank 1$ finitely $\mathcal A$-determined singularities listed in \cite{rieger}  and for the simple $\rank 0$ 
singularities given in \cite{riegerruas}. 
 We show, for instance, that 
some singularities of harmonic maps can never be 
$\mathcal A_e$-versally unfolded by families of harmonic maps (Proposition \ref{prop:3jet(x,0)} and Remark \ref{rem:AboutSingHarmMap}). 
It is worth observing that the
singularities of map-germs from the plane to the plane arise as the singularities of projections of surfaces to planes or, more generally, of projections of complete intersections to planes. These projections are extensively studied; see, for example,  \cite{Arnold79,Gaffney,Goryunov,KoenVanDo,Platonova,rieger,Whitney}. We chose the list in \cite{rieger} as it exhibits all the  rank 1  germs of $\mathcal A$-codimension $\le 6$ and includes all the simple germs obtained in \cite{Goryunov}.


\section{Preliminaries}\label{sec:Prelm}
Let $\Omega$ be a simply connected open subset of $\C$, with holomorphic coordinates $z=x+iy$.
A smooth map $N: \Omega \to \SSS^2$ is harmonic if and only $N \times (N_{xx} + N_{yy} ) = 0$, i.e., 
\[
N \times N_{z \bar z} = 0.
\]
This condition is also the integrability condition for the equation
\beq \label{sphericalsurface}
f_z = i \, N \times N_z, \quad \quad \hbox {i.e.,} \quad f_x = N \times N_y, \quad f_y = - N \times N_x.
\eeq
 That is, $(f_z)_{\bar z}= (f_{\bar z})_z$ if and only if $N \times N_{z \bar z} = 0$.
Hence, given a harmonic map $N$, we can integrate the equation \eqref{sphericalsurface} to obtain a 
smooth map $f: \Omega \to \real^3$, unique up to a translation.  

A differentiable map $h: M\to \real^3$ from a surface into Euclidean space is called a \emph{frontal}
if there is a differentiable map $\mathcal{N}:M \to \SSS^2 \subset \real^3$ such that $\dd h$ is orthogonal to
 $\mathcal{N}$.  The map $h$ is called a \emph{wave front} (or \emph{front}) if
the Legendrian lift $(h, \mathcal{N}): M \to \real^3 \times \SSS^2$ is an immersion. 
The map $f$ defined above with Legendrian lift $L:=(f,N)$ is an example of a frontal.
From \eqref{sphericalsurface} the regularity of $f$ is equivalent to the regularity of $N$. At regular points
the first and second fundamental forms for $f$ are
\beqas
{I} &=& |N \times N_y|^2 \, \dd x^2 + 2\langle N \times N_y, - N \times N_x \rangle  \, \dd x \dd y
   + |N \times N_x|^2 \, \dd y^2, \\
{II}&=& \langle N , N_x \times N_y \rangle \, (\dd x^2 + \dd y^2).
	\eeqas
Thus the metric induced by the second fundamental form is conformal with respect to the conformal structure on
$\Omega$, and  the Gauss curvature of $f$ is a constant equal to $1$. 
Conversely, one can show that all regular spherical surfaces are obtained this way.
We call $f$ the \emph{spherical frontal} associated to 
the harmonic map $N$. 

 For spherical frontals we have the following characterization of the wave front condition:
\begin{proposition}\label{prop:SSfrontfrontal}
The map $f$ is a wave front near a point $p$ if and only if 
$\rank(\dd N)_p\ne 0$.
\end{proposition}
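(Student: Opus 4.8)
The plan is to translate the wave front condition into a purely pointwise statement about injectivity of the differential of the Legendrian lift $L=(f,N)$, and then to analyze the kernel of $\dd L$ by a short linear-algebra computation using the defining equations \eqref{sphericalsurface}. First I would observe that $f$ is a front near $p$ precisely when $L$ is an immersion on a neighborhood of $p$, i.e.\ when $\dd L_q$ is injective for every $q$ near $p$. Since $\Omega$ is $2$-dimensional, injectivity of $\dd L_q=(\dd f_q,\dd N_q)$ is equivalent to $\ker \dd f_q\cap \ker \dd N_q=0$. Thus the statement reduces to the pointwise claim that $\dd L_q$ is injective if and only if $(\dd N)_q\neq 0$.

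To analyze the kernel I would take $v=a\,\partial_x+b\,\partial_y$ with $(a,b)\neq(0,0)$, so that $\dd N_q(v)=aN_x+bN_y$, while by \eqref{sphericalsurface} one has $\dd f_q(v)=a f_x+b f_y = N\times(aN_y-bN_x)$. Since $|N|\equiv 1$, the vectors $N_x,N_y$ are tangent to $\SSS^2$ and hence orthogonal to $N$, and so is $aN_y-bN_x$; for any $w\perp N$ one has $N\times w=0$ iff $w=0$. Therefore $v\in\ker\dd L_q$ is equivalent to the pair of equations $aN_x+bN_y=0$ and $aN_y-bN_x=0$, which I would record as
\[
\begin{pmatrix} a & b \\ -b & a \end{pmatrix}\begin{pmatrix} N_x \\ N_y \end{pmatrix}=0,
\]
whose coefficient matrix has determinant $a^2+b^2>0$.

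It follows that a nonzero $v$ can lie in $\ker\dd L_q$ only when $N_x=N_y=0$ at $q$, that is only when $(\dd N)_q=0$; conversely, if $(\dd N)_q=0$ then $\dd f_q=0$ as well, so $\dd L_q$ vanishes entirely and $L$ is not immersive. This gives the pointwise equivalence. Finally, to upgrade to the statement near $p$, I would invoke lower semicontinuity of the rank: the condition $\rank(\dd N)_q\geq 1$ is open, so if it holds at $p$ it holds throughout a neighborhood, whence $L$ is an immersion there and $f$ is a front near $p$; the reverse direction is already pointwise.

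The computation is elementary, so I do not expect a genuine obstacle. The one step requiring care is the orthogonality observation that allows replacing the condition $N\times(aN_y-bN_x)=0$ by $aN_y-bN_x=0$: this is exactly what produces the determinant $a^2+b^2$ and forces injectivity of $\dd L_q$ whenever $\dd N$ is nonzero, reflecting the $90^\circ$-rotation structure built into \eqref{sphericalsurface}.
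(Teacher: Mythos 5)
Your proof is correct, and it takes a somewhat different route from the paper's. The paper argues by a three-way case analysis on $\rank(\dd N)_p$: the rank-$2$ case is immediate, the rank-$0$ case is handled by noting $\dd f_p=0$, and in the rank-$1$ case the authors write $N_y=aN_x$ (or $N_x=aN_y$), deduce $f_x=-af_y$ from \eqref{sphericalsurface}, and check directly that the two columns $(-af_y,N_x)$ and $(f_y,aN_x)$ of $\dd L$ are not proportional. You instead compute $\ker \dd L_q$ in one stroke: for $v=a\partial_x+b\partial_y$ the conditions $\dd N_q(v)=0$ and $\dd f_q(v)=0$ become, after using $|N|\equiv 1$ and the fact that $N\times w=0$ iff $w=0$ for $w\perp N$, the $2\times 2$ system with matrix $\left(\begin{smallmatrix} a & b\\ -b & a\end{smallmatrix}\right)$ acting on $(N_x,N_y)$, whose determinant $a^2+b^2>0$ forces $N_x=N_y=0$. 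This is the same underlying mechanism --- the impossibility in the paper's non-proportionality check reduces to $-a^2=1$, i.e.\ to the positivity of $a^2+b^2$ --- but your packaging avoids the case split and the sub-cases $N_y=aN_x$ versus $N_x=aN_y$, and makes the $90^\circ$-rotation structure of \eqref{sphericalsurface} explicit. You are also slightly more careful than the paper about the word ``near'': you invoke openness of the condition $\rank(\dd N)\geq 1$ to pass from the pointwise statement to an immersion on a neighbourhood, a step the paper leaves implicit. Both proofs are correct and elementary; yours is arguably the cleaner formulation of the same idea.
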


\begin{proof}
If $\rank(\dd N)_p =2$ then clearly $L=(f,N)$ is an immersion at $p$ so 
$f$ is a wave front at $p$.  
	
Suppose that $\rank(\dd N)_p =1$. We can write
$N_y= a N_x$ or $N_x = a N_y$ for some real scalar $a$.  In the first case,
from \eqref{sphericalsurface}  we have
$f_x = a N \times N_x = -a f_y$, so 
\[
\dd L = ( \dd f, \dd N) = (-a f_y, N_x) \dd x + (f_y, a N_x) \dd y,
\]
and this map has rank $2$ 
as $(-a f_y,  N_x) $ and $(f_y, a N_x)$ are not proportional.
Similarly, $N_x = aN_y$ also leads to $L$ having rank $2$. 
Therefore, $f$ is a wave front at $p$.

If $\rank(\dd N)_p =0$, then $\rank(\dd f)_p =0$ so 
$(f,N)$ is not an immersion at $p$. Consequently, $f$ is not a 
wave front at $p$ (it is only a frontal at $p$).
\end{proof}

If fact, when $\rank (\dd N )_p\ne 0$ we can say more about $f$.

\begin{proposition}\label{prop:SSasparallel}
Suppose that $\rank (\dd N )_p\ne 0$. Then the spherical surface $f$ is locally a parallel of a constant mean curvature surface.
\end{proposition}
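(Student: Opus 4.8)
The plan is to realize $f$ as a parallel of the surface $x := f + tN$ obtained by displacing $f$ by unit distance along $N$, and to show that for a suitable sign $t \in \{1,-1\}$ the map $x$ is a regular surface of constant mean curvature near $p$. First I would record that every parallel $f^t := f + tN$ with $t$ constant has $N$ as a (frontal) normal: since $N$ is a unit vector, $N_x, N_y \perp N$, and by \eqref{sphericalsurface} also $f_x, f_y \perp N$, so $f^t_x, f^t_y \perp N$. Thus all these maps share the normal direction $N$, which is precisely the defining feature of a family of parallel surfaces.

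At points where $\rank(\dd N)_p = 2$ the map $f$ is a regular spherical surface and the classical Bonnet computation applies. Writing $S$ for its shape operator (so $\dd N = -S\,\dd f$) and using $\det S = K = 1$, the parallel $f^t$ has shape operator $S(I - tS)^{-1}$ and mean curvature $H^t = (H-t)/(1 - 2tH + t^2)$, where $H = \tfrac12\trace S$. Setting $t = \pm 1$ collapses this to the constant $H^{\pm 1} = \mp\tfrac12$, so at regular points $f$ is already a parallel of a CMC surface, with natural candidate $x = f + tN$.

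The real work is at the singular points of $f$, i.e. where $\rank(\dd N)_p = 1$; there the parallel must be shown to be a \emph{regular} surface that absorbs the singularity. Using $N_y = a N_x$ (the case $N_x = aN_y$ being symmetric) and the orthogonal, equal-length frame $u = N_x$, $v = N \times N_x$ of $T_N\SSS^2$, a direct computation from \eqref{sphericalsurface} gives $x_x = av + tu$ and $x_y = -v + ta\,u$, whence $x_x \times x_y = -t(1 + a^2)|N_x|^2\,N \neq 0$ for $t = \pm 1$. Hence $x$ is an immersion at $p$ with unit normal $\pm N$, and by openness of the immersion condition it is regular on a neighborhood $U$ of $p$; in the rank-$2$ case one notes that $\det(I - tS) = 1 - 2tH + t^2$ cannot vanish for both $t = 1$ and $t = -1$, so at least one sign makes $f^t$ regular at $p$, and we fix that $t$. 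Finally, on $U$ the Gauss map of $x$ equals $\pm N$, which is harmonic; by the Ruh--Vilms theorem $x$ has constant mean curvature, and $f = x - tN$ is the desired local parallel.

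I expect the main obstacle to be exactly this singular-point analysis: away from $\rank(\dd N) = 2$ the surface $f$ degenerates, the Bonnet curvature formula is unavailable, and one cannot simply extend $H^t$ by continuity when the rank-$1$ locus is large. The device that resolves it is the explicit regularity computation above, which shows that the parallel at unit distance is immersed precisely where $f$ fails to be, combined with the Ruh--Vilms characterization of CMC surfaces through harmonicity of the Gauss map; this certifies constant mean curvature on all of $U$ without any reference to the principal curvatures of $f$.
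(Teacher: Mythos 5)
Your proposal is correct and follows essentially the same route as the paper: both take $g=f\pm N$, show by a direct cross-product computation that at least one sign yields an immersion near $p$, and conclude constant mean curvature from the harmonicity of the Gauss map $N$ (Ruh--Vilms). The paper handles the rank-$1$ and rank-$2$ cases in a single formula for $g_x\times g_y$ rather than splitting into cases and invoking the Bonnet parallel-surface formula, but the substance is identical.
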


\begin{proof}
At least one of the maps $g=f \pm N$ parameterizes locally  a
smooth and regular surface in $\mathbb R^3$. 
Indeed, consider $g=f+ \delta N$, where $\delta = \pm 1$. 
Then
$g_x=N\times N_y+\delta N_x$ and $g_y=-N\times N_x+\delta N_y$, so
$$
g_x\times g_y=-\left(\delta (|N_x|^2+  |N_y|^2)+2\varepsilon |N_x| |N_y|\sin \theta\right)N
$$
where $0\le \theta\le \pi$ is the angle between the vectors $N_x$ and $N_y$
and $\varepsilon = \sign(\det(N_y,N_x,N))$. It follows that  
$|g_x\times g_y|\ne 0$ for at least one of $\delta = \pm 1$.
The regular surface parametrized by $g$ has constant mean curvature as its
  Gauss map $N$ is harmonic, and the result follows.
\end{proof}

\begin{remark} \label{rem:bifparalSph} 
The parallels of a surface $g$ with Gauss map $N$ are given by 
$g+rN$, $r\in \mathbb R$. 
If $g$ has constant mean curvature $H_0$, then 
the parallel $g+r_0N$, with $r_0=1/(2H_0)$, has constant Gauss curvature $K=4H_0^2$ 
(see, e.g., \cite{docarmo1}; 
we took $K=1$ in the proof of Proposition \ref{prop:SSasparallel}.)
Therefore, a spherical surface is a specific parallel of a CMC surface.
This means, in particular, that 
codimension 1 phenomena that appear in the parallels of a generic surface in $\mathbb R^3$ 
by varying $r$ do not appear generically for a single spherical surface. For them to possibly occur, one has to consider 
1-parameter families of spherical surfaces.
\end{remark}

\subsection{The generalized Weierstrass representation for spherical surfaces (DPW)}\label{sec:DPW}
The method of Dorfmeister, Pedit and Wu (DPW) \cite{DorPW} gives a representation
of harmonic maps into symmetric spaces in terms of essentially arbitrary 
holomorphic functions via  a loop group decomposition.
We refer the reader to \cite{spherical} for a description of the method as it applies to
spherical surfaces. In brief,  a \emph{holomorphic potential} on an open set $U \subset \C$,
is a $1$-form
\[
\omega = \sum_{n=0}^\infty \bbar a_n (z) \lambda^{2n} & b_n (z) \lambda^{2n-1} \\
                  c_n (z)\lambda^{2n-1} & -a_n(z) \lambda^{2n} \ebar \dd z
                   = A(z) \dd z,
\]
where all component functions are holomorphic in $z$ on $U$, and with a suitable convergence
condition with respect to the auxiliary complex loop parameter $\lambda$. Such a potential can be used to produce a 
harmonic map  $N: U \to \SSS^2$ and a spherical surface $f: U \to \real^3$
that has $N$ as its Gauss map.  The solution can also be computed numerically.
Conversely, all harmonic maps and spherical surfaces
can be produced this way. 

The singularities of the harmonic
map $N$ are closely related to the lowest order (in $\lambda$) terms of the potential,
namely the pair of functions $\psi(z)= (b_0(z), c_0(z))$.
To produce $N$ from $\omega$,
one first solves the differential equation $\Phi_z = \Phi A$, with $\Phi(z_0)=I$,
then a loop group frame $\hat F$ is obtained by the (pointwise in $z$) Iwasawa decomposition (see \cite{PreS}) $\Phi(z) = \hat F(z) B(z)$ where $\hat F (z,\lambda) \in SU(2)$ for all
$\lambda \in \SSS^1$, and $B(z,\lambda)$ extends holomorphically in $\lambda$ 
to the whole unit disc. Evaluating $\hat F$ at $\lambda=1$ gives an $SU(2)$-frame $F(x,y)=\hat F(z, 1)$, (where $z=x+iy$),
and the harmonic map is given by
\[
N=\Ad_F e_3, \quad \quad e_3 = \frac{1}{2}\bbar i & 0 \\ 0 & -i \ebar \in 
\mathfrak{su}(2)=\real^3.
\]
Note that $N$ takes values in $\SSS^2$ because we use the metric
$\langle X, Y \rangle = -2 \, \hbox{trace}(XY)$, with respect to which $e_3$ is a 
unit vector. 
 A critical fact in the DPW method is that the 
map $\Phi \mapsto (\hat F, B)$ in the Iwasawa decomposition (with a suitable normalization to make the factors unique) 
 gives a real analytic diffeomorphism from the complexified loop group
 $\Lambda SL(2,\C)$ to a product of Banach Lie groups \cite{PreS}.  Using this, it is
 straightforward to verify the following:
 \begin{lemma} \label{dpwlemma1}
Let $N: U \to \SSS^2$ be a harmonic map produced by the DPW method
with potential $\omega = A(z) \dd z$.  Then the $k$-jet of $N$ is uniquely
determined by the $(k-1)$-jet of $A$.
 \end{lemma}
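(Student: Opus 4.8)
The plan is to follow the chain of constructions in the DPW recipe, $A \to \Phi \to \hat F \to F \to N$, and track how the jet order propagates at each stage. The key observation is that only the first step, solving the ODE $\Phi_z = \Phi A$, lowers the jet order (by exactly one), while every subsequent step preserves it. Throughout, "$j$-jet of $A$" refers to the jet in the variable $z$ (with $\lambda$ carried along as a parameter), and "$k$-jet of $N$" to the jet in the real coordinates $(x,y)$.

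First I would treat the ODE. Since $A$ is holomorphic in $z$, the solution $\Phi$ of $\Phi_z = \Phi A$ with $\Phi(z_0) = I$ is holomorphic in $z$; in particular its $k$-jet in $(x,y)$ is determined by, and equivalent to, its holomorphic $k$-jet $\{\partial_z^n \Phi(z_0)\}_{n \le k}$, because all $\bar z$-derivatives vanish. Differentiating the ODE and using the Leibniz rule gives $\partial_z^n \Phi = \sum_{j=0}^{n-1} \binom{n-1}{j} (\partial_z^{n-1-j}\Phi)(\partial_z^j A)$. By induction on $n$ it then follows that $\partial_z^n \Phi(z_0)$ is a universal polynomial expression in the values $\partial_z^j A(z_0)$ for $0 \le j \le n-1$. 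Hence the $k$-jet of $\Phi$ is uniquely determined by the $(k-1)$-jet of $A$.

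Next I would argue that the remaining steps do not alter the jet order. By the critical fact cited just above the lemma, the Iwasawa splitting $\Phi \mapsto (\hat F, B)$ is a real analytic diffeomorphism of (Banach) loop groups; composing the real analytic map $z \mapsto \Phi(z)$ with it and applying the chain rule (the Fa\`a di Bruno formula) shows that the $k$-jet of $\hat F$ at $z_0$ depends only on the $k$-jet of $\Phi$ at $z_0$. The passage $F = \hat F|_{\lambda = 1}$ is evaluation at a fixed loop parameter, which commutes with $(x,y)$-derivatives, and $N = \Ad_F e_3$ is a fixed algebraic, hence smooth, function of $F$; both operations send the $k$-jet of their input to the $k$-jet of their output. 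Concatenating, the $k$-jet of $N$ is determined by the $k$-jet of $\Phi$, and therefore by the $(k-1)$-jet of $A$.

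The only genuinely nontrivial ingredient is the smoothness (indeed real analyticity) of the Iwasawa decomposition, which is precisely the fact the authors single out and which makes the chain-rule bookkeeping go through. The main thing to be careful about is that this composition argument runs in the infinite-dimensional loop-group setting, so one must invoke that $k$-jets are preserved under composition with a smooth map of Banach manifolds; once this is granted, the statement reduces to the routine jet count carried out above, with the single drop from $k$ to $k-1$ occurring at the integration step.
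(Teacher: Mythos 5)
Your proposal is correct and follows exactly the route the paper intends: the paper gives no written proof, stating only that the lemma is ``straightforward to verify'' from the real analyticity of the Iwasawa decomposition, and your argument (jet drop by one at the integration step $\Phi_z=\Phi A$ with $\Phi(z_0)=I$, followed by jet preservation under the real analytic Iwasawa splitting, evaluation at $\lambda=1$, and $\Ad_F e_3$) is precisely the bookkeeping being alluded to. Your explicit attention to the Banach-manifold chain rule for the loop-group step is the right point to flag as the only nontrivial ingredient.
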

Moreover, the rank of the map $N$ at the integration point $z_0$ is determined
just by the 
holomorphic functions $b_0$ and $c_0$ in the potential $\omega$:
Note $\hat F = \Phi B^{-1}$, where $B$ is holomorphic in $\lambda$ on $\D$,
and we can write
 $B(z,\lambda) = \bbar \rho(z) & 0 \\ 0 & 1/\rho(z) \ebar + o(\lambda)$,
with  $\rho$ real analytic, positive real-valued, and $\rho(z_0)=1$.
It follows that
\[
\hat F^{-1} \dd \hat F = \bbar 0 & \rho^2(z) b_0(z) \\ 
                                       \rho^{-2}(z) c_0(z) & 0 \ebar 
                   \lambda^{-1} \dd z + \hbox{higher order in } \lambda.
\]
  If we write
 $F^{-1}F_z = U_{\mathfrak{k}}+ U_{\mathfrak{p}}$, where $U_\mathfrak{k}$ is parallel to $e_3$ and $U_\mathfrak{p}$ is perpendicular,
then, as usual in loop group constructions for harmonic maps, one finds that
$\hat F^{-1} \dd \hat F = U_{\mathfrak{p}} \lambda^{-1} \dd z + \hbox{higher order in } \lambda$. Hence
\beqas
N_z= \Ad_F([F^{-1} F_z, e_3]) &=& \Ad_F \left[
                            \bbar 0 & \rho^2 b_0 \\ 
                                       \rho^{-2} c_0 & 0 \ebar , e_3 \right] \\
                              &=& \Ad_F \left (-(\rho^2 b_0 + \rho^{-2} c_0) i e_2 
                                + (\rho^2 b_0 - \rho^{-2} c_0) e_1 \right).
\eeqas
It follows in a straightforward manner that $\rank(\dd N) <2$ if and only if 
$|\rho^2 b_0| = |\rho^{-2}c_0|$ and $\rank (\dd N) = 0$ if and only if $b_0 = c_0=0$.
In particular, since $\rho(z_0)=1$ we have:
\begin{lemma}  \label{dpwlemma2}
Let $N: U \to \SSS^2$ be a harmonic map produced by the DPW method
with holomorphic potential $\omega = A(z) \dd z$, integration point $z_0$, and notation
as above. Then $N$ fails to be immersed at $z_0$ if and only if $|b_0(z_0)|=|c_0(z_0)|$.
Additionally, $N$ has rank zero at any point $z \in U$ if and only if $b_0(z)= c_0(z) = 0$.
\end{lemma}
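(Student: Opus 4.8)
The plan is to start directly from the expression for $N_z$ derived just above the statement, since that computation already reduces everything to the two complex coefficients
\[
\Ad_F^{-1} N_z = (\rho^2 b_0 - \rho^{-2} c_0)\, e_1 - i(\rho^2 b_0 + \rho^{-2} c_0)\, e_2.
\]
Writing $P = \rho^2 b_0$ and $Q = \rho^{-2} c_0$ for brevity, and using that $N$ takes values in the real space $\mathfrak{su}(2) = \R^3$ while $\Ad_F$ is a genuine (real) rotation, I would first record that $N_{\bar z} = \overline{N_z} = \Ad_F\!\left((\bar P - \bar Q)e_1 + i(\bar P + \bar Q)e_2\right)$. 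This is the only place the reality of $N$ enters, and it is what lets the two holomorphic functions $b_0,c_0$ control the real rank.

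Next I would pass from the $z$-derivatives to the real tangent vectors via $N_x = N_z + N_{\bar z}$ and $N_y = i(N_z - N_{\bar z})$. Because $N = \Ad_F e_3$, both $N_x$ and $N_y$ lie in the tangent $2$-plane spanned by the orthonormal pair $\{\Ad_F e_1, \Ad_F e_2\}$, in which their coordinate vectors come out as $(2\Re(P-Q),\, 2\Im(P+Q))$ and $(-2\Im(P-Q),\, 2\Re(P+Q))$ respectively. Since $\rank(\dd N) < 2$ exactly when $N_x$ and $N_y$ are linearly dependent, the criterion is the vanishing of the $2\times 2$ determinant of these coordinate vectors.

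The one computation worth doing carefully — the step the text calls ``straightforward'' — is that this determinant equals, up to the positive factor $4$, the quantity $\Re\!\left[(P-Q)\overline{(P+Q)}\right] = |P|^2 - |Q|^2$, the cross terms $P\bar Q - \bar P Q$ being purely imaginary and dropping out. Hence $\rank(\dd N) < 2$ iff $|P| = |Q|$, i.e. iff $|\rho^2 b_0| = |\rho^{-2} c_0|$. Evaluating at the integration point and invoking the normalization $\rho(z_0) = 1$ yields the first claim, $|b_0(z_0)| = |c_0(z_0)|$.

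For the rank-zero statement I would argue separately, since it is asserted at an \emph{arbitrary} point and so cannot use $\rho = 1$. Here $\dd N = 0$ iff $N_x = N_y = 0$ iff $N_z = 0$ (again via $N_{\bar z} = \overline{N_z}$), and injectivity of $\Ad_F$ together with the linear independence of $e_1, e_2$ forces $P - Q = 0$ and $P + Q = 0$, hence $P = Q = 0$. As $\rho(z) > 0$ at every point, this is equivalent to $b_0(z) = c_0(z) = 0$ with no normalization needed, which is exactly why this half is valid on all of $U$. The main (mild) obstacle is just the bookkeeping in the determinant cancellation; everything else is forced by the structure already in place.
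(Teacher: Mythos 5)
Your proposal is correct and follows exactly the route the paper takes: it starts from the displayed formula for $N_z$ in terms of $\rho^2 b_0$ and $\rho^{-2}c_0$, and the determinant computation you carry out (giving $4(|\rho^2 b_0|^2-|\rho^{-2}c_0|^2)$) is precisely the step the paper dismisses as following ``in a straightforward manner,'' with the normalization $\rho(z_0)=1$ used only for the first claim and the positivity of $\rho$ for the second. Your explicit treatment of the reality of $N$ via $N_{\bar z}=\overline{N_z}$ and the separation of the rank-zero case at arbitrary points are welcome details, but the argument is the same as the paper's.
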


\section{Singularities of spherical surfaces}\label{sec:singSephSurf}

We are interested here in the stable singularities of 
spherical surfaces $f$ as well as those that occur generically
in 1-parameter families of such surfaces. This excludes 
the case when $\rank(\dd N)_p=0$, 
as can be deduced from Lemmas \ref{dpwlemma1} and
\ref{dpwlemma2} above together with a transversality argument.
Therefore, following Propositions \ref{prop:SSfrontfrontal} and 
Proposition \ref{prop:SSasparallel}, for the study of codimension $\le 1$ phenomena
we can consider spherical surfaces as parallels of CMC surfaces. Observe that in this case $\rank (\dd N)_p$ is never zero.

Singularities of parallels of a general surface $g:\Omega\to \mathbb R^3$  
are studied by Bruce in \cite{bruceParallel} (see also \cite{fukuihasegawa}).
Bruce considered the family of distance squared functions $F_{t_0}:\Omega\times \mathbb R^3\to \mathbb R$ 
given by $F_{t_0}((x,y),q)=|g(x,y)-q|^2-t_0^2$. 
A parallel $W_{t_0}$ of $g$ is 
the discriminant of $F_{t_0}$, that is,
$$
W_{t_0}=\{ q\in \mathbb R^3:\exists (x,y)\in \Omega 
\,\,\mbox{\rm where}\,\, 
F_{t_0}((x,y),q)=\frac{\partial F_{t_0}}{\partial x}((x,y),q)=\frac{\partial F_{t_0}}{\partial y}((x,y),q)=0 
\} .
$$

For $q_0$ fixed, the function $F_{q_0,t_0}(x,y)=F_{t_0}(x,y,q_0)$ 
gives a germ of a function at a point on the surface. Varying
$q$ and $t$ gives a $4$-parameter family of functions $F$. 
Let $\mathcal R$ denote the group of germs of diffeomorphisms from the plane to the plane.
Then, by a transversality theorem in \cite{looijenga}, for a generic surface, the possible singularities of $F_{q_0,t_0}$ 
are those of $\mathcal R$-codimension 4, and these are as follows (with  $\mathcal R$-models, up to a sign, in brackets): 
$A_1^{\pm}$ ($x^2\pm y^2$), $A_2$ ($x^2+ y^3$), 
$A_3^{\pm}$ ($x^2\pm y^4$), $A_4$ ($x^2+ y^5$) and $D_4^{\pm}$ ($y^3\pm x^2y$).  

Bruce showed that $F$ is always an $\mathcal R$-versal family 
of the $A_1^{\pm}$ and $A_2$ singularities. Consequently the parallels at such singularities are, respectively, regular surfaces or cuspidal edges. It is also shown in \cite{bruceParallel} that the transitions at an $A_2$ do not occur on parallel surfaces.

At an $A_3$-singularity one needs to consider the discriminant 
$\Delta$
of the extended family of distance squared functions  
$F:\Omega\times \mathbb R^3\times \mathbb R\to \mathbb R$, with
$F((x,y),q,t)=|g(x,y)-q|^2-t^2$, with $t$ varying near $t_0$.
The parallels $W_t$ are pre-images of the projection 
$\pi:\Delta\to \mathbb R$ to the $t$-parameter. If 
$\pi$ is transverse to the $A_3$-stratum in $\Delta$, then 
the parallels are swallowtails near $t_0$. 
If $\pi$ is not transverse to the $A_3$-stratum (we denote this case 
non-transverse $A_3^{\pm}$), the 
projection $\pi$ restricted to the $A_3$-stratum is in general a Morse function and the parallels undergo 
the transitions in the first two columns in Figure \ref{fig:familiesSS} 
(cuspidal lips or cuspidal beaks). When the function $F_{q_0,t_0}$ has an $A_4$ or  $D_4^{\pm}$-singularity and the projection $\pi$ is generic in Arnold sense \cite{ArnoldWavefront}, the transitions in the parallels 
are as in Figure \ref{fig:familiesSS}.

It is worth making an observation about the singular set of a given parallel $W_t$. The parallel is given by $f=g+tN$. Take the parameterization $g$, away from umbilic points,
in such a way that the coordinates curves are the lines of principal curvatures. Then $N_x=-\kappa_1g_x$, $N_y=-\kappa_2g_y$, so
$f_x=(1-\kappa_1t)g_x$ and $f_y=(1-\kappa_2t)g_y$. 
If the point on the surface is not parabolic, 
the parallel $W_{t}$ is singular if and only if $t={1}/{\kappa_1}$ or 
$t={1}/{\kappa_2}$. Therefore, the singular set of the parallel 
$W_{t}$ is (the image of) the curve on the surface $M$ given by ${\kappa_1}=1/t$ 
or ${\kappa_2}=1/t$. That is, the singular sets of parallels
correspond to the curves on the surface where the principal curvatures
are constant.

If the point $p$ is a parabolic point,  with say $\kappa_1(p)=0$ but $\kappa_2(p)\ne 0$, then the parallel associated to $\kappa_1$ goes to infinity and the other is singular 
along the curve $\kappa_2=1/t$. In this case, $\ker(\dd f)_p$ is parallel to the principal direction $g_y(p)$ and $\ker(\dd N)_p$ 
is parallel to the other principal direction $g_x(p)$. Observe that for generic surfaces, the parabolic curve $\kappa_1=0$ and 
the singular set 
$\kappa_2=1/t$ of the parallel $W_{t}$ 
are transverse curves at generic points on the parabolic curve.

\smallskip
We turn now to spherical surfaces $f$ and use the notation in \S \ref{sec:Prelm}. 
Since $N$ takes values in $\SSS^2$, we have $\langle f_z, N \rangle =0$ and hence it follows from 
equation \eqref{sphericalsurface} that the ranks of $f$ and $N$ coincide at every point, i.e., $
\rank (\dd f)_p = \rank (\dd N)_p.$ 
Hence, the singular set of $f$ is the same as the singular set of $N$. 
 Another way to see this is as follows. As $f$ is a parallel of a 
CMC surface, the principal curvature $\kappa_2$ is constant 
on the parabolic set $\kappa_1=0$ (or vice-versa). 
Therefore, the singular set $\kappa_2=\kappa_2(p)$ of $f$ 
coincides with the parabolic set which is the singular set of $N$.
In particular, the singularities of parallels of CMC surfaces occur only on its parabolic set. 
Observe that on the parabolic set 
$\ker(\dd f)_p$ and $\ker(\dd N)_p$ are orthogonal and 
coincide with the principal directions of the CMC surface at the point $p$.
 
We are concerned with singularities of spherical surfaces and their 
deformations within the set of such surfaces. 
We start by determining which of the singularities of a general parallel surface can occur on a spherical surface.

\begin{theorem}\label{theo:PossibCodm1SingSS}
{\rm (1)} The non-transverse $A_3^+$ and the $D_4^{\pm }$-singularities 
do not occur on spherical surfaces.

{\rm (2)} The singularities $A_2$ (cuspidal edge), $A_3$ (swallowtail), $A_4$ (butterfly) and the 
non-transverse $A_3^{-}$ (cuspidal beaks) can occur on spherical surfaces; see Figure \ref{fig:StableSS} and Figure \ref{fig:familiesSS}.
\end{theorem}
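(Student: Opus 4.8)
The plan is to treat the two parts by opposite methods: part~(1) is an \emph{impossibility} statement, proved by geometric obstruction, while part~(2) is an \emph{existence} statement, proved by explicit construction via the DPW method of \S\ref{sec:DPW}. For part~(1) I would first exploit the reductions already in place: by Proposition~\ref{prop:SSasparallel} and the discussion preceding the theorem, every singularity of $f$ of codimension $\le 1$ occurs on the parabolic set of the underlying CMC surface $g$, where the principal curvatures are $\{\kappa_1,\kappa_2\}=\{2H_0,0\}$ with $2H_0=\pm 1\neq 0$. In particular $\rank(\dd f)_p=\rank(\dd N)_p=1$ there, and the null directions of $\dd f$ and $\dd N$ are the two orthogonal principal directions.

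For the $D_4^{\pm}$ case I would argue that a $D_4$ of the distance squared function $F_{q_0,t_0}$ forces $F_{q_0}$ to have corank $2$ at $p$; since its Hessian on the surface is, in principal coordinates, proportional to $\mathrm{diag}(1-\kappa_1 t_0,\,1-\kappa_2 t_0)$, corank $2$ is equivalent to $\kappa_1=\kappa_2=1/t_0$, i.e.\ to $p$ being an umbilic (equivalently $\rank(\dd f)_p=0$). On the parabolic set of $g$ the principal curvatures $2H_0$ and $0$ are distinct, so there are no umbilics and $F_{q_0}$ has corank exactly $1$; hence $D_4^{\pm}$ cannot occur.

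The delicate case, and the main obstacle, is ruling out the non-transverse $A_3^{+}$ (cuspidal lips). I would translate the lips/beaks dichotomy into the standard front criterion for $f$: at such a point the identifier $\Lambda=\det(f_x,f_y,N)$ satisfies $\dd\Lambda_p=0$, and the singularity is cuspidal lips when $\mathrm{Hess}(\Lambda)_p$ is definite and cuspidal beaks when it is indefinite. On the component $\kappa_1=2H_0$ one has $\Lambda\propto(1-\kappa_1 r_0)$ with $r_0=1/(2H_0)$, and a short Leibniz computation (using $\dd\kappa_1|_p=0$, which is exactly $\dd\Lambda_p=0$) gives $\mathrm{Hess}(\Lambda)_p\propto-\mathrm{Hess}(\kappa_1)_p\propto\mathrm{Hess}(K_g)_p=-\mathrm{Hess}(\phi)_p$, where $\phi:=H_0^2-K_g=\tfrac14(\kappa_1-\kappa_2)^2$ and all constants are nonzero. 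Thus it suffices to show $\mathrm{Hess}(\phi)_p$ is never definite. Here I would invoke the CMC structure: in conformal coordinates for $g$ one has $\phi=4|Q|^2e^{-4u}$, where $Q$ is the Hopf differential and $e^{2u}$ the conformal factor. Since $g$ has constant mean curvature, $Q$ is holomorphic, so $\log|Q|$ is harmonic away from umbilics, and the Gauss equation reads $\Delta u=-K_g\,e^{2u}$. Taking the flat Laplacian of $\log\phi=\mathrm{const}+2\log|Q|-4u$ yields $\Delta\log\phi=4K_g\,e^{2u}$, which vanishes at $p$ because $p$ is parabolic, $K_g(p)=0$. Since $\nabla\phi(p)=0$ and $\phi(p)=H_0^2\neq 0$, this forces $\Delta\phi(p)=0$: the Hessian of $\phi$ is trace-free in these conformal coordinates, hence has eigenvalues $\pm\mu$ and is indefinite (it is nonzero at an honest $A_3$). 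Therefore only cuspidal beaks ($A_3^{-}$) can occur, never cuspidal lips ($A_3^{+}$).

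For part~(2) I would produce each singularity explicitly by the DPW method. By Lemmas~\ref{dpwlemma1} and~\ref{dpwlemma2} the $k$-jet of $N$, and hence the $\mathcal A$-type of the front $f$, is governed by the jet of the potential $\omega$, with the singular set controlled by $b_0,c_0$; so I would choose holomorphic potentials realizing, in turn, a transverse singular curve with transverse null direction ($A_2$, cuspidal edge), a first-order tangency of the null direction to the singular set ($A_3$, swallowtail), the next tangency ($A_4$, butterfly), and a degenerate identifier with indefinite Hessian ($A_3^{-}$, cuspidal beaks), then check each case against the singular-point criteria. Existence and computability of the resulting surfaces are guaranteed by \cite{spherical}, and the full versal bifurcations are deferred to Theorems~\ref{theo:evolA4} and~\ref{theo:evolA3-}. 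I expect the genuine difficulty to lie entirely in the $A_3^{+}$ exclusion above: the $D_4$ case and the constructions of part~(2) are routine once the framework of \S\ref{sec:Prelm}--\S\ref{sec:DPW} is in place, whereas the lips/beaks sign is forced only by the interplay of holomorphicity of the Hopf differential with the Gauss equation.
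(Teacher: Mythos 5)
Your proposal is correct, and for the heart of part (1) it takes a genuinely different route from the paper. The $D_4^{\pm}$ exclusion and all of part (2) coincide in substance with the paper's proof: the paper also reduces $D_4$ to the impossibility of $\rank(\dd f)_p=0$ (phrasing it via $\rank \dd f=\rank \dd N$ together with the front condition forcing $\rank (\dd N)_p=2$, while you phrase it via the absence of umbilics on the parabolic set of the CMC surface --- equivalent observations), and part (2) is likewise handled by recognition criteria for fronts plus explicit DPW/geometric-Cauchy constructions deferred to Section \ref{sec:constructionSS}. The real divergence is the exclusion of cuspidal lips. The paper's argument is one line: at a non-transverse $A_3^{+}$ the singular set of $f$ (which equals the singular set of the harmonic Gauss map $N$) would be an isolated point (an $A_1^{+}$ Morse singularity), and Wood's Theorem \ref{theo:wood} forbids isolated singular points of harmonic maps at rank-one points. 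You instead prove the needed fact from scratch: writing $H_0^2-K_g$ as a positive multiple of $|Q|^2e^{-4u}$ with $Q$ the holomorphic Hopf differential, and combining with the Gauss equation $\Delta u=-K_g e^{2u}$, you show the flat Hessian of the singularity identifier is trace-free at the degenerate point, hence never definite. Your computation is sound (the sign of $\det \mathrm{Hess}$ at a critical point is a diffeomorphism invariant, so checking trace-freeness in the CMC-conformal chart suffices even though it differs from the chart in which the front criteria are stated), and it amounts to a self-contained proof of exactly the special case of Wood's theorem that the paper cites. What you lose is brevity; what you gain is an explicit quantitative reason --- holomorphicity of $Q$ together with $K_g(p)=0$ --- for why only the beaks sign can occur.
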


\begin{proof}

(1) The non-transverse $A_3^{+}$-singularity occurs when 
$\rank(\dd N)_p=1$ and the parabolic set has a Morse singularity $A_1^{+}$ (see Theorem \ref{theo:recogCuspidalLipsBeaks} for details). 
As the parabolic set is the singular set of the harmonic map $N$, it follows by Wood's Theorem \ref{theo:wood} that the singularity 
$A_1^+$ cannot occur for such maps. Therefore, the non-transverse $A_3^{+}$-singularity 
cannot occur for spherical surfaces. 
A $D_4^{\pm}$-singularity of a wave-front at $p$ has the
property that $\dd f_p$ vanishes. It follows from $\rank( \dd (f,N))_p=2$,
that $\dd N_p$ has rank $2$. This cannot happen for a spherical surface, because 
$\rank \dd f = \rank \dd N$.

(2) Here we first appeal to recognition criteria of singularities of wave fronts, some of which 
can be found in \cite{arnoldetal} in terms of Boardman classes. 
These criteria are expressed geometrically in \cite{izumiyasaji,izusajitakashi,suy}.
Denote by $\Sigma$ the singular set of $f$. Then the recognition criteria for wave fronts are as follows:

Cuspidal edge ($A_2$): $\Sigma$ is a regular curve at $p$; $\ker(\dd f)_p$ is transverse to $\Sigma$ at $p$ (\cite{suy}).

Swallowtail ($A_3$): $\Sigma$ is a regular curve at $p$;
$\ker(\dd f)_p$ has a second order tangency with $\Sigma$ at $p$ (\cite{suy}).

Butterfly ($A_4$): $\Sigma$ is a regular curve at $p$;
$\ker(\dd f)_p$ has a third order tangency with $\Sigma$ at $p$ (\cite{izumiyasaji}).

Cuspidal beaks (non-transverse $A_3^-$): $\Sigma$ has a Morse singularity 
at $p$; $\ker(\dd f)_p$ is transverse to the branches of $\Sigma$ at $p$ (\cite{izsata}).

We can now use the geometric Cauchy problem to construct spherical surfaces 
with the above singularities as was done in \cite{spherical}; see \S \ref{sec:constructionSS} for details. 
\end{proof}

\begin{remark}\label{rem:d2familiesCMC}
A spherical surface $f$, which we take as a parallel of a CMC $g$,  is the discriminant of the family of distance squared 
function $F_{t_0}((x,y),q)=|g(x,y)-q|^2-(1/(2H_0))^2$. 
Here, we do not have the freedom to vary $t$ as in the case for general surfaces. 
To obtain the generic deformations of wave fronts at an $A_4$ or at non-transverse $A_3^{-}$-singularity,  
we need to deform the CMC surface
in 1-parameter families $g_s$ of such surfaces in order to obtain 
a 4-parameter family $F((x,y),q,s)=|g_s(x,y)-q|^2-(1/(2H_s))^2$.
For the evolution of wave fronts at an $A_4$ (resp. non-transverse $A_3^{-}$) to be realized by spherical surfaces it is necessary
that we find a family $g_s$ of CMC surfaces such that the family $F$ is an $\mathcal R$-versal deformation of the $A_4$-singularity (resp. non-transverse $A_3^{-}$) of $F_{t_0}$. 
We establish in \S \ref{sec:GeomCritA4} geometric criteria for the family $F$
to be an $\mathcal R$-versal deformation at the above singularities 
and for the sections of the 
discriminant of $F$ along the parameter $s$ to be generic in Arnold sense \cite{ArnoldWavefront}. 
We  then use in \S \ref{sec:constructionSS} the DPW-method to construct families of spherical surfaces with the desired properties.
\end{remark}

\begin{remark}\label{rem:singSSandSinGaussmap}
The singularities of the Gauss map $N$ can be identified using 
geometric criteria involving the singular set of $N$ (i.e., the parabolic set) and $\ker(\dd N)_p$ (\cite{KabataR2R2,SajiR2R2}). 
As we observed above, 
$\ker(\dd f)_p$ and $\ker(\dd N)_p$ are orthogonal when 
the parabolic set is a regular curve, so in this case the singularities 
of $f$ and $N$ are not related. 
However, we can assert that, generically, 
a spherical surface $f$ has a cuspidal beaks singularity if and only if the 
Gauss map $N$ has a beaks singularity. The genericity condition being 
that neither  $\ker(\dd f)_p$ nor $\ker(\dd N)_p$ is tangent to the branches
of the parabolic curve.
\end{remark}


\section{Geometric criteria for $\mathcal R$-versal deformations}\label{sec:GeomCritA4}
Let now $g_s$ be any 1-parameter family of regular surfaces in $\mathbb R^3$, and 
let 
\begin{equation}\label{eq:GeneralF}
F((x,y),q,s)=|g_s(x,y)-q|^2-r(s)
\end{equation}
be a germ of a family of distance squared function on $g_s$, where 
$r$ is a smooth function (see Remark \ref{rem:d2familiesCMC} for when $g_s$ are CMC surfaces). 
We establish in this section geometric criteria for checking when $F$ 
as in (\ref{eq:GeneralF}) is an $\mathcal R$-versal deformation of an $A_4$ or a non-transverse $A_3^-$-singularity of $F_0=F_{q_0,r(0)}$.

Denote by $S_{A_k}$ the set of points $((x,y),q,s)\in \mathbb R^2\times \mathbb R^3\times \mathbb R$ 
such that $F_{q,s}$ has an $A_{\ge k}$-singularity at $(x,y)$.
Consider the following system of equations 
\small
\begin{gather}
F_{q,s}=0 \label{eq:A4Cnd1}\\ 
\frac{\partial F_{q,s}}{\partial x}=0 \label{eq:A4Cnd2}\\ 
\frac{\partial F_{q,s}}{\partial y}=0 \label{eq:A4Cnd3}\\
\frac{\partial^2 F_{q,s}}{\partial x^2}\frac{\partial^2 F_{q,s}}{\partial y^2}-\left(\frac{\partial^2 F_{q,s}}{\partial x\partial y}\right)^2=0 \label{eq:A4Cnd4}\\
\frac{\partial^3 F_{q,s}}{\partial x^3}
\left(\frac{\partial^2 F_{q,s}}{\partial x \partial y}\right)^3-
\frac{\partial^3 F_{q,s}}{\partial x^2\partial y}
\frac{\partial^2 F_{q,s}}{\partial x^2}
\left(\frac{\partial^2 F_{q,s}}{\partial x \partial y}\right)^2 
+
\frac{\partial^3 F_{q,s}}{\partial x\partial y^2}
\left(\frac{\partial^2 F_{q,s}}{\partial x^2}\right)^2
\left(\frac{\partial^2 F_{q,s}}{\partial x \partial y}\right)
-
\frac{\partial^3 F_{q,s}}{\partial y^3}
\left(\frac{\partial^2 F_{q,s}}{\partial x^2}\right)^3=0 \label{eq:A4Cnd5}
\end{gather}
\normalsize
at $(x,y)$. 
Equation (\ref{eq:A4Cnd4}) means that the quadratic part 
of the Taylor expansion of $F_{q,s}$ at a singularity $(x,y)$ is a perfect square $L^2$, and equation (\ref{eq:A4Cnd5}) means that its cubic part divides $L$. 
Then $S_{A_2}$ (resp. $S_{A_3}$) is the set of points $((x,y),q,s)$ with $F_{q,s}$ satisfying 
equations (\ref{eq:A4Cnd1}) - (\ref{eq:A4Cnd4}) (resp. (\ref{eq:A4Cnd1}) - (\ref{eq:A4Cnd5})) 
at $(x,y)$.

\subsection{The $A_4$-singularity}\label{subsec:A4}

\begin{theorem}\label{theo:GemCritVersA4}
The family $F$ in (\ref{eq:GeneralF})
is an $\mathcal R$-versal deformation of an $A_4$-singularity of $F_0$ at $p_0$ if and only if 
the $S_{A_3}$ set is a regular curve at $p_0$.  When this is the case, the sections of the discriminant of $F$
along the parameter $s$ are generic if and only if the projection of the $S_{A_3}$ curve to 
the $(x,y)$ domain is a regular curve (then we get the bifurcations in Figure \ref{fig:familiesSS}, third column, 
in the wave fronts $W_{r(s)}$ as $s$ varies near zero).
\end{theorem}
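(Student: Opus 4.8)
# Proof Proposal

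The plan is to translate the analytic statement about $\mathcal{R}$-versality into the geometric transversality condition on the $S_{A_3}$ set, and then to analyze the discriminant sections separately. The standard criterion (see \cite{bruceParallel,looijenga}) says that $F$ is $\mathcal{R}$-versal at an $A_4$-singularity of $F_0$ at $p_0$ if and only if the partial derivatives $\partial F/\partial q_1, \partial F/\partial q_2, \partial F/\partial q_3, \partial F/\partial s$, evaluated at $p_0$ and projected modulo the Jacobian ideal together with the tangent space to the $\mathcal{R}$-orbit, span the relevant quotient space. For an $A_k$-singularity this quotient is $(k-1)$-dimensional, so here we need these four derivatives to span a $3$-dimensional space (the versality of an $A_4$ requires $4$ unfolding directions, one of which is always supplied by the constant). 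The first step is therefore to write out these infinitesimal generators explicitly using $F((x,y),q,s)=|g_s(x,y)-q|^2-r(s)$, differentiate, and identify the span.

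Next I would relate this span to the regularity of $S_{A_3}$. The set $S_{A_3}$ is cut out in $\mathbb{R}^2\times\mathbb{R}^3\times\mathbb{R}$ by the five equations \eqref{eq:A4Cnd1}--\eqref{eq:A4Cnd5}. The key observation is that the Jacobian matrix of these defining equations, restricted to the derivatives with respect to $q$ and $s$, is essentially the transpose of the matrix whose columns are the infinitesimal unfolding directions. Concretely, I expect to show that $S_{A_3}$ being a \emph{regular} curve at $p_0$ (i.e.\ the five defining equations have independent differentials, cutting a $1$-manifold in the $6$-dimensional ambient space) is exactly equivalent to the four unfolding vectors spanning the $3$-dimensional versality quotient. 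This is the heart of the argument: the equivalence between smoothness of the $A_{\geq 3}$-stratum and the infinitesimal versality condition, a translation that goes back to the general philosophy in \cite{looijenga} but which must be verified explicitly for the distance-squared family.

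For the second claim about genericity of the sections, once $S_{A_3}$ is a regular curve I would consider the two projections from this curve: the projection $\pi_s$ to the $s$-axis, which governs the Arnold-genericity of the wave-front evolution, and the projection to the $(x,y)$-domain. Following Arnold \cite{ArnoldWavefront}, the bifurcation in the family $W_{r(s)}$ is generic precisely when the point $p_0$ moves along the $A_3$-stratum in a suitably transverse way as $s$ varies, which amounts to a non-degeneracy condition on how the singular point traces out a curve in the source. I would argue that, given versality (so $S_{A_3}$ is already a regular curve), this reduces to the projection of $S_{A_3}$ to the $(x,y)$-plane being a local immersion, i.e.\ a regular curve. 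The geometric content is that the singular point does not stall in the source as $s$ passes through $0$.

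The main obstacle I anticipate is the explicit computation identifying the rank condition on the unfolding derivatives with the independence of the differentials of \eqref{eq:A4Cnd1}--\eqref{eq:A4Cnd5}. The equations for the higher $A_k$-strata (particularly \eqref{eq:A4Cnd4} and \eqref{eq:A4Cnd5}, which encode that the quadratic part is a perfect square $L^2$ and that the cubic part is divisible by $L$) are nonlinear in the derivatives of $F$, so one must choose adapted coordinates — placing the $A_4$-singularity in a normal form where $\ker(\dd f)_{p_0}$ and the singular set are aligned with the axes — to make the Jacobian of the system tractable. Getting the bookkeeping right so that the $5\times 6$ differential matrix has maximal rank $5$ if and only if the versality map is surjective is where the real work lies; everything else is either standard singularity theory or a clean transversality argument for the projections.
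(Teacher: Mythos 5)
Your proposal follows essentially the same route as the paper: the authors also put $g_s$ in Monge form, write out the infinitesimal $\mathcal R$-versality condition for the four unfolding directions $\dot F_a,\dot F_b,\dot F_c,\dot F_s$ modulo the Jacobian module, and then solve equations \eqref{eq:A4Cnd2}--\eqref{eq:A4Cnd4} for $q=(a,b,c)$ to reduce $S_{A_3}$ to two equations in $(x,y,s)$ whose $1$-jets are independent exactly when the versality inequality holds, with the genericity of the sections likewise identified with the nonvanishing of the $s$-coefficient, i.e.\ regularity of the projection of $S_{A_3}$ to the $(x,y)$-domain. The only slip is a bookkeeping one: the relevant quotient $\mathcal E(2,1)/\mathcal E(2,1)\{\partial F_0/\partial x,\partial F_0/\partial y\}$ for an $A_4$ is $4$-dimensional (spanned by $1,x,y^2,y^3$ after reduction), not $3$-dimensional, though this does not affect the viability of your argument.
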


\begin{proof}
	We 
	take, without loss of generality, the family of surfaces in Monge form $g_s(x,y)=(x,y,h_s(x,y))$ 
	at $p_0=(0,0)$ and write $q=(a,b,c)$. 
	
	We write the homogeneous part of degree $k$ in the Taylor expansion of $h_0$ as
	$\sum_{i=0}^ka_{i,k-i}x^iy^{k-i}$ and set
	$a_{0,0}=a_{1,0}=a_{0,1}=0$.
	
As we want the origin to be a singularity of $F_0$, we take $q_0=(0,0,c_0)$ and $r(0)=c_0^2$ 
so that  $F_0(0,0)=0$. We can make a rotation of the coordinate system and set $a_{1,1}=0$. 
	Then $a_{0,2}-a_{2,0}\ne 0$ as the origin is not an umbilic point. In particular, 
	$a_{0,2}\ne 0$ or $a_{2,0}\ne 0$. We suppose $a_{2,0}\ne 0$ and
	take $c_0=1/(2a_{0,2})$ in order for $F_0$ to have an $A_{\ge 2}$-singularity at the origin. Then 
	the conditions for the origin to be an $A_4$-singularity of $F_0$ are:
	$$
	\begin{array}{c}
	a_{0,3}=0,\,\,\, a_{1,2}^2+4(a_{0,2}-a_{2,0})(a_{0,4}-a_{0,2}^3)=0,\\
	4(a_{0,2}-a_{2,0})^2a_{0,5}+a_{1,2}(a_{2, 1}a_{1, 2}+2a_{1, 3}a_{0, 2}-2a_{1, 3}a_{2, 0})\ne 0.
	\end{array}
	$$
	
	Denote by
	$\dot{F}_a=({\partial F}/{\partial a})|_{a=0,b=0,c=c_0,s=0}$
	(similarly for $\dot{F}_b$, $\dot{F}_c$ and $\dot{F}_s$). Let 
	${\mathcal E}(2,1)$ be the ring of germs of functions $(\mathbb R^2,0)\to \mathbb R$ and $\mathcal M_2$ its maximal ideal.
	The family $F$ is an $\mathcal R$-versal deformation of the singularity of $F_0$ if and only if 
	
	\begin{equation}\label{eq:CondFintDetA4}
	{\mathcal E}(2,1)\{
	\frac{\partial F_{0}}{\partial x},
	\frac{\partial F_{0}}{\partial y}\}+
	\mathbb R\cdot{}
	\{\dot{F}_a,\dot{F}_b,\dot{F}_c,\dot{F}_s\}=\mathcal E(2,1).
	\end{equation}
	(see, e.g., \cite{Martinet}). 
	As $F_0$ is $5$-$\mathcal R$-determined, it is enough to show 
	that \eqref{eq:CondFintDetA4} holds modulo $\mathcal M^6_2$, i.e., 
	we can work in the $5$-jet space $J^5(2,1)$. Write  
	$j^5h_s=j^5h_0+j^5\left(\sum_{i,j,k} \beta_{i, j, k-i-j}x^iy^js^{k-i-j}\right)s$. Then,
	using ${\partial F_0}/{\partial x}$, 
	${\partial F_0}/{\partial y}$ and $\dot{F}_b$, we can get all the 
	monomials in $x,y$ of degree $\le 5$ in the left hand side of \eqref{eq:CondFintDetA4} except $1,x,y^2,y^3$. 
	Now, we can write $\frac{\partial F_0}{\partial x}$, $\dot{F}_a$,$\dot{F}_b$,$\dot{F}_c$,$\dot{F}_s$
	modulo the monomials already in the left hand side of \eqref{eq:CondFintDetA4}
	as linear combinations of $1,x,y^2,y^3$:
	$$
	\begin{array}{rl}
	\frac{\partial F_{0}}{\partial x}\sim& 
	2x-\frac{a_{1,2}a_{2,1}+a_{1,3}(a_{0,2}-a_{2,0})}{(a_{0,2}-a_{2,0})^2}y^3,\\
	\dot{F}_a\sim& 2x+\frac{a_{1,2}}{a_{0,2}-a_{2,0}}y^2,\\
	\dot{F}_c\sim& \frac{1}{a_{0,2}}-2a_{0,2}y^2,\\
	\dot{F}_s\sim& 
	(a_{0, 2}r'(0)-\beta_{0, 0, 0})-\beta_{1, 0, 0}x
	+\frac{2(a_{0, 2}-a_{2, 0})(2a_{0, 2}^2\beta_{0, 0, 0}-\beta_{0, 2, 0})	-a_{1, 2}\beta_{1, 0, 0}}
	{2(a_{0,2}-a_{2,0})}y^2\\
	&
	+\frac{
		2(a_{0, 2}	-a_{2,0})(2a_{0,2}^2\beta_{0, 1, 0}-\beta_{0, 3, 0})-a_{1, 2}\beta_{1, 1, 0}
	}
	{2(a_{0,2}-a_{2,0})}y^3.
	\end{array}
	$$
	
	The family $F$ is an $\mathcal R$-versal deformation if and only if 
	the above vectors are linearly independent, equivalently,
	$$
	\begin{array}{l}
	4a_{0, 2}^3(a_{1, 2}a_{2, 1}+a_{1, 3}(a_{0, 2}-a_{2, 0}))r'(0)
	-4a_{0, 2}^2a_{1,2}(a_{0, 2}-a_{2, 0})\beta_{0, 1, 0}\\
	+a_{1, 2}^2\beta_{1, 1, 0}-2(a_{1, 2}a_{2, 1}+a_{1, 3}(a_{0, 2}-a_{2, 0}))\beta_{0, 2, 0}
	+2a_{1, 2}(a_{0, 2}-a_{2,0})\beta_{0, 3, 0}\ne 0.
	\end{array}
	$$

We consider now the $S_{A_3}$ set. Equations (\ref{eq:A4Cnd2}) - (\ref{eq:A4Cnd4}) give $a,b,c$ as functions of $x,y,s$. Substituting these in equations (\ref{eq:A4Cnd1}) and (\ref{eq:A4Cnd5}), 
gives the following 1-jets of their left hand sides, respectively, up to non-zero scalar multiples, 
\small
\begin{gather}
	-a_{1, 2}x+(2a_{0, 2}^3r'(0)-\beta_{0,2,0})s,\nonumber\\
	2(a_{1, 3}a_{2, 0}-a_{1, 3}a_{0, 2}-a_{1, 2}a_{2, 1})x+
	\left(2(a_{0, 2}	-a_{2,0})(2a_{0,2}^2\beta_{0, 1, 0}-\beta_{0, 3, 0})-a_{1, 2}\beta_{1, 1, 0}\right)s. \label{eq:1jetQ1}
\end{gather}
\normalsize
The set $S_{A_3}$ is a regular curve if and only if the above 1-jets are linearly independent, 
which is precisely the condition above for $F$ to be an $\mathcal R$-versal family.
	
	It is not difficult to show that we get the generic sections of the discriminant of $F$ (see \cite{ArnoldWavefront}) if and 
	only if the coefficient of $y^3$ in $\dot{F}_s$ above is not zero. This is precisely the condition for 
	for coefficient of $s$ in (\ref{eq:1jetQ1}) to be non-zero, 
	which in turn is equivalent to the projection of the curve $S_{A_3}$ to the 
	$(x,y)$ domain to be regular.
\end{proof}
\subsection{The non-transverse $A_3^{-}$}\label{subsec:nonversalA3}

Bruce gave in \cite{bruceParallel} geometric conditions for parallels of a surface $M$ in 
$\mathbb R^3$ to undergo the generic bifurcations of wave fronts in 
\cite{ArnoldWavefront} at a non-transverse $A_3^{\pm}$. 
In \cite{bruceParallel}, $r(s)=s_0+s$ in (\ref{eq:GeneralF}) and $g_s=g_0$, so $F$ is an $\mathcal R$-versal  
deformation and  $S_{A_2}$ (resp.  $S_{A_3}$) is a regular surface (resp. curve).
For $F$ as in  (\ref{eq:GeneralF}), the geometric conditions  
in \cite{bruceParallel} are:
(i) $F$ is an $\mathcal R$-versal deformation of $F_0$, so $S_{A_2}$ (resp.  $S_{A_3}$) is a regular surface (resp. curve)) and (ii) the projection 
$\pi:S_{A_2} \to (\mathbb R,0)$ to the parameter 
 $s$ is a submersion and its restriction to the $S_{A_3}$ curve is a Morse function (i.e., $\pi^{-1}(0)$ and $S_{A_3}$ have ordinary tangency). Then the parallels $W_{r(s)}$ undergo the bifurcations in Figure \ref{fig:familiesSS}, second column. (When $\pi^{-1}(0)$ is transverse to the
 $S_{A_3}$ curve, i.e., $F_0$ has a transverse $A_3^{\pm}$-singularity, 
 the parallels $W_{r(s)}$ are all swallowtails for $s$ near zero.) We give below equivalent geometric conditions 
 which are useful for constructing families of spherical surfaces in \S \ref{sec:constructionSS} 
 with the desired properties.

\begin{theorem}\label{theo:recogCuspidalLipsBeaks}
(1)  The family $F$ as in (\ref{eq:GeneralF}) is an $\mathcal R$-versal deformation of 
 a non-transverse $A_3^{\pm}$-singularity of $F_0$ if and only if the projection $\pi$ is a submersion,
 equivalently, the surface formed by the singular sets of $W_{r(s)}$ in the $(x,y,s)$-space is a regular surface.
 
(2) Suppose that (1) holds.  Then, the projection $\pi|_{S_{A_3}}$ is 
a Morse function if and only if the singular set of the
parallel $W_{r(0)}$, in the domain, has a Morse singularity.

(3) As a consequence, the wave fronts
$W_{r(s)}$ undergo the bifurcations in Figure \ref{fig:familiesSS}, second column
if and only if 
the surface formed by the singular sets of $W_{r(s)}$ in the $(x,y,s)$-space is a regular surface 
and its sections by the planes $s=constant$ 
undergo the generic Morse transitions as $s$ varies near zero.
\end{theorem}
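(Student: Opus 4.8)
My plan is to treat the non-transverse $A_3^\pm$ exactly as the $A_4$ case was handled in Theorem~\ref{theo:GemCritVersA4}: first reduce $\mathcal R$-versality to a single numerical condition via the criterion \eqref{eq:CondFintDetA4}, then read that condition off the explicit equations for $S_{A_2}$ and $S_{A_3}$. I keep the normalisations of that proof: $g_s(x,y)=(x,y,h_s(x,y))$, $p_0=0$, $q=(a,b,c)$, $a_{1,1}=0$, $a_{2,0}\neq0$, $c_0=1/(2a_{0,2})$ and $a_{0,3}=0$, so that $F_0$ has an $A_{\ge3}$-singularity. A preliminary observation I would record is that in these coordinates the \emph{non-transverse} condition is $a_{1,2}=0$: since the relevant principal curvature has $2$-jet $2a_{0,2}+2a_{1,2}x+6a_{0,3}y$ along the surface, $a_{0,3}=0$ is the ridge condition (criticality along the principal direction) while $a_{1,2}=0$ is criticality transverse to it, so $a_{0,3}=a_{1,2}=0$ is exactly what makes the singular set of $W_{r(0)}$ fail to be regular and makes $S_{A_3}$ tangent to the fibre $\{s=0\}$. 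As in Theorem~\ref{theo:GemCritVersA4}, I describe the singular-set surface by solving \eqref{eq:A4Cnd2}--\eqref{eq:A4Cnd4} for $(a,b,c)$ as functions of $(x,y,s)$ and substituting into \eqref{eq:A4Cnd1}; call the result $\Phi(x,y,s)$, so $\{\Phi=0\}$ is the surface formed by the $\mathrm{Sing}(W_{r(s)})$ and $S_{A_3}=\{\Phi=0\}\cap\{\Psi=0\}$, with $\Psi$ the left-hand side of \eqref{eq:A4Cnd5}.

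For (1), $F_0$ is $4$-$\mathcal R$-determined, so I test \eqref{eq:CondFintDetA4} in $J^4(2,1)$, where $\mathcal E(2,1)/\mathcal E(2,1)\{\partial_xF_0,\partial_yF_0\}$ is spanned by $1,y,y^2$. Reducing the centre-derivatives modulo the Jacobian ideal as in Theorem~\ref{theo:GemCritVersA4} gives $\dot F_b\sim-2y$, $\dot F_c\sim\frac1{a_{0,2}}-2a_{0,2}y^2$ and $\dot F_a\sim-\frac{a_{1,2}}{a_{0,2}-a_{2,0}}y^2$. Because $a_{1,2}=0$ the class of $\dot F_a$ vanishes, so the three centre-derivatives span only the codimension-one subspace $\langle\,y,\;1-2a_{0,2}^2y^2\,\rangle$; hence $F$ is $\mathcal R$-versal if and only if the reduced $\dot F_s$ meets the remaining direction. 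Computing that residual coefficient, conveniently as the $2\times2$ determinant formed by the $\langle1,y^2\rangle$-components of $\dot F_c$ and $\dot F_s$, I expect it to be proportional to $2a_{0,2}^3r'(0)-\beta_{0,2,0}$, which is exactly the coefficient of $s$ in the $1$-jet of $\Phi$. Since $a_{1,2}=a_{0,3}=0$ force $\Phi_x(p_0)=\Phi_y(p_0)=0$, this says versality is equivalent to $\Phi_s(p_0)\neq0$, i.e. to $\{\Phi=0\}$ being regular, which is the content of (1) (the submersion reformulation in the statement then follows by inspecting the same $1$-jet of $\Phi$).

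For (2) I assume (1), so $\Phi_s(p_0)\neq0$ and $\{\Phi=0\}$ is the graph $s=S(x,y)$ of a function with $S(p_0)=0$; as $\Phi_x(p_0)=\Phi_y(p_0)=0$, the point $p_0$ is a critical point of $S$ with $\mathrm{Hess}\,S(p_0)=-\mathrm{Hess}_{(x,y)}\Phi(p_0)/\Phi_s(p_0)$. The singular set of $W_{r(0)}$ in the domain is $\{\Phi(x,y,0)=0\}=\{S=0\}$, so it has a Morse singularity at $p_0$ precisely when $\mathrm{Hess}\,S(p_0)$ is non-degenerate. On the other hand, in the graph coordinates $\pi|_{S_{A_3}}$ is the restriction of $S$ to the regular curve $S_{A_3}$, whose second-order part at the critical point $p_0$ is $\mathrm{Hess}\,S(p_0)$ evaluated on $T_{p_0}S_{A_3}$. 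The main obstacle is to show these two conditions coincide, i.e. that $\pi|_{S_{A_3}}$ is Morse exactly when the full Hessian is non-degenerate. Conceptually this is clear, since $S_{A_3}$ is the locus of swallowtail points of the nearby parallels, whose collision at $s=0$ \emph{is} the formation of the node of $\{S=0\}$; I would make it rigorous by a $2$-jet computation, substituting the graph $s=S(x,y)$ into $\Psi$ to locate $T_{p_0}S_{A_3}$ and comparing with the characteristic directions of $\mathrm{Hess}\,S(p_0)$, using the cuspidal-beaks recognition data of Theorem~\ref{theo:PossibCodm1SingSS} (transversality of $\ker(\dd f)_{p_0}$ to the branches of the singular set) to rule out the degenerate alignments.

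Part (3) is then the synthesis. By (1) the surface $\{\Phi=0\}$ is regular iff $F$ is $\mathcal R$-versal, and by (2), given versality, $\pi|_{S_{A_3}}$ is Morse iff $\{S=0\}=\{s=0\}\cap\{\Phi=0\}$ is Morse-singular; but the sections $\{S=\mathrm{const}\}$ are exactly the curves $\mathrm{Sing}(W_{r(s)})$, and a Morse singularity of the zero section is equivalent to these sections undergoing a non-degenerate Morse transition as $s$ crosses $0$. Feeding these two equivalences into Bruce's criterion (conditions (i)--(ii) recalled before the statement) and Arnold's list in \cite{ArnoldWavefront} identifies the transition as the second column of Figure~\ref{fig:familiesSS}, which is the assertion.
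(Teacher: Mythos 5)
Your proposal tracks the paper's own proof almost exactly: the same Monge-form normalisation, the identification of the non-transverse condition with $a_{0,3}=a_{1,2}=0$, the reduction of $\mathcal R$-versality to the single condition $2a_{0,2}^3r'(0)-\beta_{0,2,0}\neq 0$ (your $2\times 2$ determinant of the $\langle 1,y^2\rangle$-components of $\dot F_c$ and $\dot F_s$ does evaluate to $\frac{1}{a_{0,2}}(2a_{0,2}^3r'(0)-\beta_{0,2,0})$), and the matching of that constant with the coefficient of $s$ in the equation of the singular-set surface, which is exactly how the paper proves (1) and (3).

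The one point to press you on is part (2). As you correctly isolate, the whole content of the ``if and only if'' is that $\mathrm{Hess}\,S(p_0)$ restricted to $T_{p_0}S_{A_3}$ vanishes precisely when $\det\mathrm{Hess}\,S(p_0)$ does; this is not a general fact about restricting a quadratic form to a line, and it holds here only because of the particular position of $T_{p_0}S_{A_3}$. The computation you defer is the one that settles it, and it is what the paper actually does: solving \eqref{eq:A4Cnd5} for $y$ shows that the projected tangent line of $S_{A_3}$ is the critical line of $Q$ with respect to $y$ (where $\partial Q/\partial y=0$, $Q$ as in \eqref{eq:2jetSA2s=0}), and the value of $Q$ along that line is a nonzero multiple of the discriminant $\Lambda$ of $Q$ (the coefficient $6(a_{0,2}^3-a_{0,4})$ of $y^2$ being nonzero); hence the restricted Hessian is degenerate exactly when $Q$ is, which gives both implications of (2) at once. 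Your fallback of invoking the cuspidal-beaks recognition data (transversality of $\ker(\dd f)_{p_0}$ to the branches) would not substitute for this computation: those criteria presuppose the nondegeneracy you are trying to characterise, the equivalence in (2) must also hold in the non-Morse case, and in any event they constrain $\ker(\dd f)_{p_0}$ rather than the tangent of the swallowtail locus $S_{A_3}$, which is a different direction. With the deferred $2$-jet computation actually carried out, your argument coincides with the paper's.
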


\begin{proof}
(1) We take the surfaces $g_s$ in Monge form $g_s(x,y)=(x,y,h_s(x,y))$ 
and use the notation in the proof of Theorem \ref{theo:GemCritVersA4}. 
For $h_0$,  we have $a_{2,0}-a_{0,2}\ne 0$ and we can take $a_{0,2}\ne 0$. 
Then $F_{0}$  has a non-transverse $A_3^{\pm}$-singularity at the 
origin if and only if 
$
a_{0,3}=0$, $a_{1,2}=0$  and $a_{0,2}^3- a_{0,4}\ne 0.$

Calculations similar to those in the proof of Theorem \ref{theo:GemCritVersA4}
show that $F$ is an $\mathcal R$-versal deformation of the non-transverse $A_3^{\pm}$-singularity of $F_0$ at the origin 
if and only if $2a_{0, 2}^3r'(0)-\beta_{0, 2, 0}\ne 0$. 

We calculate the set $S_{A_2}$  in the same way as in the proof of Theorem \ref{theo:GemCritVersA4} by 
solving the system of equations \eqref{eq:A4Cnd1}-\eqref{eq:A4Cnd4}. We use 
\eqref{eq:A4Cnd2}-\eqref{eq:A4Cnd4} to write $a,b,c$ as functions of 
$(x,y,s)$ and substitute in \eqref{eq:A4Cnd1} to obtain  
\begin{equation}\label{eq:SA2nontransverseA3}
(2a_{0, 2}^3r'(0)-\beta_{0, 2, 0})s-Q(x,y) +\alpha_1 sx+\alpha_2s^2+O_3(x,y,s)=0,
\end{equation}
with $\alpha_1,\alpha_2$ irrelevant constants,  $O_3$ a smooth function with a zero 2-jet and
\small
\begin{equation}\label{eq:2jetSA2s=0}
Q(x,y)=\frac{2a_{02}^2a_{20}^2-2a_{02}a_{20}^3-a_{02}a_{22}+a_{20}a_{22}-a_{21}^2}{a_{02}-a_{20}}x^2
-3a_{13}xy+6(a_{02}^3-a_{04})y^2.
\end{equation}
\normalsize

Clearly,  \eqref{eq:SA2nontransverseA3} shows that the condition $2a_{0, 2}^3r'(0)-\beta_{0, 2, 0}\ne 0$ 
for $F$ to be an $\mathcal R$-versal family
is precisely the condition for $\pi$ to be a submersion 
and for the projection of the set $S_{A_2}$ to the $(x,y,s)$-space to be a regular surface 
(which is the surface formed by the family of the singular sets of the wave fronts $W_{r(s)}$). 

For (2), the singular set of $W_{r(0)}$, in the domain, is obtained by setting $s=0$ in \eqref{eq:SA2nontransverseA3}.
We can always solve \eqref{eq:A4Cnd5} for $y$. Substituting in  
\eqref{eq:SA2nontransverseA3}
gives the equation of the set $S_{A_3}$ in the form 
$$
(2a_{0, 2}^3r'(0)-\beta_{0, 2, 0})s- \frac{\Lambda}{24(a_{02}^3-a_{04})}
x^2+O_3(x,s)=0,
$$
with $\Lambda$ the discriminant of the quadratic form $Q$ in \eqref{eq:2jetSA2s=0}, and the result follows.

Statement (3) is a consequence of (1) and (2) and application of the results from\cite{bruceParallel}. 
\end{proof}

\section{Construction of spherical surfaces and the singular geometric Cauchy problem}\label{sec:constructionSS}
In this section we show how to construct all the codimension $\le 1$ singularities as well as their 
bifurcations in generic $1$-parameter families of spherical surfaces, from simple geometric data along 
a regular curve in $\SSS^2$.

Singularities of spherical surfaces are analyzed in \cite{spherical}, using $SU(2)$ frames for the surfaces.
The $SU(2)$ frames are needed in order to construct the solutions using loop group methods, however
they are not needed to discuss singularities in terms of geometric data along the curve. We therefore
begin with a more direct geometric derivation of some of the results of \cite{spherical}.

As mentioned above, for a spherical frontal $f$, with corresponding Gauss map $N$,
we have $\rank (\dd f) = \rank (\dd N)$, so the singular set of $f$ is the same as the
singular set of $N$ and is determined by the vanishing of the function
\beqas
\mu = \langle f_x \times f_y , N \rangle = \langle  N_x \times N_y , N \rangle.
\eeqas

A singular point is called \emph{non-degenerate} if $\dd \mu \neq 0$ at that point (see \cite{suy}).

\subsection{Stable singularities}
If a frontal is non-degenerate at a point then the local singular locus is a regular curve in the coordinate domain.
In such a case we can always choose local conformal coordinates $(x,y)$ such that the singular set is locally 
represented by the curve \mbox{$\{ y=0 \}$} in the domain and by $f(x,0)$ on the spherical surface.  We make this assumption throughout this section.  To analyze the singular
curve in terms of geometric data, it is also convenient to use an arc-length parameterization.
Considering the pair of equations
\beq  \label{fNeqns}
f_x = N \times N_y, \quad \quad f_y = -N \times N_x,
\eeq
it is clear that, at a given singular point, either $f_x \neq 0$ or $N_x \neq 0$ (or both). This allows us to use either the
curve $f(x,0)$ or the curve $N(x,0)$ as the basis of analysis, since at least one of them is regular.

\subsubsection{Case that $f(x,0)$ is  a regular curve }  In this case, we assume further that coordinates are chosen
such that $|f_x(x,0)|=1$.  An orthonormal frame for $\real^3$ is given by
\[
e_1 = \frac{f_x}{|f_x|}, \quad \quad e_2 = e_3 \times e_1 = -\frac{N_y}{|f_x|},  \quad \quad e_3 = N,
\]
where the expression for $e_2$ comes from \eqref{fNeqns}.
Now let us write 
\[
f_y= ae_1 + be_2.
\]
Then $f_x \times f_y =  \mu e_3$, where $\mu = |f_x|b$,
and the singular curve $C$ in the domain 
is locally given by $b=0$.  Along $C$, we have
\[
\dd \mu = 0 + 1 \cdot \dd b =  \dd b = \frac{\partial b}{\partial y} \dd y,
\]
so the non-degeneracy condition is $b_y \neq 0$.  From \eqref{fNeqns}, we also have
\[
N_x = -b e_1 + a e_2.
\]
To find the Frenet-Serret frame $(\bt, \bn, \bb)$, along the curve $f(x,0)$,
we can differentiate $f_x = N \times N_y$ to obtain
\beqas
\kappa \bn = f_{xx} & = & N_x \times N_y + N \times N_{xy} \\
&=& N \times N_{xy},
\eeqas
because $N_x$ and $N_y$ are parallel along the singular curve. Here $\kappa(x)$ is the curvature
function of the curve $f(x,0)$.  Hence $\bn$ is orthogonal to $N$,
and so we conclude that
\[
(\bt, \bn, \bb) = (e_1 (x,0), e_2(x,0), e_3(x,0)).
\]
Then the Frenet-Serret formulae gives
\[
- \tau \bn = \bb_x = N_x.
\]
Comparing with the expression above for $N_x$ we conclude that 
\[
a = -\tau.
\]
Thus, along $C$, we have $f_x = e_1$ and $f_y = - \tau e_1$. Hence the \emph{null direction} for $f$, i.e. the kernel
of $\dd f$ is given by 
\[
\eta_f = \tau \partial _x + \partial _y.
\]
Since the null direction is transverse to the singular curve, it follows by using the recognition criteria in \cite{suy} that the singularity is a cuspidal edge.

\begin{figure}[ht]
	\centering
	$
	\begin{array}{ccc}
	\includegraphics[height=22mm]{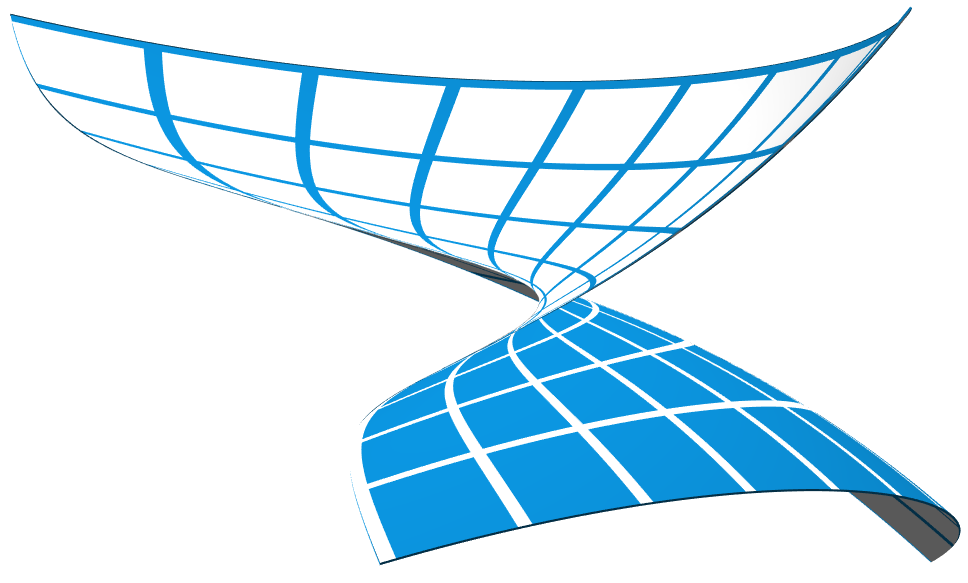}  \quad & 
	\includegraphics[height=22mm]{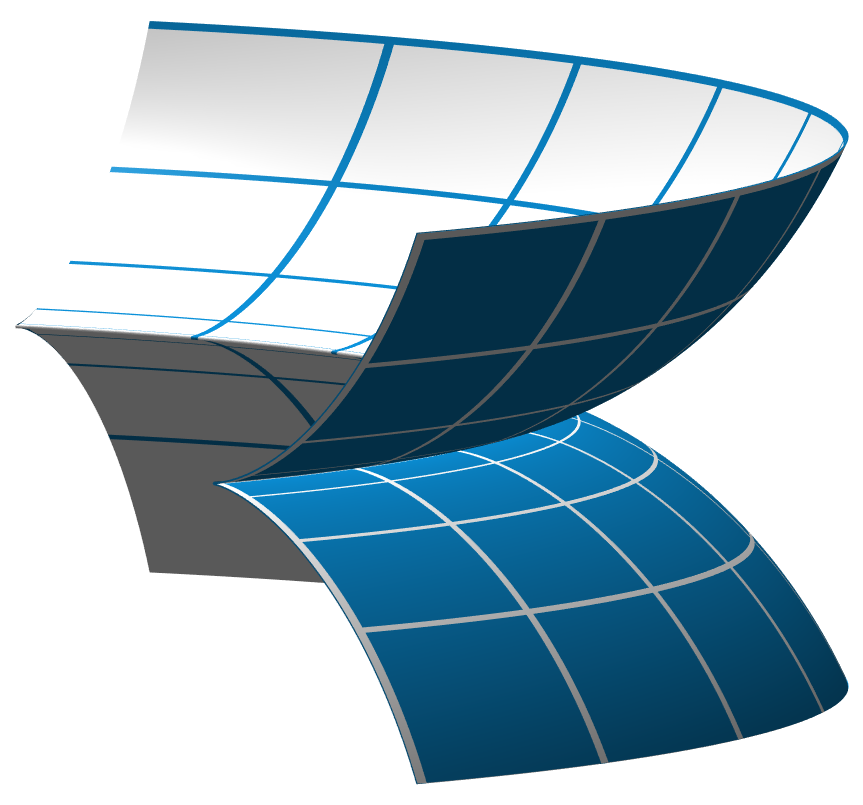} & \quad
	\includegraphics[height=22mm]{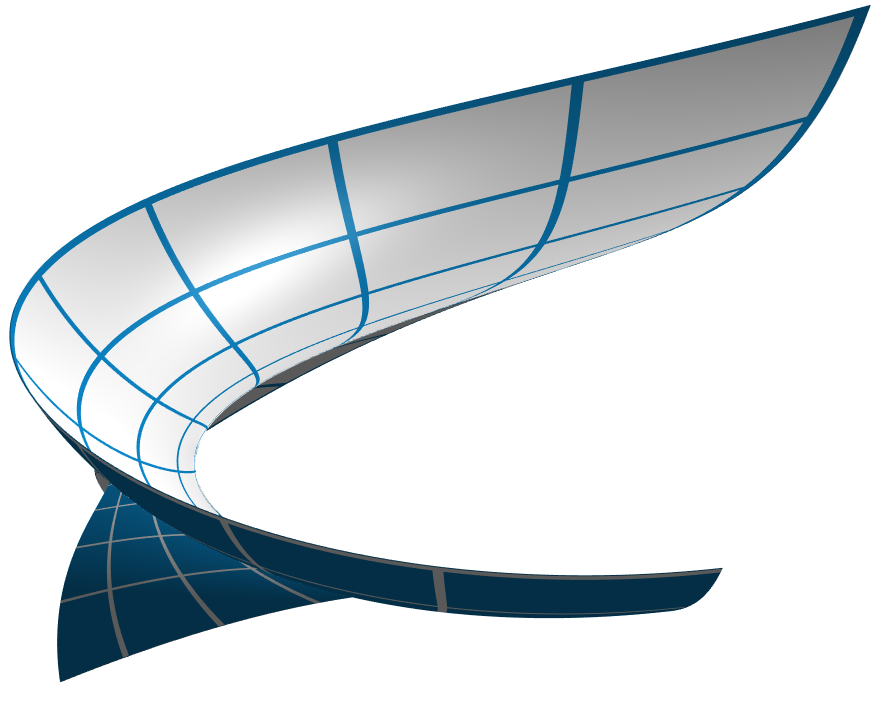}   \vspace{1ex}\\
	\includegraphics[height=22mm]{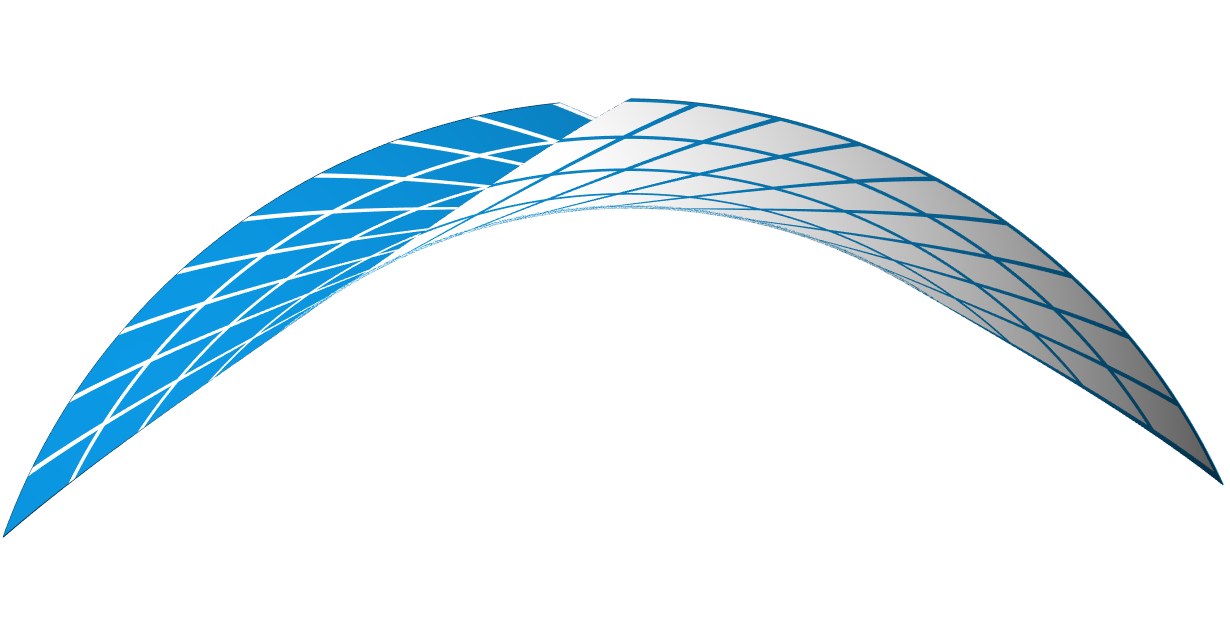}  \quad & 
	\includegraphics[height=22mm]{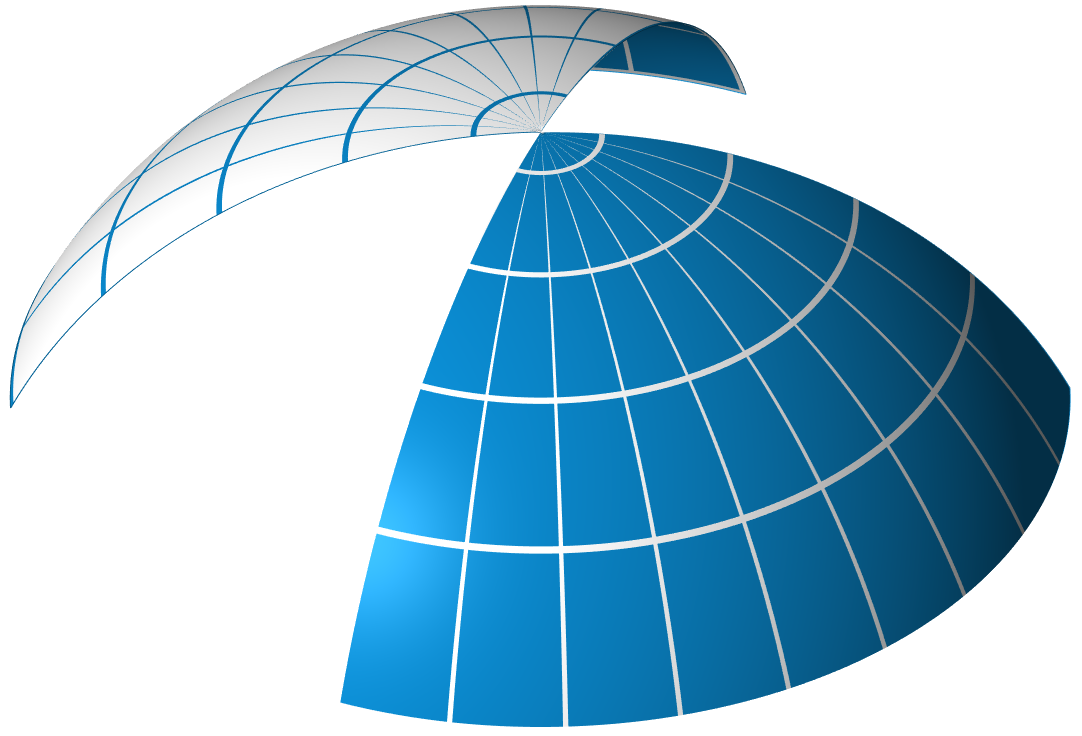}  & \quad
	\includegraphics[height=22mm]{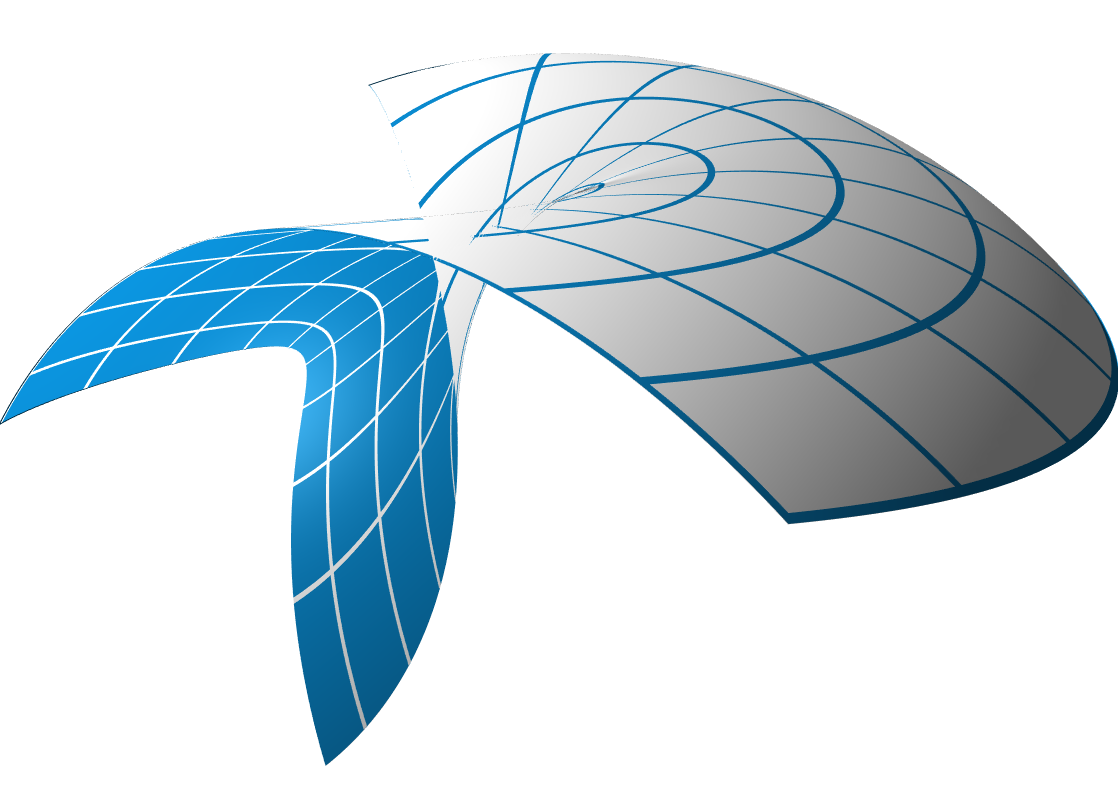}  \\
	(\kappa, \tau)=(1,1) &  (\kappa,\tau)=(1,0) & (\kappa,\tau)=(1,s)
	\end{array}
	$
	\caption{Above: the spherical surface $f$ generated by the singular curve data $(\kappa(s),\tau(s))$.
		Below: the corresponding harmonic Gauss map $N$.
	}
	\label{figure_ex1}
\end{figure}

Note: the nondegeneracy condition $\partial_y b \neq 0$ can be given in terms of the curve $f(x,0)$ as follows.
We have $b=-\langle N_x, e_1 \rangle$, so 
\[
\frac{\partial b}{\partial y} = - \langle N_{yx}, e_1 \rangle - \langle N_x, \partial _y e_1 \rangle.
\]
Along $C$ we have $N_x = -\tau e_2$ and $N_y = -e_2 = f_x \times N$, so
\[
N_{yx} = \kappa e_1 - \tau e_3.
\]
Differentiating $e_1 = f_x/|f_x|$ and restricting to the curve $f(x,0)$ along which $|f_x(x,0)|=1$ , we have
\beqas
\frac{\partial e_1 }{\partial y} = \frac{\partial}{\partial y}\left(\frac{1}{|f_x|}\right) e_1 + f_{xy}
&=& M e_1 - \tau \kappa e_2,
\eeqas
where the factor $M$ is immaterial. Hence
\[
\frac{\partial b}{\partial y}  = -\kappa(1+\tau^2),
\]
and so the non-degeneracy condition is $\kappa \neq 0$.

\medskip
\noindent 
\textbf{Comparison with the singular set of $N$:}
Along $C$ we have $N_x = -\tau e_2$ and $N_y = -e_2$, so the null direction for $N$ is
\[
\eta_N = \partial _x - \tau \partial y.
\]
Therefore it is possible for the null direction for $N$ to point along the curve.  According to the terminology in 
\cite{wood1977}, the harmonic map $N$ has, at $p=(x_0,0)$:
\begin{enumerate}
	\item A fold point if $\tau(x_0) \neq 0$;
	\item A collapse point if $\tau(x) \equiv 0$, on a neighbourhood of $x_0$;
	\item A good singularity of higher order otherwise.
\end{enumerate}
Note that these singularities of $N$ all correspond to cuspidal edge singularities of $f$.
Geometrically, they are reflected in the geometry of the spherical frontal by whether or not
the singular curve is planar, or has a single point of vanishing torsion (see Figure \ref{figure_ex1}).

\subsubsection{Case that $N(x,0)$ is a regular curve } 
In this case we can choose coordinates such that the curve $N(x,0)$ is unit speed,
so $|N_x(x,0)|= |f_y(x,0)| = 1$, and a frame
\[
e_1 = \frac{N_x}{|f_y|}, \quad e_2 = -\frac{f_y}{|f_y|}, \quad e_3 = N.
\]
Write 
\[
f_x = a e_1 + b e_2,
\]
so $f_x \times f_y = \mu e_3$, where $\mu = -a |f_y|$, 
so the singular curve $C$ in the domain is the set $a=0$, 
and the non-degeneracy condition is
\[
\frac{\partial a}{\partial y} \neq 0.
\]
From $f_x = N \times N_y$ we have
\[
N_y = b e_1 - a e_2.
\]
Since, along $C$, we have $f_x=be_2$ and $f_y=-e_2$, and $N_x = e_1$ and $N_y = b e_1$, the
null directions for the two maps are:
\beqas
\eta_f = \partial _x + b \partial _y,\\
\eta_N = b \partial _ x - \partial _y.
\eeqas
Hence the singularity for $N$ is always a fold point, because $\eta_N$ is transverse to the singular curve.

For the map $f$, which is a wave front, we can use the criteria from \cite{krsuy} to conclude that
the singularity at the point $(x_0,0)$ is: 
\begin{enumerate}
	\item A cone point if and only if $b(x,0) \equiv 0$ in a neighbour of $x_0$; 
	\item A cuspidal edge if $b(x_0,0) \neq 0$.
	\item A swallowtail if and only if $b(x_0,0)=0$ and $b_x(x_0,0) \neq 0$;
	\end{enumerate}
Examples are shown in Figure \ref{figure_ex2}.
\begin{figure}[ht]
	\centering
	$
	\begin{array}{cccc}
	\includegraphics[height=22mm]{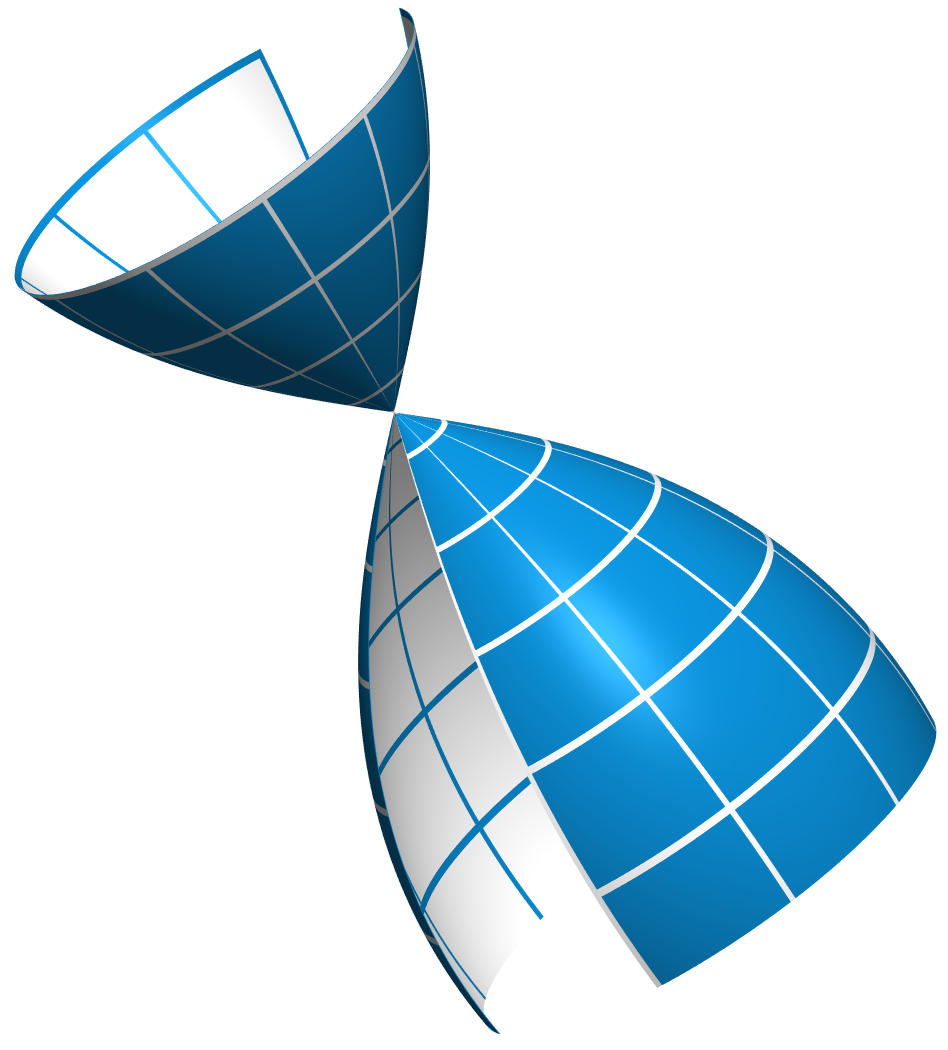} \,\, & \, \,
	\includegraphics[height=22mm]{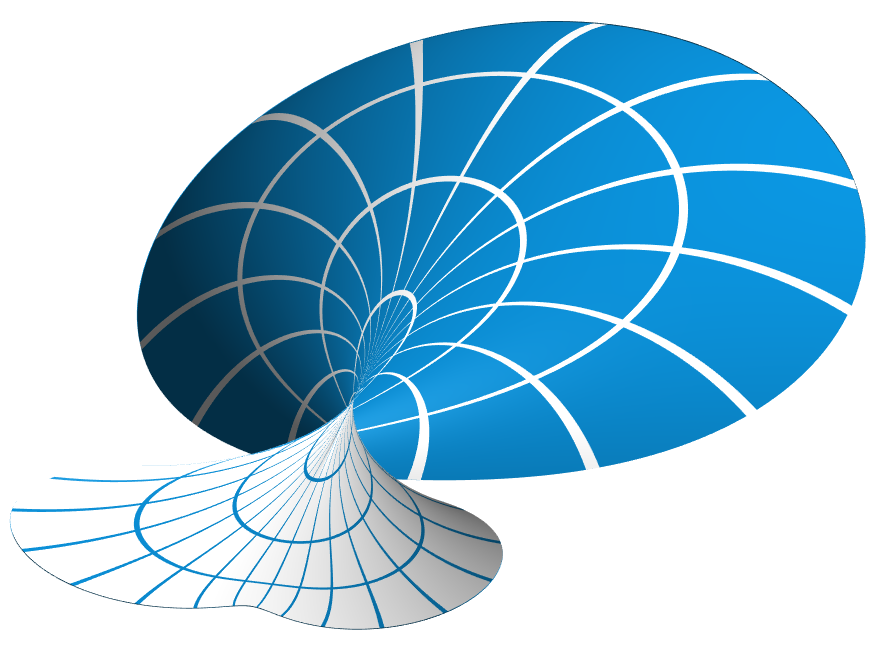}  \,\, & \, \,
	\includegraphics[height=22mm]{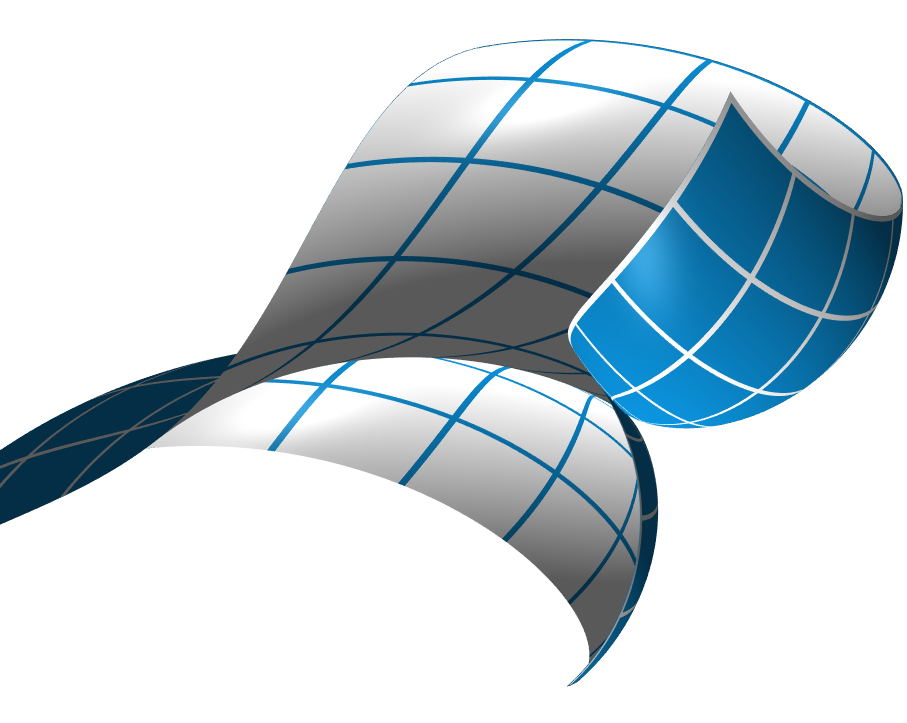}  \,\, & \, \,
	\includegraphics[height=22mm]{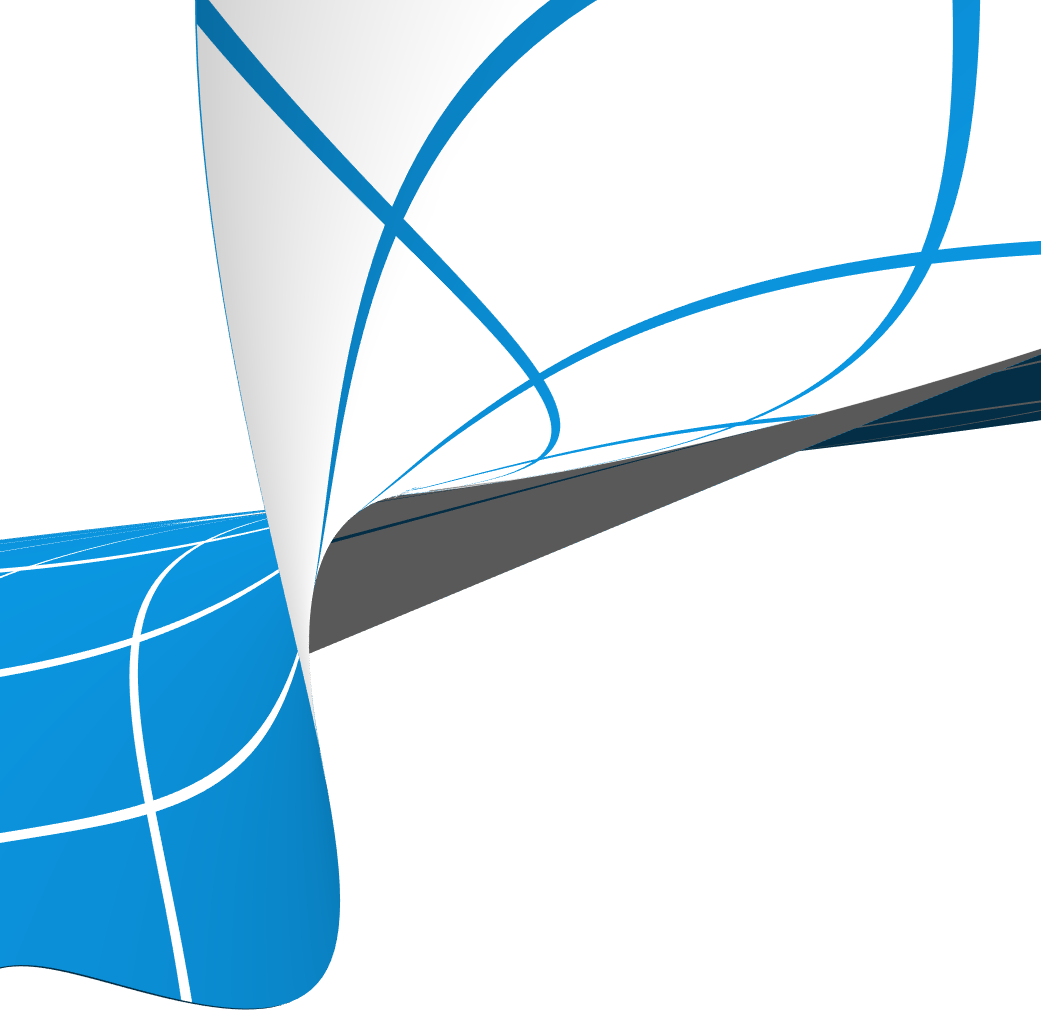}   \vspace{1ex} \\
	\includegraphics[height=22mm]{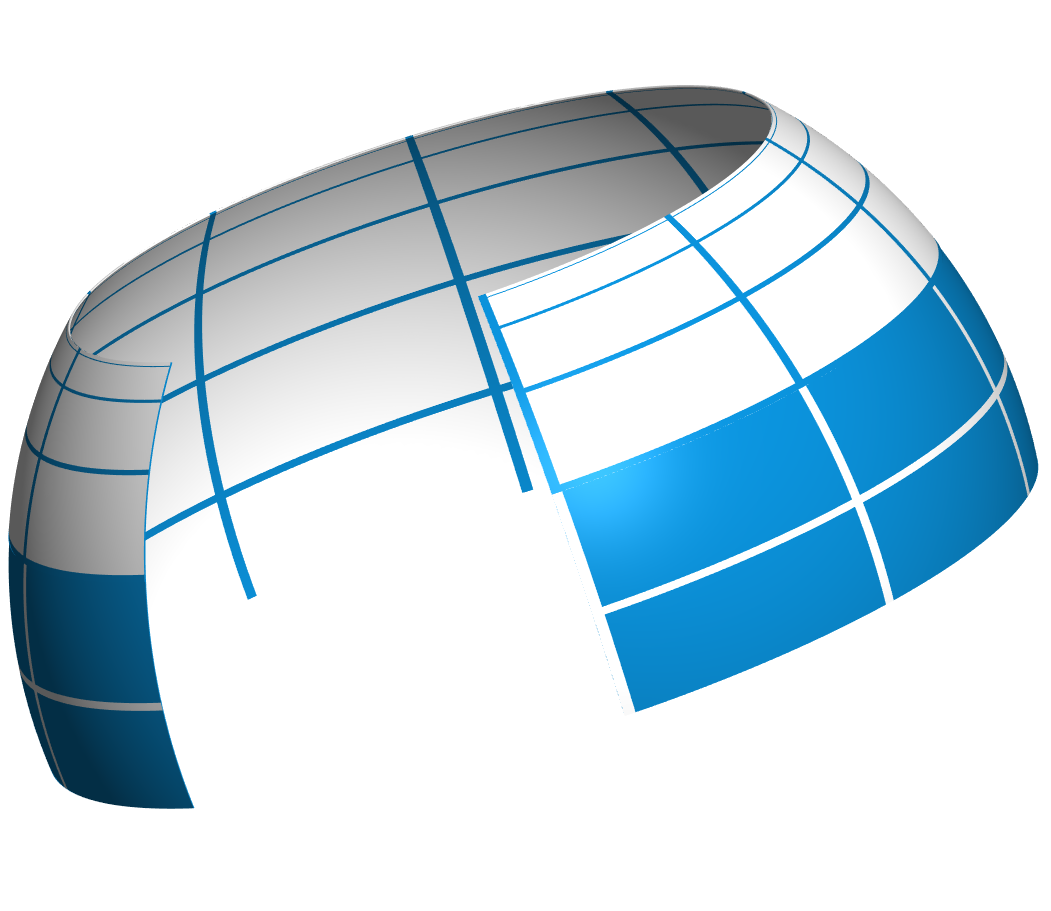}  \,\, & \, \,
	\includegraphics[height=22mm]{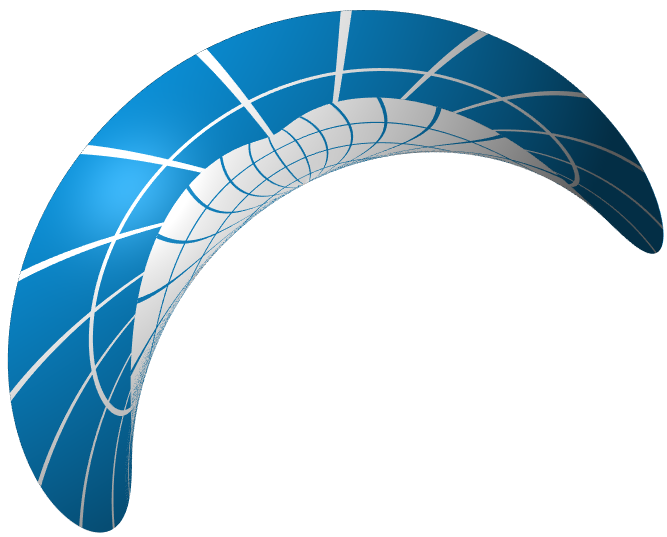}  \,\, & \, \,
	\includegraphics[height=22mm]{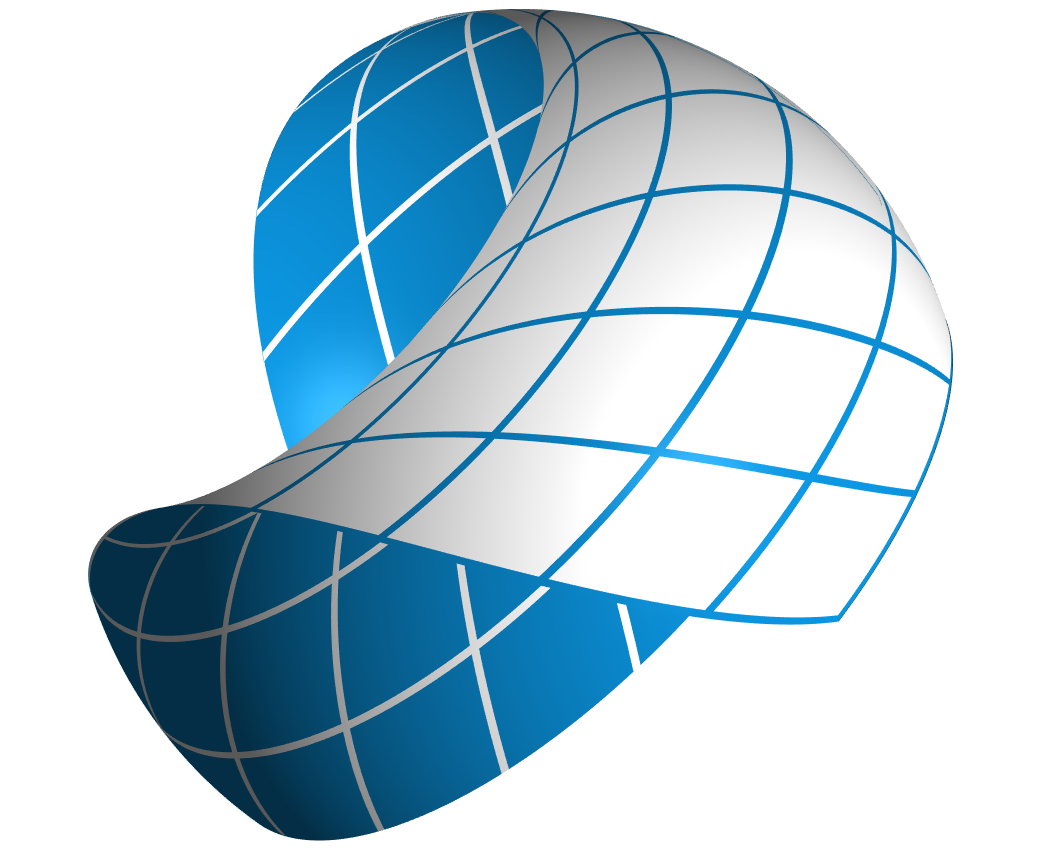}  \,\, & \, \,
	\includegraphics[height=22mm]{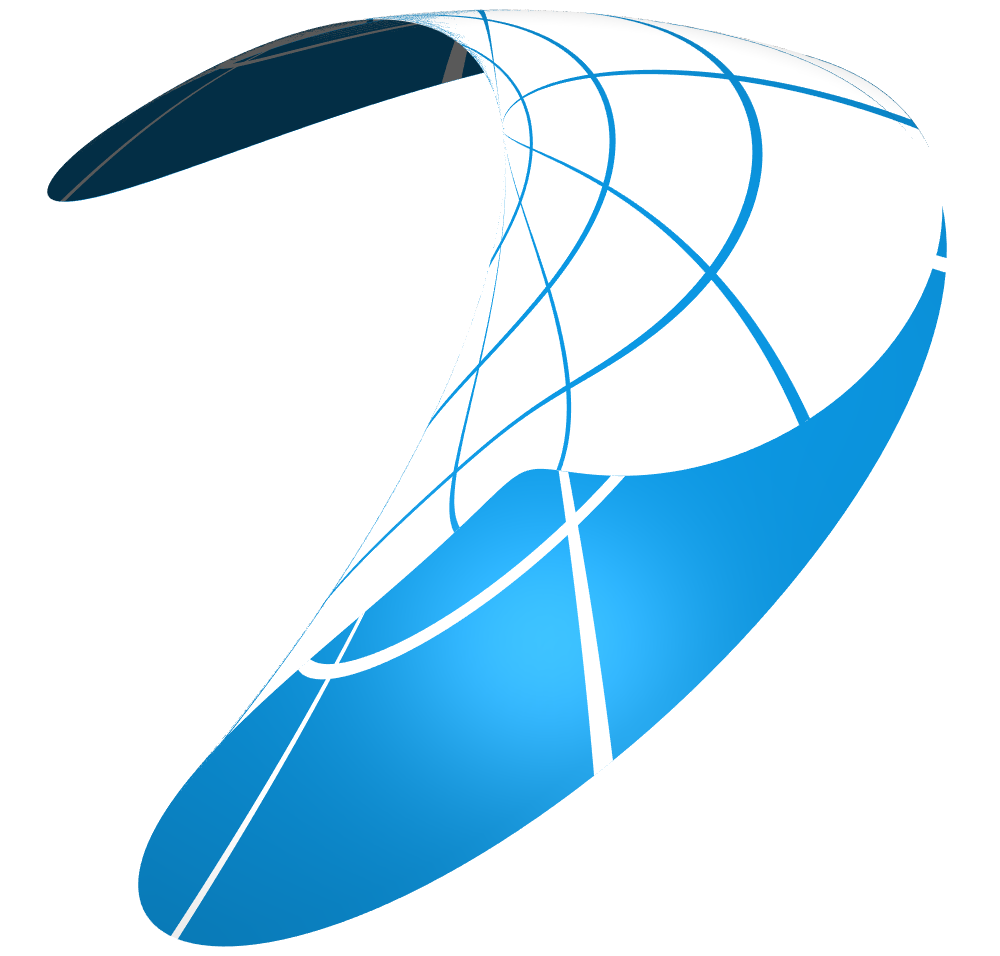}  \\
	b=0 &  b=t & b=1 & b=t^2
	\end{array}
	$
	\caption{
		Above: the spherical surface $f$ generated by non-degenerate singular curve data, with $b(t,0)$ as indicated.
		Below: the corresponding harmonic Gauss map $N$.
	}
	\label{figure_ex2}
\end{figure}

Let's consider the above in terms of the geometry of the curve $\alpha(x) = N(x,0)$ in $\SSS^2$.
Along the curve $\alpha$, the Darboux frame is $e_1 = N_x$, $e_2 = N \times N_x = -f_y$, $e_3 = N$,
and the Darboux equations reduce to (writing the geodesic curvature of $\alpha$ in $\SSS^2$ as
$\hat \kappa_g$),
\beqas
N_{xx} &=& \hat \kappa_g e_2 - e_3, \\
-\partial _x e_2 &=& f_{yx} = \hat \kappa_g N_x = \hat \kappa_g e_1.
\eeqas
For the non-degeneracy condition $a_y \neq 0$, we have $a = \langle f_x, e_1 \rangle$,
so 
\[
\frac{\partial a}{\partial y } = \langle f_{yx}, e_1 \rangle + \langle f_x, \frac{\partial e_1}{\partial y} \rangle.
\]
The first term on the right is $\langle \hat \kappa_g e_1, e_1 \rangle = \hat \kappa_g$, and for the second
compute (along $C$)
\beqas
\frac{\partial e_1}{\partial y} &=& \partial _y \left(\frac{1}{|N_x|} \right) e_1 + N_{yx} \\
&=& \partial _y \left (\frac{1}{|N_x|} \right) e_1 + b_x e_1 + b N_{xx},
\eeqas
so $\langle f_x, \partial _y e_1 \rangle = b^2 \hat \kappa_g$, and $a_y = (b^2+1) \hat \kappa_g$.
Hence the non-degeneracy condition is 
\[
\hat \kappa_g \neq 0.
\]

The speed of the curve $f(x,0)$, namely $b$, is arbitrary, and not related to the geometry of $\alpha$. Along $C$, we have
$N_y = b N_x$, and any choice of $b$ will generate a solution with $\alpha$ as a 
singular curve.  In terms of the curve $f(x,0)$, we have $f_x = b e_2$ along $C$, so $|b|$ is the speed of
this parameterization of $\gamma(x)=f(x,0)$. We also have $f_{xx} = b_x e_2 - \hat \kappa_g e_1$, so
the normal to $\gamma$ is $\bn = \pm e_1$ and the binormal along $\gamma$ is $\bb = \pm e_3$. 
Up to sign, the Frenet-Serret 
equations gives
\[
- \tau  e_1 = - \tau \bn = \frac{1}{|b|} \frac{\dd \bb}{\dd x} = \frac{1}{|b|}  (-N_x) = -\frac{1}{|b|}  e_1.
\]
Hence the torsion of the curve $f(x,0)$ is (up to a choice of sign)
\[
\tau = \frac{1}{b}.
\]

\subsection{Codimension 1 Singularities}
As in the previous section, if
the surface is a front, then either $f_x$ or $N_x$ is non-zero. In the case $f_x$ non-vanishing, as we
described above, the non-degeneracy condition is $\kappa \neq 0$.  In \cite{spherical}, it is shown that,
at a point where $\tau \neq 0$, then
$\kappa$ vanishes to first order if and only if the surface has a cuspidal beaks singularity at the point.
Here, the curve $C$ in the domain given by $y=0$ is contained in the singular set 
of $f$ but is not necessarily the whole singular set of $f$, as is the case at a non-transverse $A_3^-$.

To obtain both types of singularities (non-transverse  $A_3^-$ and $A_4$) from the same setup, we need to consider the case that
 $N_x$ is non-vanishing. Choose a frame $(e_1,e_2,e_3)=(r N_x, -rf_y, N)$ as above, with $r=1/|N_x|=1/|f_y|$ and $r(x,0)=1$,
$f_x=ae_1+be_2$ and $f_x \times f_y = \mu N$, with
\[
\mu= - a|f_y|, 
\]
and the null direction for $f$ is $\eta = \partial_x  + b \partial _y$.
By the recognition criteria in \cite{izumiyasaji,izsata}, a singularity of a front is diffeomorphic at $p$ to :
\begin{enumerate}
\item Cuspidal butterfly ($A_4$) if and only if $\hbox{rank}( \dd \mu) =1$, $\eta \mu(p) = \eta^2 \mu(p) = 0$ and \mbox{$\eta^3 \mu (p) \neq 0$};
\item Cuspidal beaks  (non-transverse $A_3^-$) if and only if $\hbox{rank}(\dd \mu) = 0$, $\det(\hbox{Hess} (\mu(p))) <0$ and $\eta^2 \mu(p)\neq 0$.
\end{enumerate} 

\begin{lemma}\label{lem:recog}
Let $f$ be as above. Then, at $p_0=(x_0,0)$:
\begin{enumerate}
\item   $f$ has a cuspidal butterfly singularity   if and only if $b(x_0,0) = b_x(x_0,0) = 0$ ,
   $b_{xx} (x_0,0) \neq 0$ and $\hat \kappa_g(p_0) \neq 0$.
\item $f$ has a cuspidal beaks singularity  if and only if  $\hat \kappa_g(p_0) = 0$, 
$\frac{\partial \hat \kappa_g}{\partial x}(p_0) \neq 0$ and $b(x_0,0) \neq 0$.
\end{enumerate}
\end{lemma}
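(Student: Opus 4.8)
The plan is to compute the invariant $\mu = -a|f_y|$ and its successive derivatives along the null direction $\eta = \partial_x + b\partial_y$ at the point $p_0$, then match these against the two recognition criteria quoted just above the lemma statement (from \cite{izumiyasaji,izsata}). All the geometric ingredients are already assembled in the preceding ``Case that $N(x,0)$ is a regular curve'' subsection: we have the frame $(e_1,e_2,e_3)=(rN_x,-rf_y,N)$ with $r(x,0)=1$, the relation $a_y=(b^2+1)\hat\kappa_g$ along $C=\{a=0\}$, and the Darboux equations $N_{xx}=\hat\kappa_g e_2-e_3$, $f_{yx}=\hat\kappa_g e_1$. Since the singular set is $\{a=0\}$ and the non-degeneracy already forces $a_y=(b^2+1)\hat\kappa_g$, the whole analysis will hinge on expressing $\eta\mu$, $\eta^2\mu$, $\eta^3\mu$ in terms of $b$, $\hat\kappa_g$ and their $x$-derivatives at $p_0$.

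\textbf{Step 1 (rank of $\dd\mu$).} First I would evaluate $\dd\mu$ at $p_0$. Since $\mu=-a|f_y|$ and $a=0$ along $C$ while $|f_y|(x,0)=1$, we get $\dd\mu|_{p_0}=-\,\dd a = -(a_x\,\dd x + a_y\,\dd y)$. Along $C$ one has $a_x=0$ (as $a\equiv 0$ there), so $\dd\mu=-a_y\,\dd y=-(b^2+1)\hat\kappa_g\,\dd y$. Hence $\rank(\dd\mu)=0$ exactly when $\hat\kappa_g(p_0)=0$, and $\rank(\dd\mu)=1$ when $\hat\kappa_g(p_0)\ne 0$. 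This single computation already splits the two cases: the butterfly criterion requires $\rank(\dd\mu)=1$, i.e. $\hat\kappa_g(p_0)\ne 0$, whereas the beaks criterion requires $\rank(\dd\mu)=0$, i.e. $\hat\kappa_g(p_0)=0$. This is the cleanest part and cleanly dichotomizes the lemma.

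\textbf{Step 2 (the directional derivatives).} For the butterfly case I would compute $\eta\mu$, $\eta^2\mu$, $\eta^3\mu$ at $p_0$ using $\eta=\partial_x+b\partial_y$. Because $\mu=-a|f_y|$ with $a=0$ on $C$, the leading behaviour of each $\eta^k\mu$ is governed by how $a$ varies off $C$, and the chain of derivatives should produce expressions whose vanishing/non-vanishing is controlled by $b$ and its $x$-derivatives. I expect $\eta\mu(p_0)=0$ to follow automatically once $\hat\kappa_g\ne 0$ is combined with the defining relations, and that $\eta^2\mu(p_0)$ and $\eta^3\mu(p_0)$ reduce, after using $a_y=(b^2+1)\hat\kappa_g$ and the Darboux relations, to nonzero multiples of $b$, $b_x$, and $b_{xx}$ respectively. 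Matching ``$\eta\mu=\eta^2\mu=0$, $\eta^3\mu\ne 0$'' then yields precisely $b(x_0,0)=b_x(x_0,0)=0$ and $b_{xx}(x_0,0)\ne 0$, together with the standing $\hat\kappa_g(p_0)\ne 0$. For the beaks case I would instead compute the Hessian of $\mu$ at $p_0$ and check $\det(\mathrm{Hess}\,\mu)<0$ and $\eta^2\mu(p_0)\ne 0$; since here $\hat\kappa_g(p_0)=0$, the Hessian's sign should be controlled by the $x$-derivative $\partial\hat\kappa_g/\partial x(p_0)$ (yielding the condition $\ne 0$), and $\eta^2\mu\ne 0$ should translate to $b(x_0,0)\ne 0$.

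\textbf{The main obstacle} I anticipate is the bookkeeping in Step 2: the higher directional derivatives $\eta^k\mu$ involve repeated application of $\partial_x+b\partial_y$, so one must carefully track the off-$C$ ($y$-)derivatives of $a$, $b$, $r$ and the frame vectors, not just their restrictions to $C$. In particular $\eta^2$ and $\eta^3$ generate cross terms like $b_y$, $a_{yy}$, and derivatives of $\hat\kappa_g$ transverse to $C$, and it will require care to show that the \emph{only} surviving contributions at $p_0$ are the stated ones in $b,b_x,b_{xx}$ (resp. $\hat\kappa_g,\partial_x\hat\kappa_g,b$). The Darboux equations and the identity $a_y=(b^2+1)\hat\kappa_g$, differentiated as needed, should close the computation, but isolating the nonzero leading coefficients so as to confirm that no further conditions are hidden is where the real work lies.
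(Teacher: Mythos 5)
Your overall strategy is the same as the paper's: restrict to the frame $(e_1,e_2,e_3)=(rN_x,-rf_y,N)$, use $\mu=-a|f_y|$ with singular set $\{a=0\}=\{y=0\}$ and the identity $a_y=(b^2+1)\hat\kappa_g$, and feed $\dd\mu$, $\mathrm{Hess}(\mu)$ and $\eta^k\mu$ into the recognition criteria. Your Step 1 is exactly the paper's dichotomy (degeneracy of $\dd\mu$ is equivalent to $\hat\kappa_g=0$), and your treatment of the beaks case coincides with the paper's: $\mu(x,0)\equiv 0$ gives $\mu_x=\mu_{xx}=0$ on $C$, so $\det\mathrm{Hess}(\mu)=-(\mu_{xy})^2=-(b^2+1)^2(\partial_x\hat\kappa_g)^2$ at a degenerate point, and $\eta^2\mu=2b(b^2+1)\partial_x\hat\kappa_g$ there. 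The one place you differ is item (1): the paper does not compute $\eta^k\mu$ at all but simply cites Theorem 4.8 of \cite{spherical}; your plan to derive it directly is legitimate and self-contained, and the bookkeeping you worry about is lighter than you fear, because $\mu\equiv 0$ on $C$ kills every pure $x$-derivative of $\mu$ there and leaves only $\mu_y=-(b^2+1)\hat\kappa_g$ and its $x$-derivatives.

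One concrete error to fix in Step 2: $\eta\mu(p_0)=0$ does \emph{not} hold automatically. Along $C$ one has $\eta\mu=\mu_x+b\mu_y=-b(b^2+1)\hat\kappa_g$, so when $\hat\kappa_g(p_0)\neq 0$ the condition $\eta\mu(p_0)=0$ is precisely equivalent to $b(x_0,0)=0$; this is where the first butterfly condition comes from, and your final matching (which correctly extracts $b=b_x=0$, $b_{xx}\neq 0$ from $\eta\mu=\eta^2\mu=0$, $\eta^3\mu\neq 0$) silently depends on it. With that sentence corrected and the $\eta^2\mu$, $\eta^3\mu$ computations actually carried out (they reduce to $b_x$ and $b_{xx}$ times nonzero factors once $b=0$, resp.\ $b=b_x=0$, is imposed), your argument closes.
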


\begin{proof}
The proof of the first item is given in \cite{spherical} (Theorem 4.8).
To prove the second item,
we have already shown that the degeneracy condition $\dd \mu = 0$ is equivalent to $\hat \kappa_g= 0$.
Here we have $\mu(x,0) = 0$ along $C$, so $\mu_{x}=\mu_{xx}=0$ along $C$. Noting that 
$|f_y|=1$ along $C$, and using the formula
$a_y = (b^2+1) \hat \kappa_g$ obtained earlier, we have
\[
\det (\hbox{Hess} (\mu(p))) = -(\mu_{xy})^2 = -\left(\frac{\partial}{\partial x}((b^2+1) \hat \kappa_g)\right)^2 
= -(b^2+1)^2\left(\frac{\partial \hat \kappa_g}{\partial x}\right)^2,
\]
at a degenerate point (i.e., a point where $\hat \kappa_g = 0$).  Thus the 
Hessian condition amounts to $\frac{\partial \hat \kappa_g}{\partial x} \neq 0$.
Finally, we have,
$
\eta^2 \mu = 2b (b^2+1) \frac{\partial \hat \kappa_g}{\partial x}$ along $C$, 
from which it follows that the three conditions for the cuspidal beaks are equivalent to  
$\hat \kappa_g(p_0) = 0$, $\frac{\partial \hat \kappa_g}{\partial x}(p_0)  \neq 0$ and $b(x_0,0) \neq 0$.
\end{proof}

\subsection{Codimension $\le 1$ singularities from geometric Cauchy data}
We show here how to obtain singularities of spherical frontals from Cauchy data along a regular 
curve in $\SSS^2$.
We  first re-state Theorem 4.8 from \cite{bjorling}, incorporating Lemma \ref{lem:recog}.

\begin{theorem}\label{gcptheorem}  {\rm(}\cite{bjorling}{\rm)}  
Let $I \subset \real$ be an open interval. Let $\hat \kappa_g: I \to \real$ and 
$b: I \to \real$ be a pair of real analytic functions.  Let $U \subset \C$ be a connected
open set containing the real interval $I \times \{0\}$ such that the functions $\hat \kappa_g$
and $b$ both extend holomorphically to $U$, and denote these holomorphic extensions 
by $\hat \kappa_g(z)$ and $\hat b(z)$.  Set
\[
\omega_{b,\hat\kappa_g} =\bbar 2 \hat \kappa_g(z)  i &  (-1-i b(z))\lambda^{-1} +(-1+i b(z))\lambda \\
   (1+i b(z))\lambda^{-1} +(1-i b(z))\lambda  &-  2 \hat \kappa_g(z) i  \ebar \dd z,
\]
Let $f: U \to \real^3$ and $N: U \to \SSS^2$ denote respectively the spherical frontal and 
its harmonic Gauss map produced from $\omega_{b,\hat\kappa_g}$ via the DPW method
of Section \ref{sec:DPW}.  Then $\kappa_g$ is
the geodesic curvature of the curve $N(x,0)$, and $b(x)=b(x,0)$ is the speed of the curve $f(x,0)$.
Moreover, the set $I \times \{0\}$ is contained in the singular set of $f$, and the singularity
at $(x_0, 0)$ is:
\begin{enumerate}
\item A cuspidal edge if and only if $b(x_0) \neq 0$ and $\hat \kappa_g (x_0) \neq 0$.
\item A swallowtail if and only if $b(x_0)=0$, $b^\prime(x_0) \neq 0$  and $\hat \kappa_g(x_0) \neq 0$.
\item A cuspidal butterfly if and only if $b(x_0) = b^\prime(x_0) =0$, $b^{\prime \prime}(x_0) \neq 0$,
and $\hat \kappa_g(x_0) \neq 0$.
\item A cone point if and only if $b(x,0) \equiv 0$ and $\hat \kappa_g(x_0) \neq 0$.
\item A cuspidal beaks if and only if 
$\hat \kappa_g(x_0) = 0$, 
$\frac{\partial \hat \kappa_g}{\partial x}(x_0) \neq 0$ and $b(x_0) \neq 0$.
\end{enumerate}
Conversely, all singularities of the types listed can locally be constructed in this way.
\end{theorem}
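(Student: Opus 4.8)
The plan is to prove Theorem \ref{gcptheorem} by combining the DPW machinery of Section \ref{sec:DPW} with the recognition criteria assembled in Lemma \ref{lem:recog}. The key observation is that the potential $\omega_{b,\hat\kappa_g}$ has exactly the form of a holomorphic potential with $\lambda^{-1}$-coefficient functions $b_0(z) = (-1-ib(z))$ and $c_0(z) = (1+ib(z))$. Since $|b_0(z)| = |c_0(z)| = \sqrt{1+b(z)^2}$ for real $z=x$, Lemma \ref{dpwlemma2} shows immediately that $N$ (and hence $f$, since $\rank(\dd f)=\rank(\dd N)$) fails to be immersed precisely along the real axis $I \times \{0\}$, giving the claim that $I \times \{0\}$ lies in the singular set. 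First I would verify this, and simultaneously check that along $C$ we never have $b_0=c_0=0$, so that $\rank(\dd N)_p = 1$ everywhere on the singular curve and the degenerate rank-zero case is excluded.

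Next I would identify the geometric data. Using Lemma \ref{dpwlemma1}, the frame $\hat F$ and hence $N$ and $f$ are determined to the relevant jet order by the potential, and the constructions in Section \ref{sec:constructionSS} show how the frame along $C$ is read off from the lowest-order term. The plan is to confirm that with this potential the induced frame $(e_1,e_2,e_3)$ along $C$ coincides with the Darboux frame $(rN_x,-rf_y,N)$ used in the ``$N(x,0)$ regular'' subsection, that the off-diagonal real part $\hat\kappa_g(z)$ restricts on the real axis to the geodesic curvature of $\alpha(x)=N(x,0)$, and that $b(x,0)$ is the speed of $f(x,0)$ as asserted. These two identifications are exactly the hypotheses under which Lemma \ref{lem:recog} and the cuspidal edge/swallowtail/cone-point analysis of the ``$N(x,0)$ regular'' subsection were carried out, so items (1)--(5) then follow by quoting those results verbatim: items (1)--(3) and (4) from the swallowtail/cone-point list and Theorem 4.8 of \cite{spherical}, and item (5) from part (2) of Lemma \ref{lem:recog}.

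For the converse direction I would argue that any spherical frontal exhibiting one of the listed singularities at a point can be reconstructed from such Cauchy data. Since all harmonic maps arise from the DPW method, and since by the rank analysis any codimension $\le 1$ singularity has $\rank(\dd N)=1$ there, one may choose conformal coordinates as in Section \ref{sec:constructionSS} so that the singular set is $\{y=0\}$ and $N(x,0)$ is a regular curve. Extracting its geodesic curvature $\hat\kappa_g(x)$ and the speed function $b(x)$ of $f(x,0)$, and extending both holomorphically, yields a potential of the prescribed form whose output matches the given surface to the order determined by Lemma \ref{dpwlemma1}; the recognition criteria then guarantee the same singularity type.

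The main obstacle I expect is the second step: the precise bookkeeping needed to show that the real part of the off-diagonal entry of $\omega_{b,\hat\kappa_g}$ restricts to the geodesic curvature $\hat\kappa_g$ of $N(x,0)$, and that $b$ is the speed of $f(x,0)$. This requires tracing how the Iwasawa factor $B(z,\lambda)$, and in particular the positive function $\rho$ with $\rho(z_0)=1$, enters the frame, then differentiating $N=\Ad_F e_3$ and matching against the Darboux equations $N_{xx}=\hat\kappa_g e_2 - e_3$ derived earlier. Because $\rho \equiv 1$ only at the base point, some care is needed to confirm the off-diagonal normalization is consistent along the whole curve $C$; this is the calculation underlying Theorem 4.8 of \cite{bjorling}, and I would lean on that reference to supply the detailed frame computation rather than reproduce it.
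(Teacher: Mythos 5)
Your proposal matches the paper's treatment: the paper gives no independent proof of this theorem, presenting it explicitly as a restatement of Theorem 4.8 of \cite{bjorling} augmented by Lemma \ref{lem:recog}, which is exactly the combination you describe (DPW potential with $b_0=-1-ib$, $c_0=1+ib$, the frame/geodesic-curvature identification deferred to \cite{bjorling}, and the case distinctions read off from Lemma \ref{lem:recog} and the ``$N(x,0)$ regular'' analysis). Your one caveat --- that Lemma \ref{dpwlemma2} alone controls the singular set only at the integration point and that $\rho\equiv 1$ along all of $I\times\{0\}$ must come from the detailed computation in \cite{bjorling} --- is correctly identified and correctly delegated to that reference, just as the paper does.
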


We can now prove the main results about the realization of the generic bifurcations of parallel surface by families of spherical surface at the cuspidal butterfly and cuspidal beaks singularities.

\begin{figure}[ht]
	\centering
	$
	\begin{array}{ccc}
	\includegraphics[height=27mm]{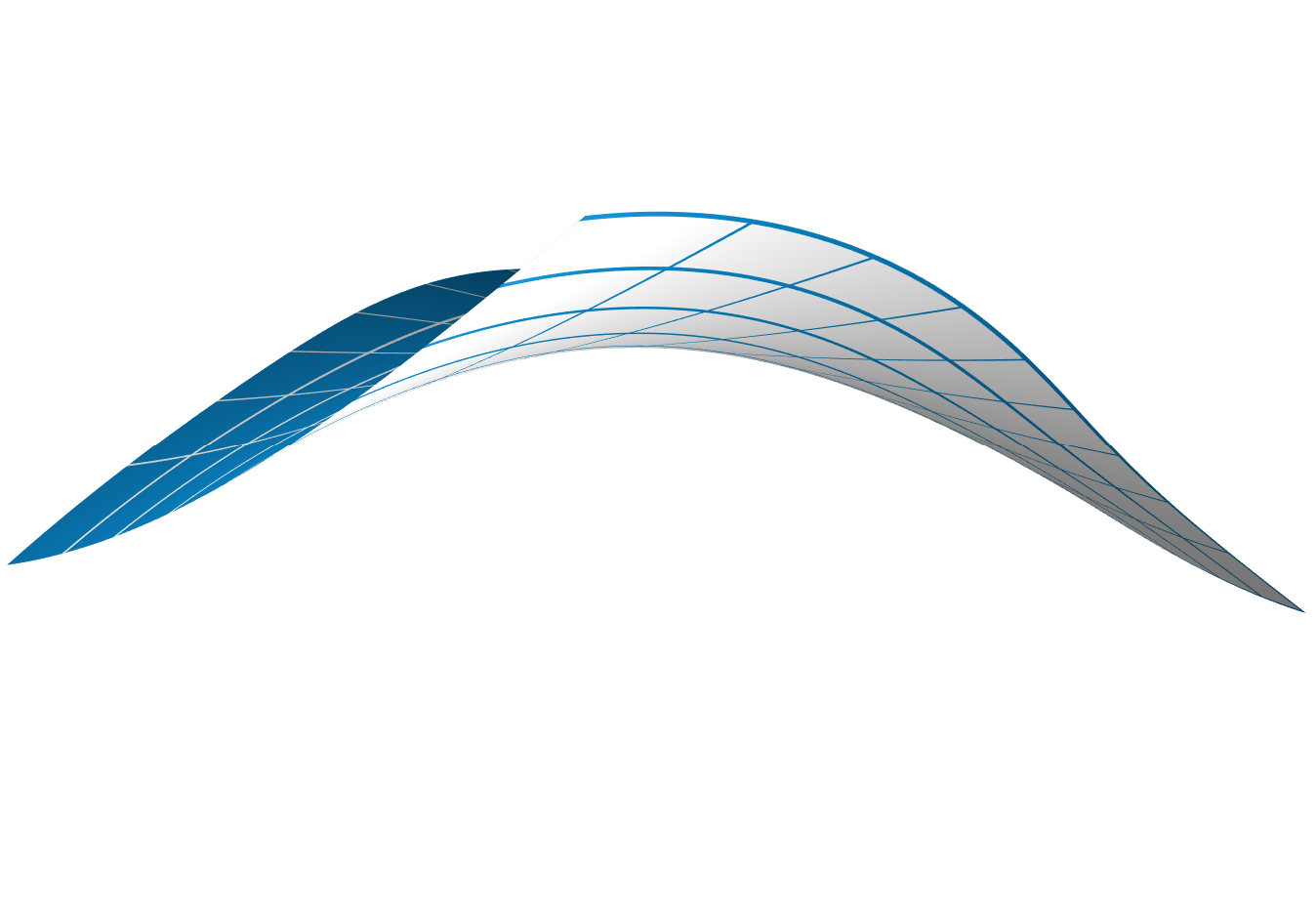}  \, & \, 
\includegraphics[height=27mm]{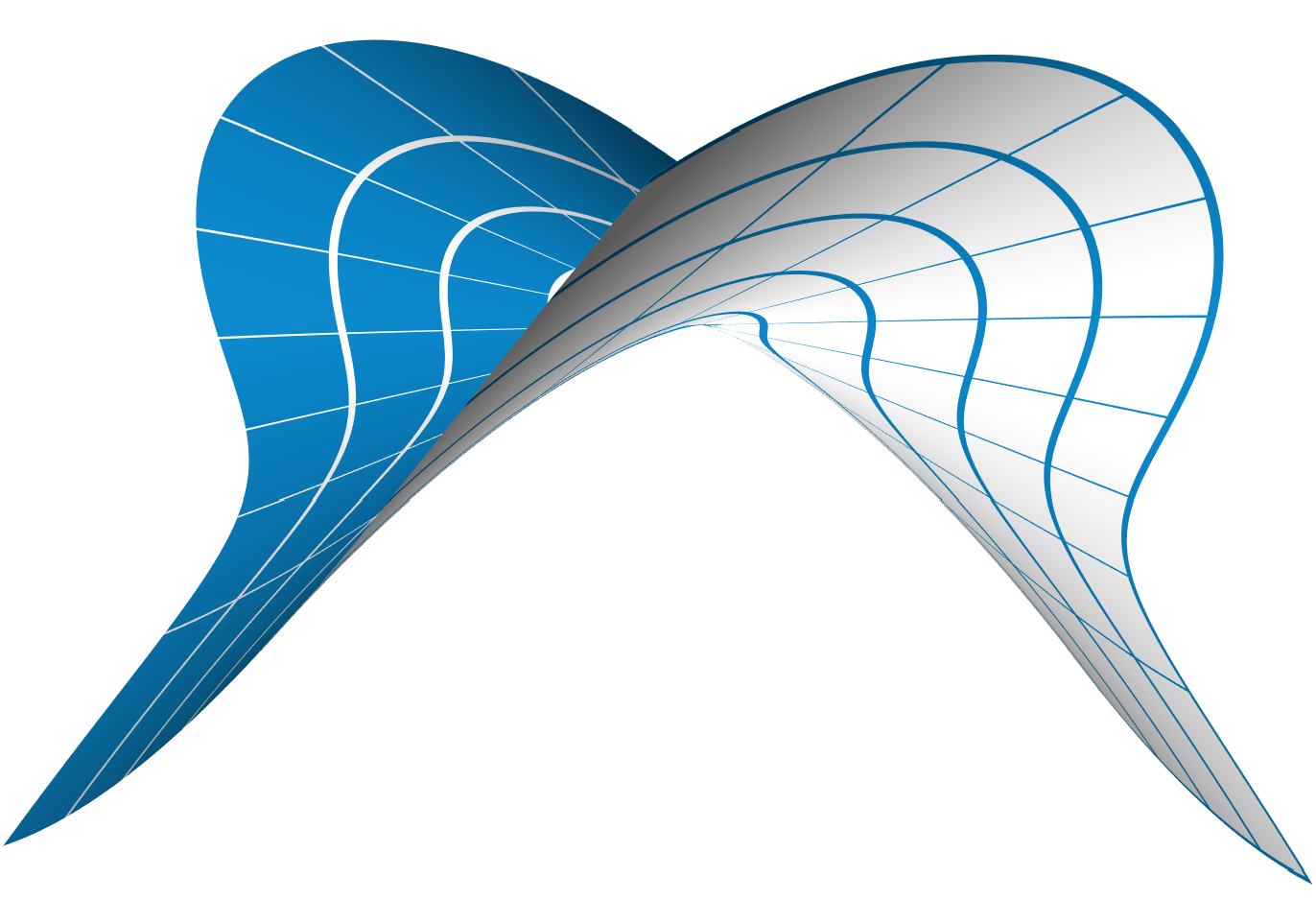}  \,& \, 
\includegraphics[height=27mm]{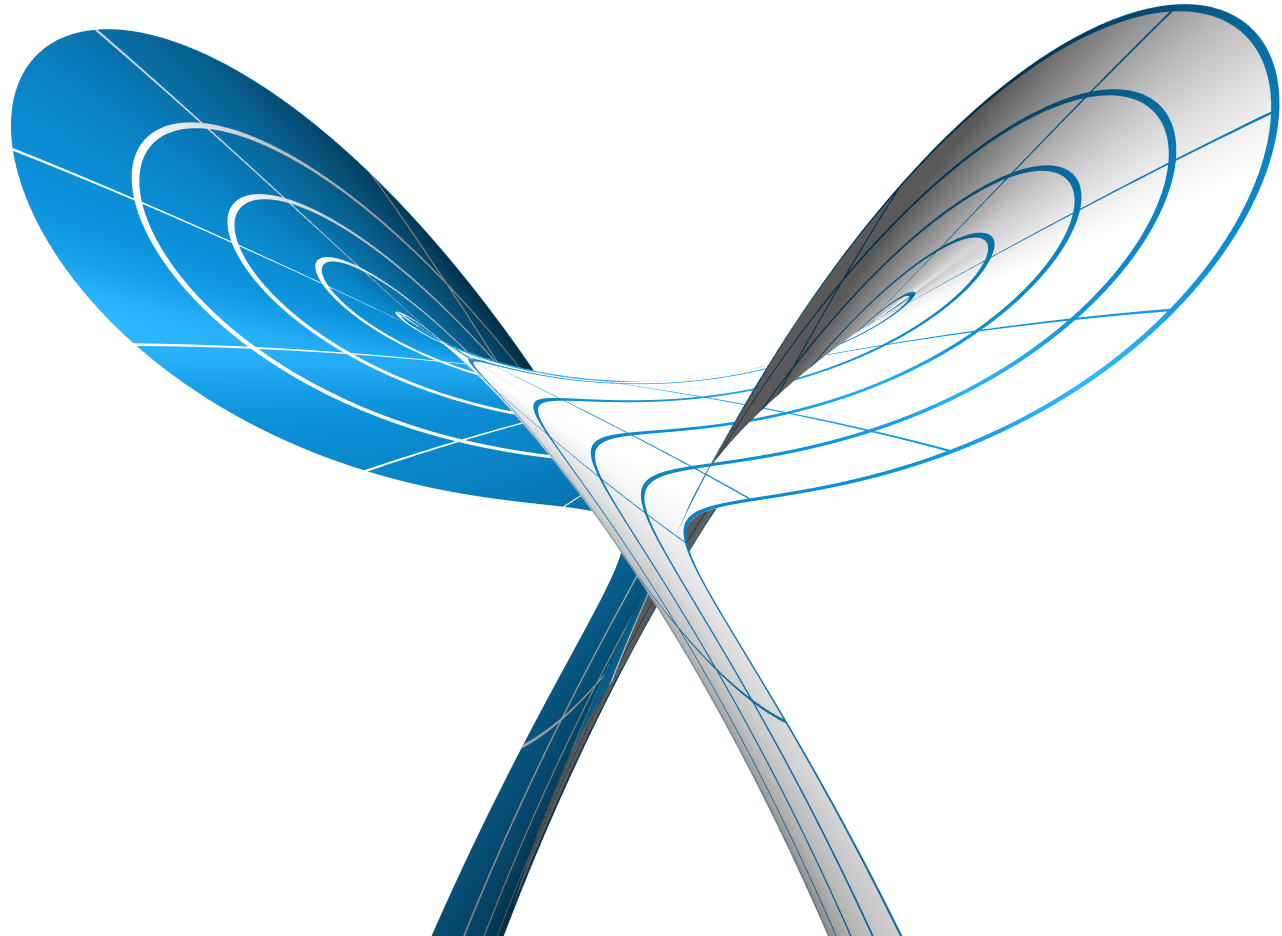}  
	\end{array}
	$
	\caption{Cuspidal butterfly bifurcation}
	\label{fig:butterfly_bif}
\end{figure}
\begin{theorem}\label{theo:evolA4}
	The evolution of wave fronts at an $A_4$ (butterfly) is realized in a
	1-parameter family of spherical surfaces $f_s(x,y) \equiv f(x,y,s)$ obtained from 
	Theorem \ref{gcptheorem} with the potentials:
	\[
\omega_{b_s,\hat\kappa_g}, \quad b_s(x)=s+x^2, \quad \hat \kappa_g(x)=1.
\]
\end{theorem}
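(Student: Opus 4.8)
The plan is to verify that the family $f_s$ realizes the full generic $A_4$ bifurcation of wave fronts (Figure \ref{fig:familiesSS}, third column) by combining the recognition data of Lemma \ref{lem:recog} with the versality criterion of Theorem \ref{theo:GemCritVersA4}. First I would observe that for each fixed $s$ the potential $\omega_{b_s,\hat\kappa_g}$ produces, via Theorem \ref{gcptheorem}, a spherical frontal $f_s$ whose singular set contains $I \times \{0\}$, with $\hat\kappa_g \equiv 1 \neq 0$ and speed function $b_s(x)=s+x^2$. The parameter-wise singularity type is then read off directly from items (1)--(3) of Theorem \ref{gcptheorem}: at $s=0$ the point $x_0=0$ has $b_0(0)=b_0'(0)=0$ and $b_0''(0)=2 \neq 0$, so $f_0$ has a cuspidal butterfly ($A_4$) at the origin, while for $s \neq 0$ the zeros of $b_s$ (and its derivative) move, producing the swallowtails and cuspidal edges that are the expected nearby strata. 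This confirms that the central fiber carries the $A_4$ and that the family is genuinely a deformation of it.

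The substantive step is to show that this deformation is the \emph{generic} (versal) one, not an arbitrary unfolding. Here I would pass from the frontal picture to the distance-squared-function picture of Remark \ref{rem:d2familiesCMC}: write $f_s$ as a parallel of the associated CMC family $g_s$, form the $4$-parameter family $F((x,y),q,s)$ of Remark \ref{rem:d2familiesCMC}, and apply Theorem \ref{theo:GemCritVersA4}. By that theorem it suffices to check that the $S_{A_3}$ set is a regular curve at $p_0$ and that its projection to the $(x,y)$-domain is regular. The natural way to do this without grinding through the Monge-form coefficients $a_{i,j},\beta_{i,j,k}$ is to exploit that the $S_{A_3}$ stratum, being the set where $f_s$ has an $A_{\geq 3}$ singularity, is exactly the locus where $b_s$ and $b_s'$ vanish simultaneously, i.e. $\{s+x^2=0,\ 2x=0\}$ lifted to the $(x,y,s)$-space along the singular curve. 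I would parametrize $S_{A_3}$ directly from the conditions $b_s(x)=b_s'(x)=0$, verify that $ds \wedge dx$ is nonzero along it (which follows because $\partial b_s/\partial s = 1 \neq 0$ makes $s=-x^2$ a graph over $x$), and hence that $S_{A_3}$ is a regular curve whose projection to the domain is regular in $x$.

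The main obstacle is the translation between the two recognition frameworks: Theorem \ref{theo:GemCritVersA4} is phrased in terms of the $S_{A_k}$ strata of the distance-squared function $F$ on a general surface family, whereas the construction delivers $f_s$ intrinsically through $(b,\hat\kappa_g)$-data and the DPW machine. I must argue that the $S_{A_3}$ curve of $F$ coincides (as a germ, up to the diffeomorphism identifying the parallel with the frontal) with the locus carved out by the vanishing of $b_s$ and $b_s'$, and that the non-umbilicity and $A_{\geq 2}$ normalizations assumed in Theorem \ref{theo:GemCritVersA4} hold here because $\hat\kappa_g \equiv 1 \neq 0$ guarantees non-degeneracy of the singular point throughout the family. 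Once this identification is in place, the genericity of the sections in the Arnold sense follows from the regularity of the projection of $S_{A_3}$, exactly as in the final paragraph of the proof of Theorem \ref{theo:GemCritVersA4}, and the theorem follows. A secondary technical point to dispatch is checking that the chosen $\omega_{b_s,\hat\kappa_g}$ indeed satisfies the holomorphic-extension hypotheses of Theorem \ref{gcptheorem}, which is immediate since $b_s(x)=s+x^2$ and $\hat\kappa_g=1$ are polynomials and extend to entire functions of $z$.
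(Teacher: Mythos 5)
Your overall route is the same as the paper's: establish the cuspidal butterfly at $s=0$ via Theorem \ref{gcptheorem}, then verify the hypotheses of Theorem \ref{theo:GemCritVersA4} by showing that $S_{A_3}$ is a regular curve whose projection to the $(x,y)$-domain is regular. The paper does exactly this, asserting a parameterization of $S_{A_3}$ of the form $(x,0,q(x),\cdot)$ and reading off both regularity conditions at once. Your identification of the ``translation'' between the $(b,\hat\kappa_g)$-data and the $S_{A_k}$ strata of the distance-squared family as the main point needing justification is apt, and your remark on holomorphic extendability is fine (the paper does not bother with it).

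There is, however, a concrete error in your description of $S_{A_3}$. You write that the $A_{\ge 3}$ stratum is ``the locus where $b_s$ and $b_s'$ vanish simultaneously, i.e.\ $\{s+x^2=0,\ 2x=0\}$.'' That set is the single point $(x,s)=(0,0)$ --- it is the $A_{\ge 4}$ locus, not the $A_{\ge 3}$ locus --- and a single point is not a regular curve, so taken literally your identification would make the versality criterion of Theorem \ref{theo:GemCritVersA4} fail. By Theorem \ref{gcptheorem}, the swallowtail (i.e.\ $A_{\ge 3}$) condition at $(x_0,0)$ is $b_s(x_0)=0$ with $b_s'(x_0)\neq 0$, so the $A_{\ge 3}$ stratum is cut out by the single equation $b_s(x)=0$ along $y=0$, namely $\{s=-x^2,\ y=0\}$, a regular curve graphed over $x$ whose projection $x\mapsto(x,0)$ to the domain is regular. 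Your subsequent computation silently drops the spurious condition $b_s'=0$ and works with $s=-x^2$, so you land on the correct object and the correct conclusion; but the defining equations you state for $S_{A_3}$ must be corrected, since the whole argument rests on $S_{A_3}$ being one-dimensional. With that repair, your argument matches the paper's proof.
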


\begin{proof}
We use the DPW-method in \S \ref{sec:DPW} 
to construct the 1-parameter family of spherical surfaces $f_s$ with potential as stated in the theorem. The surface $f_0$ has a butterfly singularity by Theorem \ref{gcptheorem}. 
The set $S_{A_3}$ associated to the family of distance squared functions on $f_s$ 
has a parameterization in the form $(x,0,q(x),-2x)$, so is a regular curve and its projection to the first two coordinates is also a regular curve. Then the result follows by Theorem \ref{theo:GemCritVersA4}.
\end{proof}

 The spherical surface $f_s$ in Theorem \ref{theo:evolA4} is computed numerically
 for three values of $s$ close to zero and shown in Figure \ref{fig:butterfly_bif}.
  
\begin{theorem}\label{theo:evolA3-}
	The evolution of wave fronts at a non-transverse $A_3^{-}$ 
	(cuspidal beaks) is realized in a
	1-parameter family of spherical surfaces $f_s$ obtained from 
	Theorem \ref{gcptheorem} with the potentials:
	\[
\omega_s = \omega_{b,\hat\kappa_{g}} + \alpha_s,
 \quad \quad b(x) = 1, \quad \hat \kappa_{g}(x) = x, \quad
 \alpha_s = \bbar 0 & s \lambda^{-1} \\ 0 & 0 \ebar \dd z.
\]
\end{theorem}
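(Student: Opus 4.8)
The plan is to mirror the structure of the proof of Theorem \ref{theo:evolA4}, but now checking the geometric conditions of Theorem \ref{theo:recogCuspidalLipsBeaks} instead of Theorem \ref{theo:GemCritVersA4}. First I would use the DPW method of \S\ref{sec:DPW} to construct the family $f_s$ from the stated potential $\omega_s = \omega_{b,\hat\kappa_g} + \alpha_s$, with $b(x)=1$, $\hat\kappa_g(x)=x$. At $s=0$ the potential is exactly $\omega_{b,\hat\kappa_g}$, so Theorem \ref{gcptheorem}(5) applies directly: since $\hat\kappa_g(0)=0$, $\frac{\partial \hat\kappa_g}{\partial x}(0)=1\neq 0$, and $b(0)=1\neq 0$, the surface $f_0$ has a cuspidal beaks (non-transverse $A_3^-$) singularity at the origin. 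The role of the perturbation term $\alpha_s$ is to move the integration off this singular configuration in a transverse way as $s$ varies, and the substance of the proof is to verify that this produces the generic Morse transition of Figure \ref{fig:familiesSS}, second column.

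Next I would invoke Theorem \ref{theo:recogCuspidalLipsBeaks}(3): it suffices to show that (a) the surface swept out by the singular sets of $W_{r(s)}$ in $(x,y,s)$-space is a regular surface, and (b) its sections by the planes $s=\text{const}$ undergo the generic Morse transitions near $s=0$. For (a), by part (1) of that theorem the regularity is equivalent to the projection $\pi$ being a submersion, which in the Monge-form computation reduces to the single inequality $2a_{0,2}^3 r'(0)-\beta_{0,2,0}\neq 0$. The key point is that the coefficient $\beta_{0,2,0}$ records how the $y^2$-coefficient of the surface height function varies with $s$, and this is controlled by $\alpha_s$; I would use Lemma \ref{dpwlemma1} to compute the relevant jets of $N$ (hence of the surface $g_s$ underlying $f_s$ as a parallel) from the $(k-1)$-jet of the potential, reading off that the $s$-derivative term coming from $\alpha_s$ is precisely what makes this quantity nonzero.

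For (b) I would describe the singular set of $W_{r(0)}$ in the domain: by the computation in the proof of Theorem \ref{theo:recogCuspidalLipsBeaks}(2), the Morse type of this singular set is governed by the discriminant $\Lambda$ of the quadratic form $Q$ in \eqref{eq:2jetSA2s=0}, and the beaks ($A_3^-$, as opposed to lips $A_3^+$) corresponds to the sign of $\Lambda$ that gives an $A_1^-$ (saddle) Morse singularity; the choice $\hat\kappa_g(x)=x$ with $\frac{\partial\hat\kappa_g}{\partial x}\neq 0$ is exactly what forces the singular set in the domain to have the nondegenerate Morse (crossing) singularity required by Lemma \ref{lem:recog}(2). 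I expect the main obstacle to be the bookkeeping that translates the abstract Monge-form coefficients $a_{i,j}$ and $\beta_{i,j,k}$ of Theorem \ref{theo:recogCuspidalLipsBeaks} into the loop-group data $b$, $\hat\kappa_g$, and the perturbation parameter in $\alpha_s$. Since the DPW construction only gives the frame and surface implicitly through an Iwasawa decomposition, the honest work is extracting enough of the Taylor expansion of $g_s$ at the singular point to evaluate both the submersion inequality and the Morse discriminant simultaneously; I would either carry this out via the jet formula of Lemma \ref{dpwlemma1} or, as in Theorem \ref{theo:evolA4}, by exhibiting an explicit parameterization of the $S_{A_2}$ and $S_{A_3}$ sets and reading the conclusion off their $2$-jets.
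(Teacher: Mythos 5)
Your overall strategy---establish the cuspidal beaks at $s=0$ via Theorem \ref{gcptheorem}(5) and then verify the hypotheses of Theorem \ref{theo:recogCuspidalLipsBeaks}(3)---is exactly the paper's strategy, and your observation that (b) reduces to the Morse singularity of the $s=0$ section (already guaranteed by the beaks condition) once (a) holds is sound. The gap is in (a): you never actually verify the transversality condition, you only assert that the $\beta_{0,2,0}$-type term ``coming from $\alpha_s$ is precisely what makes this quantity nonzero.'' This is the entire content of the theorem---one must show that this \emph{particular} perturbation $\alpha_s$ moves the singular set transversally, and your proposed route (pass to the underlying CMC surface $g_s$, put it in Monge form, and extract the coefficients $a_{i,j}$ and $\beta_{i,j,k}$ through the Iwasawa decomposition via Lemma \ref{dpwlemma1}) is not carried out and would be quite painful to carry out, since the DPW construction gives $g_s$ only implicitly. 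Without the computation, nothing in your argument distinguishes the stated $\alpha_s$ from a perturbation that fails (say, one entering at order $\lambda^{+1}$, which would not affect $b_0$ at all).

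The idea you are missing is Lemma \ref{dpwlemma2}: the singular set of $f_s$ equals that of $N_s$ and is cut out by the explicit equation $\rho^2|b_0| = \rho^{-2}|c_0|$, where $b_0, c_0$ are the lowest-order coefficients of the potential and $\rho$ is the diagonal Iwasawa factor normalized by $\rho(0,0,s)=1$ at the integration point. For the stated potential, $b_0(z,s)=s-1-i$ and $c_0(z,s)=1+i$, so the surface of singular sets is $h^{-1}(0)$ with $h(x,y,s)=\rho^4(x,y,s)-\sqrt{2}/\sqrt{(s-1)^2+1}$; since $\rho(0,0,s)\equiv 1$, the $s$-derivative of $\rho^4$ vanishes at the origin while that of the second term does not, giving $\partial h/\partial s(0,0,0)\neq 0$ in one line. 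This bypasses the Monge-form bookkeeping entirely and is the step your proposal leaves unproven.
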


\begin{proof}
The DPW method described in Section \ref{sec:DPW} produces a family of surfaces $f_s$ from 
the potentials $\omega_s$, and the construction is real analytic in all parameters. 
Using the same notation as in Section \ref{sec:DPW}, we have  here
\[
b_0(z,s) = s-1-i, \quad \quad c_0(z,s) = 1+i.
\]
The  singular set, as noted before Lemma \ref{dpwlemma2}, is given at each $s$ by the equation
$\rho^2(x,y,s)|b_0(z,s)| = \rho^{-2}(x,y,s)|c_0(z,s)|$, i.e., by
\[
\rho^2(x,y,s)\sqrt{(s-1)^2+1} = \rho^{-2}(x,y,s)\sqrt{2}.
\]
Since we have a cuspidal beaks singularity at $s=0$, it follows from 
Theorem \ref{theo:recogCuspidalLipsBeaks} that the family $f_s$ realizes the
	evolution of the cuspidal beaks if and only if the set $h^{-1}(0)$, with
	\[
h(x,y,s) := \rho^4(x,y,s)-\frac{\sqrt{2}}{\sqrt{(s-1)^2+1} },
	\]
	is a regular surface in $\real^3$ in a neighbourhood of the point $(x,y,s)=(0,0,0)$, i.e.,
	$\dd h(0,0,0) \neq (0,0,0)$. But, in the DPW construction, where the integration point is
	$(x,y)=(0,0)$ for all $s$, we have $\rho(0,0,s)=1$ for all $s$, which implies that
	$\frac{\partial h}{\partial s} \neq 0$ at $(0,0,0)$, and the claim follows.
\end{proof}
Three solutions for $s$ close to zero are shown in Figure \ref{fig:beaks_bif}.
\begin{figure}[ht]
	\centering
	$
	\begin{array}{ccc}
	\includegraphics[height=30mm]{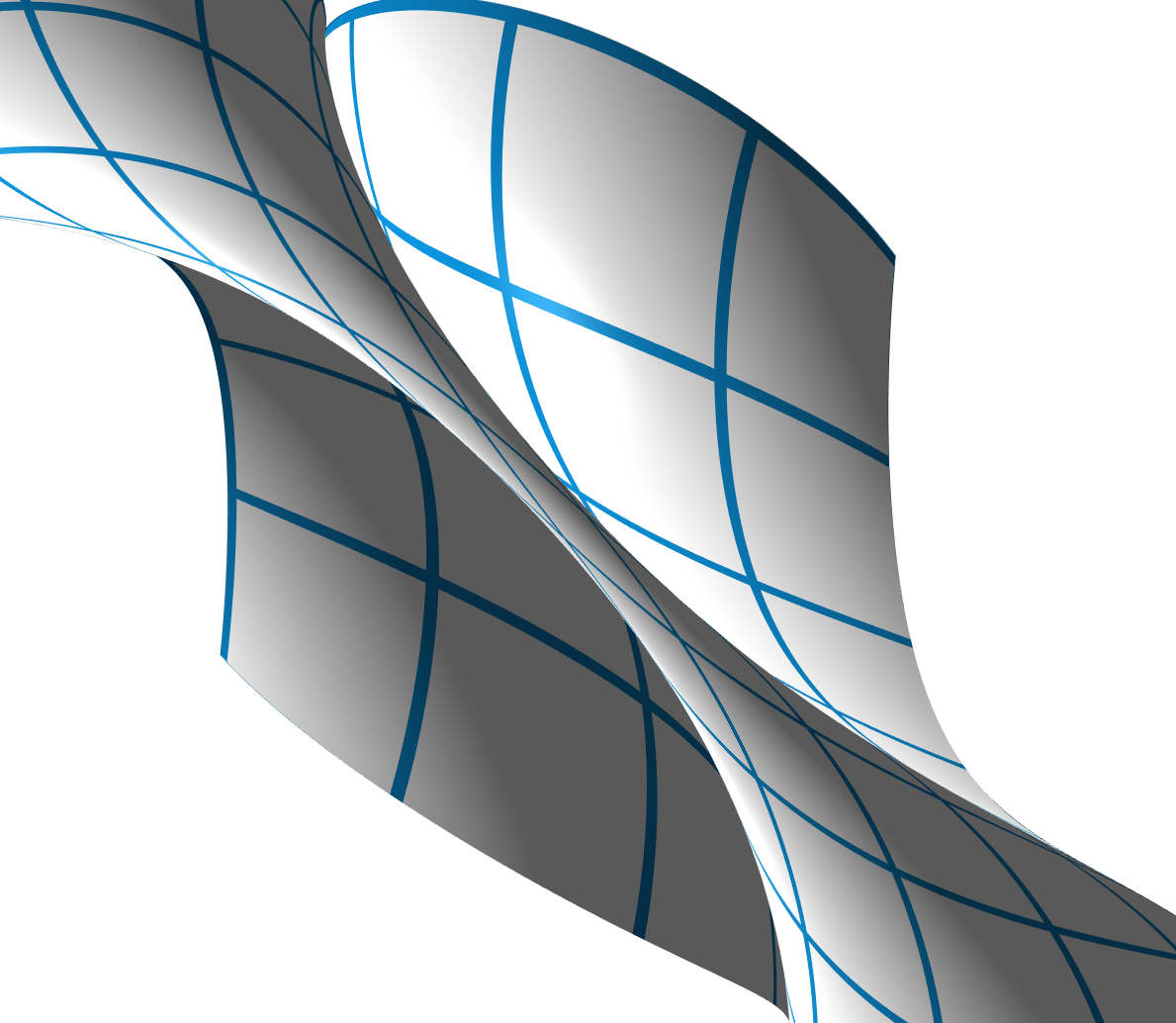}    \,\, \, & \, \, \,
\includegraphics[height=30mm]{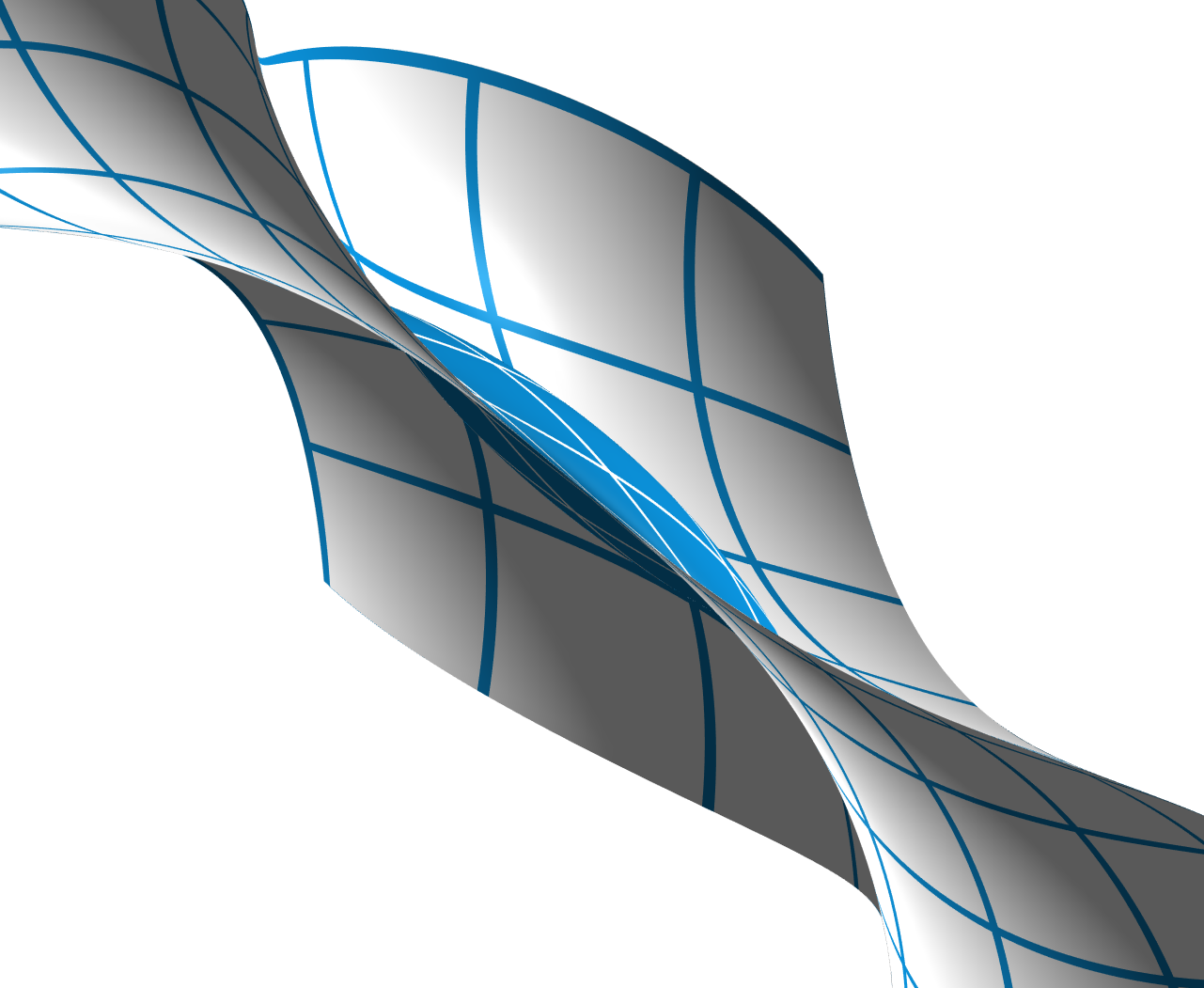}  \,\, \, & \, \, \,
	\includegraphics[height=30mm]{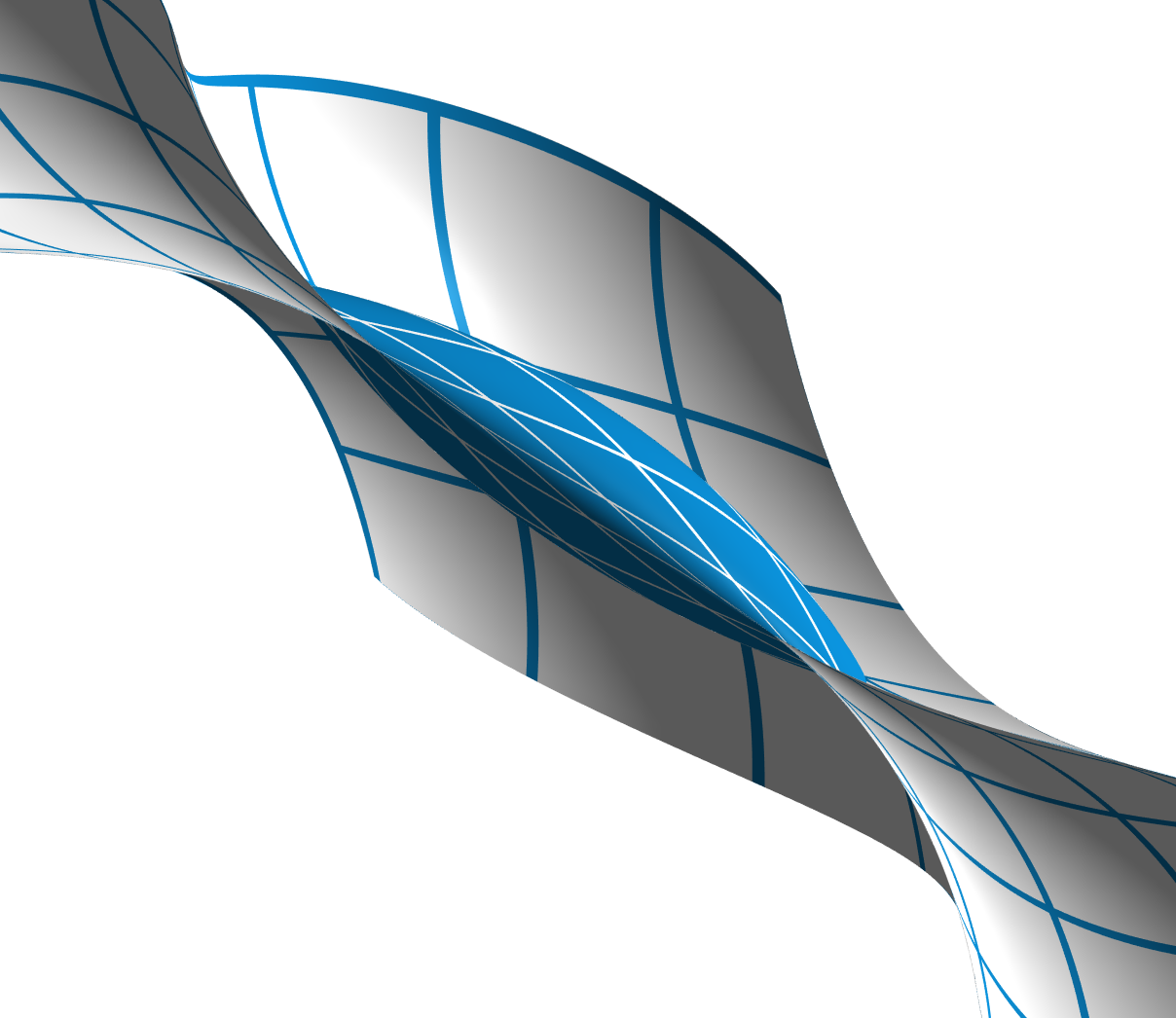}
	\end{array}
	$
	\caption{Cuspidal beaks bifurcations. }
	\label{fig:beaks_bif}
\end{figure}

\begin{remark} 
Theorem \ref{theo:PossibCodm1SingSS} lists the possible codimension 1 phenomena 
		of generic wave fronts that may occur in families of spherical surfaces. Theorems \ref{theo:evolA3-} and \ref{theo:evolA4} show that these do indeed occur. These bifurcations exhaust all possible bifurcations in generic 1-parameter families of spherical surfaces.
		In fact, given a $z_0 \in \C$, a harmonic map from an open  neighbourhood of $z_0$ into $\SSS^2$ is uniquely 
		determined by an arbitrary pair of holomorphic functions $(b_{-1}(z), c_{-1}(z))$, the so-called normalized potential
		at the point $z_0$ (see, e.g. \cite{DorPW}).
		An instability of a spherical surface $f$ is thus expressed in terms of the pair of  $(b_{-1}(z), c_{-1}(z))$.
		If further conditions 
		are imposed on $a_{-1}$ and $b_{-1}$, then these lead to codimension $\ge 2$ instabilities. 
\end{remark}

\section{Singularities of germs of harmonic maps from the plane to the plane}\label{sec:SingHarmMap} 
Wood \cite{wood1977} defined the following concepts for harmonic maps 
$N:D\to E$ between two Riemannian surfaces.
Denote by $\Sigma$ the singular set of $N$ and let $p\in \Sigma$.
The set $\Sigma$ is the zero set of the function 
$J=\det(\dd N)_p$. 
The point $p$ is called a {\it degenerate point} if 
$J$ vanishes identically in a neighbourhood of $p$.
It is called a {\it good} point
if $J$ is a regular function at $p$ (i.e., if $\Sigma$ is a 
regular curve at $p$). Then $(\dd N)_p$ has rank $1$. 
Wood classified the singularities of $N$ as follows: 

\begin{itemize}
	\item[(1)]$\rank(\dd N)_p=1$
	\begin{itemize}
		\item[(A)] {\it fold point} if $\nabla_{\sigma}N\ne 0$ at $p$ where 
		$\sigma$ is a tangent direction to $\Sigma$.
		
		\item[(B)] {\it collapse point} if $\nabla_{\sigma}N\equiv 0$ locally on 
		$\Sigma$.
		
		\item[(C)] {\it good singular point of higher order}
		if $\nabla_{\sigma}N= 0$ at $p$ but not identically zero 
		on $\Sigma$.
		
		\item[(D)] {\it $C^1$-meeting point of $s$ general folds} 
		if the singular set consists of pairwise transverse intersections of $s$ regular curves. 
	\end{itemize}
	
	\item[(2)] $\rank(\dd N)_p=0$: $p$ is called a {\it branch point}. 
\end{itemize}

\begin{theorem}{\rm (}\cite{wood1977}{\rm )} \label{theo:wood}
	Let $N: D\to E$ be a harmonic mapping between Riemannian surfaces.
	Then, if $p$ is a singular point for $N$, one of the following holds:
	\begin{enumerate}
	\item  $N$ is degenerate at $p$. Then: $N$ 
	must be degenerate on its whole domain and either (1A) $N$  is a constant mapping (so $\dd N\equiv 0$) or (IB) $\dd N$ 
	is nonzero except at isolated points of the domain.
	
\item $p$ is a good point: it may be (2A) fold point,  (2B) 
	collapse point or (2C) good singular point of higher order.
	
	\item $p$ is a $C^1$-meeting point 
	of an even number of 
	general folds. The general folds are arranged at equal 
	angles with respect to a local conformal coordinate system.
	
\item $p$ is a branch point.
\end{enumerate}
\end{theorem}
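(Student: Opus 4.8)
The plan is to work in local isothermal coordinates and reduce the whole classification to the nodal-set analysis of a single real-analytic function solving a linear elliptic equation. First I would fix a conformal coordinate $z=x+iy$ on $D$ and a conformal coordinate $w$ on $E$ with target metric $\rho^2|dw|^2$, and write $N$ as $w(z,\bar z)$. Since harmonicity in two domain dimensions is conformally invariant, the domain metric is irrelevant and the harmonic map equation becomes $w_{z\bar z}=-2(\log\rho)_w\,w_z w_{\bar z}$; by elliptic regularity $N$ is real analytic. I would record the derivatives $a:=w_z$ and $b:=w_{\bar z}$, the densities $\alpha:=\rho^2|a|^2$ and $\beta:=\rho^2|b|^2$, and note that $J=\det(\dd N)$ is a positive multiple of $\alpha-\beta$, so $\Sigma=\{J=0\}=\{\alpha=\beta\}$. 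Differentiating the equation gives $\partial_{\bar z}a=\gamma a$ with $\gamma=-2(\log\rho)_w\,b$ and $\partial_z b=\delta b$ with $\delta=-2(\log\rho)_w\,a$, so $a$ and $\overline{b}$ are pseudoholomorphic; by the Bers--Vekua similarity principle each is either identically zero or has only isolated zeros of finite order. In particular the common zero set $\{a=b=0\}=\{\dd N=0\}$ (the branch points) is isolated, which gives case (4); and the Hopf differential $\Phi=\rho^2 a\,\overline{b}\,\dd z^2$, which is holomorphic, when identically zero forces $a=0$ or $b=0$ at each point, hence $N$ holomorphic or antiholomorphic (whose singular points are then branch points).

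Next I would dispose of the degenerate alternative. If $J$ vanishes on an open set, then real-analyticity and unique continuation force $J\equiv0$ on the connected domain, i.e.\ $\alpha\equiv\beta$. In that case $\dd N=0$ exactly where $a=0$, and by the similarity principle this locus is either all of $D$ (so $a\equiv b\equiv 0$ and $N$ is constant, case (1A)) or isolated (case (1B)). This is precisely alternative (1).

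The heart of the argument is the non-degenerate case $J\not\equiv0$, analysed away from the isolated branch points, where $\alpha=\beta>0$ along $\Sigma$ and both densities stay positive nearby. Here I would introduce $u:=\log\alpha-\log\beta$, so that $\Sigma=\{u=0\}$ locally and $\sign(u)=\sign(\alpha-\beta)$. The Bochner-type (Gauss equation) identities for harmonic maps between surfaces take the form $\Delta\log\alpha=-2K_E(\alpha-\beta)$ and $\Delta\log\beta=2K_E(\alpha-\beta)$, with the flat domain Laplacian $\Delta=4\partial_z\partial_{\bar z}$ and the domain-curvature terms dropping out in the Euclidean conformal gauge. Hence $\Delta u=-4K_E(\alpha-\beta)$; writing $\alpha-\beta=\kappa\,u$ with $\kappa=\beta\,(e^{u}-1)/u>0$ real analytic, this is a linear elliptic equation $\Delta u+c\,u=0$ with $c=4K_E\kappa$. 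I would then invoke the nodal-set theory of Bers and Hartman--Wintner: at any zero, the lowest-order homogeneous part of $u$ is a nonzero harmonic polynomial of some degree $m\ge 1$, and $\Sigma=\{u=0\}$ is near that point $C^1$-diffeomorphic (indeed real-analytically diffeomorphic) to the zero set of that harmonic polynomial. When $m=1$, $\Sigma$ is a regular curve and $\rank(\dd N)=1$ (a good point), and one reads off the fold/collapse/higher-order trichotomy (2A)--(2C) according to whether $\nabla_\sigma N$ is nonzero, identically zero along $\Sigma$, or zero only at the point. When $m\ge2$ the point is a singular nodal point, and since the zero set of a homogeneous harmonic polynomial of degree $m$ in the plane is exactly $2m$ rays at equal angles $\pi/m$ in the conformal coordinate $z$, it follows that $\Sigma$ is a $C^1$-meeting of an even number of regular fold branches at equal angles, which is case (3).

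The main obstacle I anticipate is the passage through the nodal-set structure theorem together with the correct bookkeeping for the elliptic equation: one must verify that $u$ genuinely solves a second-order linear elliptic equation with at least H\"older-continuous coefficients on a full neighbourhood of each non-branch point of $\Sigma$, so that the harmonic-polynomial leading-term property applies, and this is exactly where the positivity of $\alpha,\beta$ away from branch points and the regularity of $K_E\kappa$ are used. Two secondary points need care: separating crossing points (case 3) from branch points (case 4), which is handled by the similarity principle making branch points isolated; and confirming that each branch emanating from a degree-$m$ crossing is a genuine fold away from the crossing, which is immediate since its interior points are regular ($m=1$) nodal points. The remaining subclassification at good points is essentially Wood's definition of the three types and needs nothing beyond $\rank(\dd N)=1$ there.
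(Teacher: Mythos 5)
This theorem is not proved in the paper at all: it is quoted from Wood \cite{wood1977} and used as a black box, so there is no internal argument to compare against. What you have written is, in substance, a reconstruction of Wood's own proof: the similarity principle applied to the pseudoholomorphic derivatives $w_z$ and $\overline{w_{\bar z}}$ to isolate branch points and to settle the degenerate alternative (1A)/(1B); the Bochner identities, whose domain-curvature terms cancel in the difference, turning $u=\log(\alpha/\beta)$ into a solution of $\Delta u+cu=0$ near non-branch singular points; and the Bers/Hartman--Wintner nodal-set theory giving the harmonic-polynomial leading term and hence the dichotomy between a regular singular curve ($m=1$, cases (2A)--(2C)) and an equiangular crossing of $2m$ arcs ($m\ge 2$, case (3)). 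The bookkeeping is right: the positivity and analyticity of $\kappa=\beta(e^u-1)/u$ away from branch points, the finiteness of the vanishing order via real analyticity, and the separate treatment of the $\pm$holomorphic case where $u$ is undefined are exactly the points that need care, and you address them.

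There is one genuine gap, in case (3). The theorem asserts that the arcs meeting at $p$ are \emph{general folds}, i.e., curves of fold points, and you dismiss this as ``immediate since its interior points are regular ($m=1$) nodal points.'' A regular nodal point is only a \emph{good} point: it could still be a collapse point or a good singular point of higher order, because the fold condition $\nabla_{\sigma}N\neq 0$ is the additional requirement that the arc be transverse to the kernel field of $\dd N$, which is not implied by $\nabla u\neq 0$. Nothing in the nodal-set analysis prevents the single kernel direction of $\dd N_p$ from coinciding with the tangent direction of one of the $2m$ arcs at $p$ (one can arrange this already for flat-target examples of the form $w=z+\bar z+\lambda z^3$), so transversality of the arcs to the kernel field near $p$ --- equivalently, that $N$ restricted to each arc is an immersion off the crossing --- must be argued separately, as Wood does; real analyticity alone only reduces the failure set on each arc to ``isolated points or the whole arc,'' which is not yet the stated conclusion. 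A minor related point: the ``even number'' in the statement counts the $2m$ half-branches emanating from $p$, not the $m$ smooth curves through $p$; this is worth saying explicitly so the parity claim is not misread as a constraint on $m$.
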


We are discussing here only harmonic maps between Euclidean planes
so we treat them as germs of mappings $N:(\mathbb R^2,0)\to (\mathbb R^2,0)$. 
Thus, $N=({\rm Re}(g_1),{\rm Re}(g_2))$  where 
$g_1,g_2$ are germs of holomorphic functions in $z=x+{\rm i}y$.
Such germs are of course special in 
the set $\mathcal{E}(2,2)$ of map-germs from the plane to the plane. 

Let $\mathcal{E}(2,1)$ denote the ring of germs of smooth functions $(\mathbb R^2,0)\to \mathbb R$, 
${\mathcal M}_2$ its unique maximal ideal and $\mathcal{E}(2,2)$ the $\mathcal{E}(2,1)$-module 
of smooth map-germs $(\mathbb R^2,0)\to \mathbb R^2$.
Consider the action of the group $\mathcal A$ of pairs of germs of smooth 
diffeomorphisms $(h,k)$ of the source and target on 
${\mathcal M}_2.\mathcal{E}(2,2)$ given by
$k\circ g\circ h^{-1}$, for $g\in {\mathcal M}_2.\mathcal{E}(2,2)$ 
(see, for example, \cite{arnoldetal,Martinet,Wall}). 
A germ $g$ is said to be finitely $\mathcal A$-determined if 
there exists an integer $k$ such that any map-germ with the same $k$-jet as $g$ 
is $\mathcal A$-equivalent to $g$. 
Let $\mathcal A_{k}$ be the subgroup of $\mathcal A$ whose elements
have the identity $k$-jets. 
The group $\mathcal A_k$ is a normal subgroup of $\mathcal
A$. Define
$\mathcal A^{(k)}=\mathcal A/\mathcal A_k$. The elements of $\mathcal A^{(k)}$ are the
$k$-jets of the elements of $\mathcal A$.
The action of $\mathcal A$ on ${\mathcal M}_2.{\mathcal E}(2,2)$
induces an action
of $\mathcal A^{(k)}$ on $J^k(2,2)$ as follows. For $j^kf\in J^k(2,2)$
and $j^kh\in \mathcal A^{(k)}$, $j^kh.j^kf=j^k(h.f).$

The tangent
space to the $\mathcal{A}$-orbit of $f$ at the germ $f$ is given by
 $$
L{\mathcal{A}}{\cdot}{f}={\mathcal M}_2.\{f_{x},f_{y}\}+f^*({\mathcal M}_2).\{e_1,e_2\},
$$
where $f_{x}$ and $f_y$ are the partial derivatives of $f$, $e_1,e_2$ denote
the standard basis vectors of ${\mathbb R}^2$ considered as elements
of ${\mathcal{E}}(2,2)$, and $f^*({\mathcal M}_2)$ is the pull-back of the maximal
ideal in ${\mathcal{E}}_2$. 
The extended tangent space to the $\mathcal{A}$-orbit of $f$ at the germ $f$
is given by
$$
L_e{\mathcal{A}}{\cdot}{f}={\mathcal E}_2.\{f_{x},f_{y}\}+f^*({\mathcal E}_2).\{e_1,e_2\}.
$$

We ask which finitely $\mathcal A$-determined singularities of map-germs in 
$\mathcal{E}(2,2)$ have a harmonic map-germ in their $\mathcal A$-orbit, that is, 
which singularities can be represented by a germ of a harmonic map.
We also ask whether an $\mathcal A_e$-versal deformation of the singularity
can be realized by families of harmonic maps. (This means that the initial 
harmonic map-germ can be deformed within the set of harmonic map-germs and 
the deformation is $\mathcal A_e$-versal.)

The most extensive classification of finitely $\mathcal A$-determined 
singularities of maps germs in $\mathcal{E}(2,2)$ of $\rank$ $1$ is carried out by Rieger in \cite{rieger} where he gave the following orbits of 
$\mathcal A$-codimension $\le 6$ (the parameters $\alpha$ and $\beta$ are moduli and take values in $\mathbb R$ with certain exceptional values removed, 
see \cite{rieger} for details):

$(x,y^2)$

$(x,xy+P_1(y))$, $P_1=y^3,\, y^4,\, y^5\pm y^7,\, y^5,\,  
y^6\pm y^8+\alpha y^9,\,  y^6+ y^9$ or $y^7\pm y^9+
\alpha y^{10}+\beta y^{11}$

$(x,y^3\pm x^ky)$, $k\ge 2$

$(x,xy^2+ P_2(y))$, $P_2=y^4+y^{2k+1} (k\ge 2),\, 
y^5+y^6,\,  y^5\pm y^9,\,  y^5$ or $y^6+y^7+\alpha y^9$

$(x,x^2y+P_3(x,y))$, $P_3=y^4\pm y^5,\,  y^4$ or $
xy^3+\alpha y^5+y^6+\beta y^7$

$(x,x^3y+\alpha x^2y^2+y^4 +x^3y^2)$

\medskip
The $\mathcal A$-simple map-germs of rank $0$  
are classified in \cite{riegerruas} and are as follows
$$
\begin{array}{l}
I_{2,2}^{l,m}=(x^2+y^{2l+1},y^2+x^{2m+1}),\, l\ge m\ge 1\\
I_{2,2}^{l}=(x^2-y^2+x^{2l+1},xy),\, l\ge 1.
\end{array}
$$

We answer the above two questions for the singularities in Rieger's list 
and for the $\mathcal A$-simple \mbox{rank $0$} map-germs.

\begin{proposition}\label{prop:goodsingpoint}
	{\rm (i)} The {\it fold} $(x,y^2)$ can be represented by the harmonic map 
	$(x,{\rm Re}(x+{\rm i}y)^2)$. It is $\mathcal A_e$-stable.
	
	{\rm (ii)} Any finitely $\mathcal A$-determined map-germ in $\mathcal{E}(2,2)$ 
	with a 2-jet $\mathcal A^{(2)}$-equivalent to $(x,xy)$ can be represented
	by a harmonic map. Furthermore, there is an $\mathcal A_e$-versal deformation of such germs 
	by harmonic maps.
\end{proposition}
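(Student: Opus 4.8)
The plan is to prove the two parts separately, handling (i) by an explicit computation and (ii) by reducing to Rieger's normal forms and then exploiting the freedom in the holomorphic data defining a harmonic map. For (i), I would first observe that $(x,{\rm Re}(x+{\rm i}y)^2)=({\rm Re}(z),{\rm Re}(z^2))=(x,x^2-y^2)$ is a genuine harmonic map, since both components are real parts of holomorphic functions. Post-composing with the target diffeomorphism $(X,Y)\mapsto (X,X^2-Y)$ yields $(x,y^2)$, so this harmonic map lies in the $\mathcal A$-orbit of the fold. As the fold is one of the two stable singular map-germs from the plane to the plane in Whitney's theorem, it has $\mathcal A_e$-codimension $0$; hence it is $\mathcal A_e$-stable and no deformation is needed.

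For (ii), the first step is to recall that any finitely $\mathcal A$-determined germ whose $2$-jet is $\mathcal A^{(2)}$-equivalent to $(x,xy)$ is, by Rieger's classification, $\mathcal A$-equivalent to some normal form $(x,xy+P_1(y))$ from the list. It therefore suffices to realize each such normal form by a harmonic map and to versally deform it within harmonic maps. I would work with the family
\[
N=(x,{\rm Re}(g(z))),\qquad g(z)=-\tfrac{{\rm i}}{2}z^2+\sum_{j\ge 3}\gamma_j z^j,\quad \gamma_j\in\C,
\]
each member of which is harmonic and has $2$-jet $(x,xy)$ because ${\rm Re}(-\tfrac{{\rm i}}{2}z^2)=xy$.

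The key computation is that ${\rm Re}(\gamma_j z^j)$ contributes a pure $y^j$-term $({\rm Re}(\gamma_j{\rm i}^{\,j}))\,y^j$ together with cross terms $x^ay^b$, $a\ge 1$. A direct determinacy computation with $f=(x,xy+P_1(y))$, $f_x=(1,y)$, $f_y=(0,x+P_1'(y))$ shows that $f^*(\mathcal M_2)\{e_2\}$ already contains every $(0,x^ay^b)$ with $a\ge1$, while $\mathcal E(2,2)$-multiples of $f_x,f_y$ absorb the first-coordinate terms; thus all cross terms lie in the extended tangent space and only the pure $y^j$-coefficients matter for the $\mathcal A$-class. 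Since ${\rm Re}(\gamma_j{\rm i}^{\,j})$ can be set to any real value by choosing $\gamma_j$, and since the modulus in a form such as $(x,xy+y^6\pm y^8+\alpha y^9)$ is precisely the coordinate-change-invariant coefficient of the top pure power, I would match the jet of $N$ to that of the prescribed normal form up to $\mathcal A^{(k)}$-equivalence for the determinacy degree $k$. Finite $\mathcal A$-determinacy then gives $N\sim_{\mathcal A}(x,xy+P_1(y))$, including every admissible value of the moduli $\alpha,\beta$.

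Finally, for versality I would use that the $\mathcal A_e$-normal space of $(x,xy+P_1(y))$ is spanned by finitely many monomial vectors $(0,y^{j})$, and deform inside harmonic maps by $N_{\mathbf t}=(x,{\rm Re}(g(z)+\sum_i t_i z^{j_i}))$, choosing $z^{j_i}$ or ${\rm i}\,z^{j_i}$ so that the initial velocity $(0,{\rm Re}(z^{j_i}))$ projects onto $(0,y^{j_i})$ modulo $L_e\mathcal A\cdot f$. Since the whole family is harmonic and its initial velocities span the $\mathcal A_e$-normal space, the deformation is $\mathcal A_e$-versal. I expect the main obstacle to be the bookkeeping in the two cross-term steps: verifying carefully that every cross monomial $(0,x^ay^b)$ and $(x^ay^b,0)$ produced by ${\rm Re}(z^j)$ genuinely lies in $L_e\mathcal A\cdot f$ at the relevant order, and confirming that after this reduction the pure $y^j$-coefficients — hence both the moduli and the versality directions — remain freely and independently adjustable through the complex parameters $\gamma_j$ and $t_i$.
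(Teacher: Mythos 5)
Your proposal follows essentially the same route as the paper: part (i) is the same explicit computation, and for part (ii) the paper likewise writes $j^kN=(x,\sum_{j=2}^k{\rm Re}({\bf a}_j(x+{\rm i}y)^j))$, absorbs the cross monomials, and observes that the resulting map $({\bf a}_3,\ldots,{\bf a}_k)\mapsto(c_3,\ldots,c_k)$ onto the pure $y$-power coefficients is surjective; the only difference is that the paper performs the absorption by explicit inductive coordinate changes ($(u,v)\mapsto(u,v+cu^p)$ in the target for $(0,x^p)$ and $(x,y)\mapsto(x,y+cx^py^{q-1})$ in the source for the remaining cross terms) rather than by a tangent-space computation. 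One sub-claim in your version is wrong as stated: $f^*(\mathcal M_2)\{e_2\}$ consists of germs $(0,\psi(x,xy+P_1(y)))$ with $\psi\in\mathcal M_2$, i.e.\ functions of the two components of $f$, and it does not contain, for instance, $(0,xy)$ when $f=(x,xy+y^3)$; the cross terms $(0,x^ay^b)$, $a\ge 1$, are instead absorbed via $x^{a-1}y^b f_y=(0,x^ay^b+x^{a-1}y^bP_1'(y))$, which removes them only modulo pure $y$-powers of strictly higher order --- precisely the reason the paper's coefficients $c_j$ carry the correction term $Q$ depending on the lower-order ${\bf a}_i$, with surjectivity saved by the triangular structure. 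With that repair (or with the paper's coordinate-change bookkeeping) your argument, including the versality step via the added ${\rm Re}({\bf b}_jz^j)$ terms, goes through and coincides with the paper's proof.
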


\begin{proof} (i) is trivial. For (ii), since $\rank(\dd N)_0=1$, we can make holomorphic changes of coordinates and set
$g_1=z$. We can also take $j^1N=(x,0)$. 
	
Any $k$-$\mathcal A$-determined map-germ with 2-jet $(x,xy)$
is $\mathcal A$-equivalent to one in the form $(x,xy+P(y))$, where $P$ is a polynomial
in $y$ and $3\le {\rm degree}(P)\le k$ (see \cite{rieger}).
	
We have $j^kN=(x,\sum_{j=2}^k {\rm Re}({\bf a}_j(x+{\rm i}y)^j))$, with ${\bf a}_j=a_{j,1}+{\rm i}a_{j,2}$.
Changes of coordinates are carried out inductively on the jet level to reduce
$j^kN$ to the form $(x,xy+P(y))$. 
Monomials of the form $(0,x^p)$ are eliminated by changes of coordinate $(u,v)\mapsto
(u,v+cu^p)$ in the target, and monomials of the form
$(0,x^py^q)$, $q\ge 1$, are eliminated by changes of coordinate $(x,y)\mapsto
(x,y+cx^py^{q-1})$ in the source ($c$ an appropriate scalar). These 
inductive changes of coordinates reduce the $k$-jet of $N$ to the form $(x,xy+\sum_{j=3}^kc_jy^j)$
with $c_j={\rm Re}({\bf a}_j({\rm i})^j)+Q$, $Q$ a polynomial
in $a_{i,1},a_{i,2}$ with $i<j$. Clearly, the map $\mathbb C^{k-2}\to \mathbb R^{k-2}$
given by $({\bf a}_3,\ldots, {\bf a}_k)\mapsto (c_3,\ldots, c_k)$ is surjective and this proves 
the first part of the proposition.
	
For the second part, an $\mathcal A_e$-versal deformation of 
$g(x,y)=(x,xy+P(y))$ can be taken in the form 
$G(x,y,\lambda_1,\ldots,\lambda_l)=(x,xy+P(y)+\sum_{i=1}^l\lambda_iy^i)$, where the 
$\lambda_i$ are 
the unfolding parameters and $l$ is the $\mathcal A_e$-codimension of the singularity 
of $g$.
The same argument as above shows that $N+(0,\sum_{j=2}^k {\rm Re}
({\bf b}_j(x+{\rm i}y)^j))$ is
an $\mathcal A_e$-versal deformation of $N$, with ${\bf b}_j=b_{j,1}+{\rm i}b_{j,2}$ the unfolding 
parameters.
\end{proof}

Harmonic map-germs with finitely $\mathcal A$-determined singularities as in Proposition \ref{prop:goodsingpoint} are the {\it good singular points of higher order} in Wood's terminology.
Observe that the {\it collapse point} is a `singularity of a map-germ 
$\mathcal A$-equivalent to $(x,xy)$; this is not finitely $\mathcal A$-determined.

\begin{proposition}\label{prop:beaks}
	The only map-germ in the series $(x,y^3\pm x^ky)$, $k\ge 2$, that can be realized by 
	a harmonic map is the so-called beaks-singularity $(x,y^3-x^2y)$. 
	In that case, there exists an $\mathcal A_e$-versal deformation 
	of the singularity by a 1-parameter family of harmonic map-germs.
\end{proposition}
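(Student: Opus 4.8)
The plan is to start from the rank-one reduction already used in the proof of Proposition~\ref{prop:goodsingpoint}. Every germ in the series $(x,y^3\pm x^ky)$ is singular at $0$ with $\rank(\dd N)_0=1$, so after a holomorphic change of source coordinate I may assume $g_1=z$ and write the harmonic germ as $N=(x,{\rm Re}(g_2(z)))$ with $g_2(z)=\sum_{j\ge 1}{\bf a}_jz^j$, ${\bf a}_j=a_{j,1}+{\rm i}\,a_{j,2}$. First I would pin down the low-order terms forced by membership in the series, all of whose members have $2$-jet $(x,0)$ and nonzero cubic $3$-jet. The $1$-jet of the second component is ${\rm Re}({\bf a}_1z)=a_{1,1}x-a_{1,2}y$, and $\det(\dd N)_0=-a_{1,2}$, so singularity forces $a_{1,2}=0$; a target shift $v\mapsto v-a_{1,1}u$ (which preserves harmonicity) then gives $1$-jet $(x,0)$. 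Next, ${\bf a}_2=0$: otherwise the $2$-jet ${\rm Re}({\bf a}_2z^2)$ is a nonzero harmonic quadratic, necessarily indefinite, so $N$ reduces to a fold $(x,\pm y^2)$, which is stable and not in the series. Finally ${\bf a}_3\ne 0$ is needed to produce a nonzero cubic term.

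The heart of the obstruction is a discriminant computation on this cubic term. Writing $h={\rm Re}(g_2)$, the Jacobian is $J=\det(\dd N)=h_y=-{\rm Im}(g_2')$, so the singular set $\Sigma=\{J=0\}$ has leading part the zero set of the harmonic quadratic $-{\rm Im}(3{\bf a}_3z^2)$. Expanding $3{\bf a}_3z^2$ shows this quadratic form has discriminant $4|3{\bf a}_3|^2>0$: a nonzero harmonic quadratic is always indefinite. Hence for \emph{every} harmonic germ with ${\bf a}_3\ne 0$ the leading quadratic of $J$ is nondegenerate and indefinite, so $\Sigma$ is a pair of transverse smooth branches. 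Since the singular set (as a set-germ in the source) is an $\mathcal A$-invariant, I would then compare with the members of the series: for the beaks $(x,y^3-x^2y)$ one has $\Sigma=\{3y^2-x^2=0\}$, two transverse lines; for the lips $(x,y^3+x^2y)$, $\Sigma$ is the isolated point $\{0\}$; and for every $k\ge 3$, $\Sigma=\{3y^2\pm x^k=0\}$ is a point, a single cuspidal branch, or two \emph{mutually tangent} branches. None of these has tangent cone equal to two distinct lines except the beaks, so $N$ can be $\mathcal A$-equivalent to the beaks and to no other member.

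For realizability and the versal deformation I would exhibit $N_0=(x,{\rm Re}(z^3))=(x,x^3-3xy^2)$: here ${\bf a}_3=1\ne 0$, its singular set $\{xy=0\}$ is two transverse lines, so its $3$-jet is $\mathcal A^{(3)}$-equivalent to that of the beaks; as the beaks is $3$-$\mathcal A$-determined (\cite{rieger}), $N_0$ is the beaks. For the deformation I take the harmonic family $N_\lambda=(x,{\rm Re}(z^3+{\rm i}\lambda z))=(x,x^3-3xy^2-\lambda y)$, with infinitesimal generator $\dot N=(0,-y)$. Because the beaks has $\mathcal A_e$-codimension $1$ \cite{rieger}, it suffices to check $(0,y)\notin L_e\mathcal A\cdot N_0$. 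The key is the symmetry $N_0(x,-y)=N_0(x,y)$: of the generators of $L_e\mathcal A\cdot N_0=\mathcal E_2\{N_x,N_y\}+N_0^*(\mathcal E_2)\{e_1,e_2\}$, the vectors $N_x=(1,3x^2-3y^2)$, $(\phi_1,0)$ and $(0,\phi_2)$ with $\phi_i\in N_0^*(\mathcal E_2)$ are all even in $y$, while $N_y=(0,-6xy)$ is odd. Writing a general element as $\alpha N_x+\beta N_y+(\phi_1,\phi_2)$, vanishing of the first component forces $\alpha=-\phi_1$ even in $y$, so the only odd-in-$y$ part of the second component is $-6xy\,\beta_{\rm even}$; equating it to $y$ gives $\beta_{\rm even}=-1/(6x)$, which is not smooth. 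Thus $\dot N$ spans the one-dimensional normal space and $N_\lambda$ is $\mathcal A_e$-versal.

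The main obstacle I anticipate is making the singular-set comparison airtight. The tempting invariant ``number of distinct real linear factors of the cubic'' is \emph{not} $\mathcal A$-invariant: the target change $v\mapsto v-u^3$ carries $x^3-3xy^2$ (three distinct factors) to $-3xy^2$ (a repeated factor). The argument must therefore be phrased through the zero set of $J=h_y$, equivalently through the sign of the discriminant of the quadratic $C_y$, which \emph{is} invariant (indeed $-3xy^2$ still has $C_y$-discriminant $36>0$), and one must verify that ``two transverse branches'', ``two tangent branches'', and ``isolated point'' are pairwise non-diffeomorphic as set-germs. By contrast, the versality step is short once the $y\mapsto -y$ symmetry of $N_0$ is exploited.
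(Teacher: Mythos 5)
Your exclusion argument is sound and is essentially a hands-on version of what the paper does: the paper invokes Wood's Theorem \ref{theo:wood} to rule out the lips and all $k\ge 3$ members because their singular sets (an isolated point, a single cuspidal branch, or two tangent branches) cannot occur for a harmonic map at a rank-one point, while your direct computation that the leading quadratic of $J=-\mathrm{Im}(g_2')$ is a nonzero harmonic, hence indefinite, form proves the same thing without citing Wood. (The small slip that a purely imaginary ${\bf a}_2$ gives $2$-jet $(x,xy)$ rather than a fold is harmless, since that $2$-jet is also not in the series.)

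The realization step, however, contains a genuine error. The germ $N_0=(x,\mathrm{Re}(z^3))=(x,x^3-3xy^2)$ is \emph{not} the beaks: the target change $v\mapsto v-u^3$ carries it to $(x,-3xy^2)$, so its $3$-jet lies in the $\mathcal A^{(3)}$-orbit of $(x,xy^2)$ --- a different orbit in Rieger's classification, namely the root of the $(x,xy^2+P_2(y))$ series treated in Proposition \ref{prop:xy^2series} --- and $(x,x^3-3xy^2)$ itself is not even finitely $\mathcal A$-determined. The inference ``singular set is two transverse lines, hence the $3$-jet is that of the beaks'' is precisely where this fails: $(x,xy^2)$ also has singular set $\{xy=0\}$, so the singular set does not separate these two orbits, and your own observation about the target change $v\mapsto v-u^3$ should have alerted you that your representative sits in the wrong one. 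The correct harmonic representative is $(x,\mathrm{Re}(\mathrm{i}z^3))=(x,y^3-3x^2y)$, as in the paper, whose $3$-jet is equivalent to $(x,y^3-x^2y)$. Consequently the versality computation also collapses: it is carried out for the wrong germ, and its premise that the germ has $\mathcal A_e$-codimension $1$ is false for $(x,x^3-3xy^2)$. For the correct $N_0$ the paper's family $(x,y^3-3x^2y+\lambda y)=(x,\mathrm{Re}(\mathrm{i}z^3-\lambda\mathrm{i}z))$ is harmonic and $\mathcal A_e$-versal; a parity argument along your lines can be salvaged there, but it must exploit oddness, not evenness, of the second component in $y$.
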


\begin{proof}
	Two $\mathcal A$-equivalent map-germs have diffeomorphic singular sets.
	The singular set of the map-germ $(x,y^3\pm x^ky)$ is given by  
	$3y^2\pm x^k$. For $k> 2$, this is either an isolated point, a single branch singular curve or two
	tangential smooth curves. All these cases are excluded for harmonic maps by Wood's Theorem \ref{theo:wood}. For $k=2$, the lips-singularity $(x,y^3+ x^2y)$
	is also excluded by Wood's theorem as the singular set is an isolated point.
	The beaks singularity can be realized by the harmonic map
	$(x,{\rm Re}({\rm i}(x+{\rm i}y)^3))=(x,y^3-3x^2y)$. The family 
	$(x,y^3-3x^2y+a_{1,2}y)$, which can be written as 
	$(x,{\rm Re}({\rm i}(x+{\rm i}y)^3-a_{1,2}{\rm i}(x+{\rm i}y)))$,
	is an $\mathcal A_e$-versal deformation and is given by harmonic map-germs.
\end{proof}

\begin{proposition}
	\label{prop:xy^2series}
	Any finitely $\mathcal A$-determined map-germ in $\mathcal{E}(2,2)$ 
	with a 3-jet $\mathcal A^{(3)}$-equivalent to $(x,xy^2)$ can be represented
	by a harmonic map. Furthermore, there are $\mathcal A_e$-versal deformations of such germs by harmonic maps.
\end{proposition}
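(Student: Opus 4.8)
The plan is to mirror the proof of Proposition~\ref{prop:goodsingpoint}(ii), adapted to the cubic leading term. Since the germ has rank $1$, I would first normalize by a holomorphic change of source coordinate so that $g_1=z$; the harmonic representative we seek then has the form $N=(x,{\rm Re}(g_2))$ with $g_2=\sum_{j\ge 2}{\bf a}_j z^j$ and ${\bf a}_j=a_{j,1}+{\rm i}a_{j,2}$. The $2$-jet of any $\mathcal A$-representative must be $\mathcal A^{(2)}$-equivalent to $(x,0)$, and since $(x,{\rm Re}({\bf a}_2 z^2))$ is $\mathcal A^{(2)}$-equivalent to $(x,0)$ only when ${\bf a}_2=0$, I would set ${\bf a}_2=0$. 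By Rieger's classification \cite{rieger} it then suffices to realize each normal form $(x,xy^2+P_2(y))$ by such an $N$.

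The key new point, compared with the $(x,xy)$ case, is fixing the $3$-jet. I would take ${\bf a}_3$ real and nonzero, so that ${\rm Re}({\bf a}_3 z^3)=a_{3,1}(x^3-3xy^2)$ carries no $y^3$ term; a target change $(u,v)\mapsto(u,(v-a_{3,1}u^3)/(-3a_{3,1}))$ then reduces the $3$-jet exactly to $(x,xy^2)$. This is precisely the feature distinguishing the $xy^2$-series: taking ${\bf a}_3$ real makes the cubic part a double-root form reducible to $xy^2$, whereas a ${\bf a}_3$ with nonzero imaginary part produces a $y^3$ term and the three-distinct-root (beaks) or one-real-root (lips) cubics of the $(x,y^3\pm x^2y)$ series treated in Proposition~\ref{prop:beaks}.

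Next I would reduce the higher jets inductively on total degree, exactly as in Proposition~\ref{prop:goodsingpoint}: pure monomials $x^j$ are removed by target changes $(u,v)\mapsto(u,v-cu^j)$, while mixed monomials $x^py^q$ with $p,q\ge 1$ and $(p,q)\ne(1,2)$ are removed by source changes $(x,y)\mapsto(x,y+cx^{p-1}y^{q-1})$, whose leading effect on the term $xy^2$ is exactly $2c\,x^py^q$ in the same degree. One checks that at each degree these changes leave the already-normalized lower-degree terms unaffected and introduce no $y^3$, so the $k$-jet is brought to the form $(x,xy^2+\sum_{j\ge 4}c_jy^j)$. Since the coefficient of $y^j$ in ${\rm Re}({\bf a}_j z^j)$ equals ${\rm Re}({\bf a}_j{\rm i}^j)$, a nonzero $\mathbb R$-linear functional of ${\bf a}_j$, I can choose ${\bf a}_3,{\bf a}_4,\dots$ in turn so that the $c_j$ match any prescribed $P_2$, realizing every normal form and proving the first assertion.

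For the versal statement I would compute the $\mathcal A_e$-normal space of $g=(x,f)$ with $f=xy^2+P_2(y)$. From $L_e\mathcal A\cdot g=\mathcal E_2\{g_x,g_y\}+g^*(\mathcal E_2)\{e_1,e_2\}$, with $g_x=(1,f_x)$, $g_y=(0,f_y)$, $f_x=y^2$, $f_y=2xy+P_2'(y)$, every power of $x$ lies in $g^*(\mathcal E_2)$, the relation $f\equiv 0$ gives $xy^2\equiv-P_2(y)$, and $f_y\equiv 0$ gives $xy\equiv-\tfrac12 P_2'(y)$; iterating $x^py^q=x^{p-1}(xy)y^{q-1}$ reduces every mixed monomial to a pure $y$-polynomial. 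Hence the normal space is spanned by pure monomials $y^{j}$, so an $\mathcal A_e$-versal deformation may be taken as $(x,f+\sum_i\lambda_i y^{j_i})$, and by the surjectivity argument above these $y$-directions are produced by the harmonic deformations $N+(0,{\rm Re}(\sum_i{\bf b}_{j_i}z^{j_i}))$. The main obstacle is exactly this last step: one must verify that the normal space genuinely requires only pure-$y$ directions, since a versal direction containing an irreducibly $x$-dependent monomial in the second component could not be produced by the rigid form ${\rm Re}(g_2)$ of a harmonic map --- this is the mechanism behind the non-realizable cases of Proposition~\ref{prop:3jet(x,0)}.
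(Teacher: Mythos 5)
Your proof is correct and takes essentially the same route as the paper, whose own proof simply invokes Rieger's normal form $(x,xy^2+P(y))$ and states that the argument then continues as in Proposition~\ref{prop:goodsingpoint}; you have merely filled in the details the paper leaves implicit (the choice of a real $\mathbf{a}_3$ to avoid a $y^3$ term in the cubic part, the inductive jet reduction, and the fact that the $\mathcal A_e$-normal space is spanned by pure powers of $y$). The only tiny slip is that $\mathbf{a}_3$ has already been committed to being real and nonzero to fix the $3$-jet, so the coefficients freely adjusted to match $P_2$ should be $\mathbf{a}_4,\mathbf{a}_5,\dots$ rather than ``$\mathbf{a}_3,\mathbf{a}_4,\dots$'' --- harmless, since Rieger's $P_2$ contains no $y^3$ term.
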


\begin{proof}
	Any finitely $\mathcal A$-determined map-germ
	with a 3-jet $(x,xy^2)$ is $\mathcal A$-equivalent to 
	$(x,xy^2+P(y))$ for some polynomial function $P$ (see \cite{rieger}). 
	The proof then continues in a way similar to that of  Proposition \ref{prop:goodsingpoint}.
\end{proof}

\begin{proposition}\label{prop:x^2y}
{\rm (i)} 	Map-germs with a 3-jet $\mathcal A^{(3)}$-equivalent to $(x,x^2y)$ cannot be represented
	by a harmonic map.

{\rm (ii)} The map-germ $(x,x^3y+\alpha x^2y^2+y^4 +x^3y^2)$ cannot be represented
by a harmonic map.
\end{proposition}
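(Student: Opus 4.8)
{\rm (i)} Map-germs with a 3-jet $\mathcal A^{(3)}$-equivalent to $(x,x^2y)$ cannot be represented by a harmonic map.
{\rm (ii)} The map-germ $(x,x^3y+\alpha x^2y^2+y^4 +x^3y^2)$ cannot be represented by a harmonic map.

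Let me think about how to prove this.

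The key strategy throughout this section has been to use **Wood's Theorem \ref{theo:wood}** to exclude harmonic realizations. The theorem severely constrains the possible singular sets of harmonic maps between Riemannian surfaces: the singular set $\Sigma = J^{-1}(0)$ (where $J = \det(dN)$) must be either a good point (regular curve), a $C^1$-meeting point of an *even* number of general folds arranged at *equal angles*, a branch point, or the map is degenerate everywhere. The crucial invariant is that $\mathcal{A}$-equivalent map-germs have *diffeomorphic singular sets* (this was used explicitly in Proposition \ref{prop:beaks}). So the plan is to compute the singular set of each normal form and check whether its local structure is compatible with Wood's list.

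For **part (i)**: I want to show that any finitely $\mathcal{A}$-determined germ with 3-jet $\mathcal{A}^{(3)}$-equivalent to $(x, x^2y)$ has a singular set incompatible with Wood's theorem. Since $\mathcal{A}$-equivalence preserves the diffeomorphism type of the singular set, it suffices to analyze the singular set of the 3-jet model $(x, x^2y)$ itself — but one must be careful, because higher-order terms could in principle alter the singular set. The cleaner approach is to observe that for a germ $g = (x, g_2(x,y))$ of this type, the Jacobian is $J = \partial g_2/\partial y$, and its 2-jet is $x^2$ (up to the coordinate normalizations used in Rieger's list). So $\Sigma = J^{-1}(0)$ has 2-jet $\{x^2 = 0\}$, i.e. a non-reduced double line — locally this is either an isolated point, a single smooth branch, or a tangential (cuspidal or non-transverse) configuration, but in no case does it consist of *transverse* branches meeting at equal angles, nor is it a regular curve with nonvanishing gradient of $J$ (since $dJ = d(x^2 + \text{h.o.t.})$ vanishes at the origin). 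This rules out a good point. The remaining Wood cases (degenerate, branch point, even number of folds at equal angles) must each be excluded: degeneracy and branch points are ruled out because the germ is finitely determined with $\mathrm{rank}(dN)_0 = 1$, and the equal-angle transverse-fold configuration is ruled out because the singular set is tangent to a single line. The main obstacle here is making rigorous the claim that the higher-order terms cannot repair the singular set into an admissible configuration; this requires either a direct argument that $J$ has an $A_{\ge 1}$ critical point at the origin with a definite or degenerate (non-Morse-with-two-transverse-branches) Hessian, or else invoking that the Boardman/singular-set type is an $\mathcal{A}$-invariant computable from the normal form.

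For **part (ii)**: the germ $(x, x^3y + \alpha x^2 y^2 + y^4 + x^3 y^2)$ is the single rank-1 germ in Rieger's list whose second factor has vanishing 3-jet in a way giving a more degenerate Jacobian. Here $J = \partial_y(\text{second factor}) = x^3 + 2\alpha x^2 y + 4y^3 + 2x^3 y$, whose lowest-order part is $x^3 + 2\alpha x^2 y + 4y^3$, a cubic form. I would analyze the zero set of this cubic: a real cubic form in two variables factors over $\mathbb{R}$ into either one real linear factor (so $\Sigma$ is a single smooth branch, giving a good singular point of higher order — but I must then check via Wood whether this specific germ is actually realizable, since a single smooth $\Sigma$ is a priori allowed) or three real linear factors (giving three transverse branches through the origin). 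If the singular set has three branches, this is an *odd* number of folds, immediately contradicting Wood's requirement of an even number — this is the clean exclusion. The subtle case is when the cubic has a single real branch; then the singular set is an admissible-looking regular or cuspidal curve, and exclusion must come from a finer obstruction — either that the germ fails to be a good point (because $J$ is not a regular function at $0$, since $J$ vanishes to order 3), forcing it into the $C^1$-meeting-of-folds case which then fails the counting/angle condition, or that the rank-1 harmonic structure $N = (\mathrm{Re}\,g_1, \mathrm{Re}\,g_2)$ cannot produce this Jacobian. The hard part will be showing that no value of the modulus $\alpha$ (in its allowed range) produces a cubic $x^3 + 2\alpha x^2 y + 4 y^3$ whose real zero set *and* the associated fold-angle data are simultaneously compatible with Wood's even-fold-at-equal-angles condition; I expect this reduces to checking that the equal-angle condition (which for a harmonic map forces the branches to be symmetric under rotation in a conformal coordinate) is incompatible with the specific coefficients $1, 2\alpha, 0, 4$ of the cubic for all admissible $\alpha$.

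\medskip

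The overall method, then, is uniform: compute the Jacobian $J$ and its leading homogeneous part from the normal form, determine the real-analytic type of $\Sigma = J^{-1}(0)$ near the origin, and confront it with the rigid list in Wood's Theorem \ref{theo:wood} — exploiting that the singular set is an $\mathcal{A}$-invariant up to diffeomorphism, exactly as in the proofs of Propositions \ref{prop:beaks} and following. The decisive and most delicate ingredient is the parity and equal-angle constraint on $C^1$-meeting points of folds, which is what ultimately excludes the borderline configurations that a purely topological count of branches would leave open.
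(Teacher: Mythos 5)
Your approach is essentially the paper's: compute the Jacobian $J=\partial g_2/\partial y$ from the normal form, use that $\mathcal A$-equivalence preserves the singular set up to diffeomorphism, and exclude via Wood's Theorem \ref{theo:wood}. The ``hard parts'' you flag dissolve on inspection: in (i) the $2$-jet of $J$ is exactly $x^2$ (since $j^3P\equiv 0$), so $J$ has an $A_{\ge 2}$ function singularity and its zero set (isolated point, cusp, or tangential branches) can never be the regular curve of a good point nor a transverse even-fold crossing, regardless of higher-order terms; and in (ii) the parity count alone --- one or three branches, both odd --- already rules out every alternative in Wood's theorem for a rank-one, finitely determined germ with $\dd J(0)=0$, so no equal-angle analysis in $\alpha$ is needed.
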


\begin{proof}
Such germs have the form $(x,x^2y+P(x,y))$ with $j^3P\equiv 0$. Their singular set has equation
	$x^2+P_y(x,y)=0$, so has an $A_k$-singularity with $k\ge 2$. These are excluded 
	for harmonic maps by Wood's Theorem \ref{theo:wood}. Similarly, the singular set of the germ in (ii) has an odd number of branches (one or three) and this is excluded 
for harmonic maps by Wood's Theorem \ref{theo:wood}.
\end{proof}

Propositions \ref{prop:goodsingpoint}, \ref{prop:beaks}, \ref{prop:xy^2series}, \ref{prop:x^2y} together give a complete classification of finitely $\mathcal A$-determined singularities of harmonic 
map-germs with $j^3N$ not ${\mathcal A}$-equivalent to $(x,0)$.
When  $j^3N$ is ${\mathcal A}$-equivalent to $(x,0)$, we have the following.

\begin{proposition}\label{prop:3jet(x,0)}
Suppose that $j^kN$ is ${\mathcal A^{(k)}}$-equivalent to $(x,0)$, for $k\ge 5$. Then the singularity of 
	$N$ does not admit an $\mathcal A_e$-versal deformation by germs of 
	harmonic maps. 
\end{proposition}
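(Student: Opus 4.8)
The plan is to prove the stronger infinitesimal statement: the extended tangent space $L_e\mathcal A\cdot N$ together with the space $\mathcal H$ of \emph{infinitesimal harmonic deformations} of $N$ does not fill $\mathcal E(2,2)$. Since any deformation of $N$ through harmonic maps has all of its infinitesimal generators in $\mathcal H$, the infinitesimal criterion for $\mathcal A_e$-versality then shows at once that no such family can be versal.

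First I would normalize. The hypothesis (for any $k\ge 5$) implies $j^5N\sim_{\mathcal A^{(5)}}(x,0)$, so in particular $\rank(\dd N)_0=1$, and after a holomorphic change of source coordinate I may assume $N=(x,{\rm Re}(g(z)))$ for a single germ $g$ of holomorphic function; then $\mathcal H$ consists of the pairs $({\rm Re}(h_1),{\rm Re}(h_2))$ with $h_1,h_2$ holomorphic, and the Jacobian is the harmonic function $J=\det(\dd N)=-{\rm Im}(g')$. The key point is to read off $m:=\mathrm{ord}(g)$. Since ${\rm Re}(g)$ vanishes to order $m$, one has $j^kN=(x,0)$ for $k<m$, whereas $j^mN$ has Jacobian $-{\rm Im}(g')$ with nonzero homogeneous part of degree $m-1$; because an $\mathcal A$-equivalence only multiplies the Jacobian by nowhere-vanishing germs, $j^mN$ cannot be equivalent to $(x,0)$, whose Jacobian is identically zero. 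Hence $j^5N\sim(x,0)$ forces $m\ge 6$, so that ${\rm Re}(g)\in\mathcal M_2^6$ and $g'\in\mathcal M_2^5$.

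The core is then a dimension count in $J^5(2,1)$, which has dimension $21$. Let $\pi(A,B)=j^5B$. A general element of $L_e\mathcal A\cdot N$ has second component $a\,{\rm Re}(g')-b\,{\rm Im}(g')+c$ with $a,b\in\mathcal E(2,1)$ and $c\in N^*\mathcal E(2,1)$. Using $g'\in\mathcal M_2^5$, the terms $a\,{\rm Re}(g')$ and $b\,{\rm Im}(g')$ reduce modulo $\mathcal M_2^6$ to multiples of the homogeneous degree-$5$ parts of ${\rm Re}(g')$ and ${\rm Im}(g')$, which are harmonic; and since $c=F(x,{\rm Re}(g))$ with ${\rm Re}(g)\in\mathcal M_2^6$, it reduces to $F(x,0)$, a polynomial in $x$ alone. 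The second component of any element of $\mathcal H$ is a harmonic function. Therefore $\pi\bigl(L_e\mathcal A\cdot N+\mathcal H\bigr)$ lies in the span of the harmonic polynomials of degree $\le 5$ (dimension $11$) together with $1,x,x^2,x^3,x^4,x^5$, which adds only the four non-harmonic vectors $x^2,x^3,x^4,x^5$. This subspace has dimension $15<21$.

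As $\pi$ maps $\mathcal E(2,2)$ onto $J^5(2,1)$, the inclusion above shows $L_e\mathcal A\cdot N+\mathcal H\ne\mathcal E(2,2)$ (concretely, the jet of $(0,y^3)$ is not attained), which completes the proof via the versality criterion. I expect the main technical obstacle to be the bookkeeping modulo $\mathcal M_2^6$ in the third paragraph: one must verify that both $\mathcal H$ and the $\mathcal E(2,1)$-multiples of $N_x,N_y$ contribute only harmonic polynomials to the second component, so that the sole non-harmonic contributions come from $N^*\mathcal E(2,1)$ and are pure powers of $x$. That is precisely what leaves the non-harmonic monomials involving $y$ (such as $y^3$) outside the image and yields the obstruction.
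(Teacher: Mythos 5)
Your argument is correct and rests on the same underlying mechanism as the paper's proof --- an infinitesimal count showing that harmonic deformations supply too few directions to fill the $\mathcal A_e$-normal space --- but you execute it differently. The paper normalizes to $h=(x,h_2)$ with $j^kh_2\equiv 0$, exhibits a subspace of dimension $k(k-1)/2$ in $\mathcal E(2,2)/L_e\mathcal A\cdot h$ spanned by the classes of $(0,x^iy^j)$ with $j\ge 1$, $i+j\le k-1$, and notes that a harmonic deformation contributes only $2(k-1)$ initial speeds, so versality forces $2(k-1)\ge k(k-1)/2$, i.e.\ $k\le 4$. You instead work at the fixed jet level $5$ (legitimate, since an $\mathcal A^{(k)}$-equivalence truncates to an $\mathcal A^{(5)}$-equivalence), bound the image of $L_e\mathcal A\cdot N+\mathcal H$ under projection to $5$-jets of second components by the $15$-dimensional span of harmonic polynomials of degree $\le 5$ and powers of $x$, and compare with $\dim J^5(2,1)=21$, which also produces the explicit missed vector $(0,y^3)$. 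Your route gives a self-contained bound on $L_e\mathcal A\cdot N+\mathcal H$ without passing through the $\mathcal A$-normal form; the paper's count is uniform in $k$ and makes the threshold $k\le 4$ transparent. One small repair is needed in your normalization: $j^5N\sim(x,0)$ does \emph{not} force $\mathrm{ord}(g)\ge 6$, because of the single exceptional case $g=d_0z+O(z^6)$ with $d_0$ real and nonzero (e.g.\ $N=(x,x+\mathrm{Re}(z^7))$ has $j^5N=(x,x)\sim(x,0)$ but $\mathrm{ord}(g)=1$); the Jacobian argument only yields $\mathrm{Im}(g')\in\mathcal M_2^5$, which kills $d_1,\dots,d_4$ and forces $d_0$ real but not zero. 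This is harmless --- the target shear $(u,v)\mapsto(u,v-d_0u)$ preserves harmonicity, $\mathcal H$ and the hypothesis, and after it one does get $g'\in\mathcal M_2^5$ and $\mathrm{Re}(g)\in\mathcal M_2^6$ --- but the shear should be added to your first paragraph for the order claim, and hence the rest of the count, to be valid as stated.
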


\begin{proof}
Consider a finitely $\mathcal A$-determined 
germ $h(x,y)=(x,h_2(x,y))$ with $j^{k}h_2\equiv 0$. 
Then, since $j^{k-1}(\frac{\partial h_2}{\partial x})\equiv 0$ and 
	$j^{k-1}(\frac{\partial h_2}{\partial y})\equiv 0$, the space
	$\mathcal E(2,2)/L_e\mathcal A\cdot{}h$ contains the real vector 
	space $V$ of dimension $k(k-1)/2$ 
	generated by $(0,x^iy^j)$, with $0\le i\le k-1$, 
	$1\le j\le k-1$ and $i+j\le k-1$. 
	The $k$-jet of a deformation of a harmonic map $N\sim_{\mathcal A} h$ 
	by harmonic maps is of the form
	$\tilde N=N +(0,\sum_{j=1}^k {\rm Re}(b_{j,1}+{\rm i}{b_{j,2}})(x+{\rm i}y)^j)$. 
	For $\tilde N$ to be an $\mathcal A_e$-versal deformation,	 
	the polynomials $\frac{\partial \tilde N}{\partial b_{j,1}}$ and 
	$\frac{\partial \tilde N}{\partial b_{j,2}}$, $1\le j\le k-1$ must
	generate $V$. For this to be possible, we must have $2(k-1)\ge k(k-1)/2$, that is,
	$(k-1)(k-4)\le 0$, which holds if and only if $ 1\le k\le 4$.
\end{proof}

We turn now to the rank zero map-germs. 

\begin{proposition}\label{prop:Rank0}
	{\rm (i)} The singularity $I_{2,2}^{l,m}$ cannot be represented by
	a germ of a harmonic map.
	
	{\rm (ii)} The singularities $I_{2,2}^{l}$ can be represented by
	a germ of a harmonic map. Furthermore, there are $\mathcal A_e$-versal
	deformations  of these singularities by harmonic maps.
\end{proposition}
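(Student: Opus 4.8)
The plan is to separate the two parts and in both to exploit the fact that each component of a harmonic map is the real part of a holomorphic function, hence a harmonic function, so that its leading homogeneous part is a harmonic polynomial. For part (i) this is already decisive at the level of $2$-jets. If $N=({\rm Re}\,g_1,{\rm Re}\,g_2)$ has $\rank(\dd N)_0=0$, then $g_1,g_2=O(z^2)$ and the quadratic parts of the two components are harmonic quadratic forms, i.e. they lie in the two-dimensional space $\langle x^2-y^2,\,xy\rangle$. Every nonzero form $\alpha(x^2-y^2)+\beta\,xy$ in this space has discriminant $\beta^2+4\alpha^2>0$, so it is nondegenerate (rank $2$); in particular the pencil spanned by the two quadratic parts of $N$ contains no nonzero perfect square. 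By contrast $j^2 I_{2,2}^{l,m}=(x^2,y^2)$ (the terms $y^{2l+1},x^{2m+1}$ are of order $\ge 3$ since $l\ge m\ge 1$), and its pencil $\langle x^2,y^2\rangle$ does contain perfect squares. First I would observe that the property ``the linear span of the two quadratic parts contains a nonzero rank-$1$ form'' is an invariant of the $\mathcal A^{(2)}$-action: the target linear part only re-bases the pencil, while the source linear part acts by substitution and preserves the rank of a quadratic form. Since $\mathcal A$-equivalence induces $\mathcal A^{(2)}$-equivalence on $2$-jets, no rank-$0$ harmonic germ can be $\mathcal A$-equivalent to $I_{2,2}^{l,m}$, which proves (i).

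For part (ii) the same $2$-jet computation is encouraging rather than obstructive: $j^2 I_{2,2}^{l}=(x^2-y^2,xy)$ consists of harmonic forms and equals, up to the target scaling $(u,v)\mapsto(u,v/2)$, the complex square map $z\mapsto z^2=(x^2-y^2)+i\,2xy$. I would write a rank-$0$ harmonic germ as $N=\phi(z)+\overline{\psi(z)}$ with $\phi,\psi$ holomorphic and $\phi,\psi=O(z^2)$, so that $N$ carries four real Taylor coefficients $({\rm Re}\,a_j,{\rm Im}\,a_j,{\rm Re}\,b_j,{\rm Im}\,b_j)$ at each order $j\ge 2$. Starting from $\phi=z^2$, $\psi=0$ (which realizes the $2$-jet) I would run an inductive, order-by-order coordinate-change reduction in the spirit of the proof of Proposition \ref{prop:goodsingpoint}, now choosing the order at which the antiholomorphic correction $\psi$ first contributes so as to produce the term $x^{2l+1}$: at each step the admissible source diffeomorphisms, together with target diffeomorphisms and the freedom in the holomorphic coefficients, clear all monomials except those of the normal form $(x^2-y^2+x^{2l+1},xy)$, the nondegeneracy of the quadratic part $z^2$ (a genuine branch point) being what drives the reduction. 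Since a purely holomorphic perturbation $z^2+O(z^3)$ is biholomorphic to $z^2$ and hence never produces the $I_{2,2}^l$ type, the part $\psi$ must be switched on, which the description $N=\phi+\overline{\psi}$ makes transparent.

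Finally, for the versal deformation I would take the harmonic family $N+\big(\sum_j{\rm Re}(\mathbf b_j z^j),\ \sum_j{\rm Re}(\mathbf c_j z^j)\big)$, with $\mathbf b_j,\mathbf c_j\in\C$ the unfolding parameters, so that the infinitesimal deformation directions are the pairs $({\rm Re}\,z^j,0)$, $({\rm Im}\,z^j,0)$, $(0,{\rm Re}\,z^j)$, $(0,{\rm Im}\,z^j)$, and verify that they span the normal space $\mathcal E(2,2)/L_e\mathcal A\cdot N$. I expect this spanning verification to be the main obstacle, since it is exactly the point at which the harmonic constraint could fail, as it does in Proposition \ref{prop:3jet(x,0)}. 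The difference to exploit is quantitative: $I_{2,2}^l$ is a simple germ whose $\mathcal A_e$-codimension grows only linearly in $l$, whereas the harmonic family supplies four new generators at every order, so there is ample room; one must nonetheless compute $L_e\mathcal A\cdot N$ explicitly for the realized germ and check that these particular real and imaginary parts of holomorphic monomials surject onto the quotient, rather than merely count them.
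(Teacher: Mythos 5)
Your part (i) is correct and is essentially the paper's argument. The paper observes that the $2$-jet of a rank-zero harmonic germ is a pair of quadratic forms in the span of $x^2-y^2$ and $xy$, lists the resulting $\mathcal A^{(2)}$-orbits as $(x^2-y^2,xy)$, $(0,xy)$, $(0,0)$, and notes that $(x^2,y^2)=j^2I_{2,2}^{l,m}$ is not among them; your invariant (the pencil of quadratic parts of a rank-zero harmonic germ contains no nonzero rank-one form, whereas the pencil of $(x^2,y^2)$ does) is precisely what justifies that list, so this is the same proof with the distinguishing invariant made explicit.

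For part (ii) your strategy is workable but the decisive step is not actually carried out: you describe an order-by-order reduction of a general harmonic germ with $2$-jet $(x^2-y^2,xy)$ and assert that the antiholomorphic part can be ``switched on'' at order $2l+1$ so as to land in the orbit of $I_{2,2}^l$, but you never exhibit a germ or verify that the reduction terminates at that normal form rather than at some other finitely determined germ with the same $2$-jet. The paper short-circuits all of this by writing down the explicit representative $N=\bigl(x^2-y^2+{\rm Re}\,(x+{\rm i}y)^{2l+1},\,xy\bigr)$ and checking it is $\mathcal A$-equivalent to $I_{2,2}^l$; the only point needing care is that ${\rm Re}\,z^{2l+1}$ contains, besides $x^{2l+1}$, the monomials $(-1)^j\binom{2l+1}{2j}x^{2l+1-2j}y^{2j}$, which are absorbed using $y^2\equiv x^2$ modulo the first component and target changes, the surviving coefficient of $x^{2l+1}$ being ${\rm Re}\,(1+{\rm i})^{2l+1}=2^{(2l+1)/2}\cos\bigl(\tfrac{(2l+1)\pi}{4}\bigr)\neq 0$ for all $l$. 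I would recommend replacing your induction by this explicit germ. For the versal deformation the paper uses the much smaller harmonic family $N+\bigl(\sum_{j=1}^{l-1}{\rm Re}\,(\lambda_j(x+{\rm i}y)^{2j+1}),\,0\bigr)$, i.e.\ odd holomorphic powers added to the first component only; your larger four-parameters-per-order family would also work, and your remark that the spanning of $\mathcal E(2,2)/L_e\mathcal A\cdot N$ must be checked rather than counted is the right caveat, but in both versions that verification is the substantive computation and neither your proposal nor the paper writes it out.
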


\begin{proof}
	(i) We have $j^2N=(a_{2,1}(x^2-y^2)-2a_{2,2}xy,
	b_{2,1}(x^2-y^2)-2b_{2,2}xy )$, so the 
	$\mathcal A$-orbits in the 2-jets space are 
	$(x^2-y^2,xy), (0,xy), (0,0)$ which proves the claim in (i) as the 2-jet of the 
 singularity $I_{2,2}^{l,m}$ is $\mathcal A$-equivalent to $(x^2,y^2)$.
	
	(ii) A simple calculation shows that the harmonic map 
	$N=(x^2-y^2+ {\rm Re}(x+{\rm i}y)^{2l+1}),xy)$
	is in the $\mathcal A$-orbit of $I_{2,2}^l$ 
	and admits the following $\mathcal A_e$-versal deformation
	$N+ (\sum_{j=1}^{l-1}{\rm Re}(\lambda_i(x+{\rm i}y)^{2i+1}),0)$ 
	by germs of harmonic maps.
\end{proof}

\begin{remark}\label{rem:AboutSingHarmMap}
	(1) Not all rank $0$ singularities of harmonic map-germs 
	(i.e., branch points) can be 
	$\mathcal A_e$-versally deformed by harmonic maps. 
	For instance, following 
	the calculation in the proof of Proposition \ref{prop:3jet(x,0)}(ii), 
	one can show that harmonic maps
	of the form 
	$(xy+h.o.t, {\rm Re}((a_{k,1}+{\rm i}a_{k,2})(x+{\rm i}y)^k)+h.o.t)$
	do not admit $\mathcal A_e$-versal deformations by harmonic maps 
	if $k\ge 7$.
	
	(2) The singular set cannot be an isolated point when $\rank(\dd N)_p=1$ 
	(Theorem \ref{theo:wood}). 
	But it can when $\rank(\dd N)_p=0$ (this is the case, for instance, for the 
	$I_{2,2}^{l}$-singularity).
\end{remark}

\vspace{0.5cm}
\noindent
{\bf Acknowledgement:} We are very grateful to the referee for valuable suggestions. 
Part of the work in this paper was carried out while the second author was a visiting professor at Northeastern University, Boston, Massachusetts, USA. He would like to thank Terry Gaffney and David Massey for their hospitality during his visit and FAPESP for financial support with 
the grant 2016/02701-4. He is partially supported by the grants 
FAPESP 2014/00304-2 and CNPq 302956/2015-8.


\begin{thebibliography}{10}
\providecommand{\url}[1]{{#1}}
\providecommand{\urlprefix}{URL }
\expandafter\ifx\csname urlstyle\endcsname\relax
  \providecommand{\doi}[1]{DOI~\discretionary{}{}{}#1}\else
  \providecommand{\doi}{DOI~\discretionary{}{}{}\begingroup
  \urlstyle{rm}\Url}\fi

\bibitem{ArnoldWavefront}
Arnold, V.: Wave front evolution and equivariant {M}orse lemma.
\newblock Commun. Pure and Appl. Math. \textbf{29}, 557--582 (1976)

\bibitem{Arnold79}
Arnold, V.: Indexes of singular points of 1-forms on manifolds with boundary,
  convolutions of invariants of groups generated by reflections, and singular
  projections of smooth surfaces.
\newblock Uspekhi Mat. Nauk \textbf{34}, 3--38 (1979)

\bibitem{arnoldetal}
Arnold, V., Gusein-Zade, S., Varchenko, A.: Singularities of differentiable
  maps {I}, \emph{Monographs in Mathematics}, vol.~82.
\newblock Birkh\"auser Boston, Inc. (1985)

\bibitem{arnold1990}
Arnold, V.I.: Singularities of caustics and wave fronts, \emph{Mathematics and
  its Applications (Soviet Series)}, vol.~62.
\newblock Kluwer Academic Publishers Group, Dordrecht (1990)

\bibitem{spherical}
Brander, D.: Spherical surfaces.
\newblock Exp. Math. \textbf{25}(3), 257--272 (2016).
\newblock DOI: 10.1080/10586458.2015.1077359

\bibitem{bjorling}
Brander, D., Dorfmeister, J.F.: The {B}j\"orling problem for non-minimal
  constant mean curvature surfaces.
\newblock Comm. Anal. Geom. \textbf{18}, 171--194 (2010)

\bibitem{bruceParallel}
Bruce, J.: Wavefronts and parallels in {E}uclidean space.
\newblock Math. Proc. Cambridge Philos. Soc. \textbf{93}, 323--333 (1983)

\bibitem{docarmo1}
do~Carmo, M.P.: Differential geometry of curves and surfaces.
\newblock Prentice-Hall (1976)

\bibitem{DorPW}
Dorfmeister, J., Pedit, F., Wu, H.: Weierstrass type representation of harmonic
  maps into symmetric spaces.
\newblock Comm. Anal. Geom. \textbf{6}, 633--668 (1998)

\bibitem{fukuihasegawa}
Fukui, T., Hasegawa, M.: Singularities of parallel surfaces.
\newblock Tohoku Math. J. (2) \textbf{64}, 387--408 (2012)

\bibitem{Gaffney}
Gaffney, T.: The structure of ${T}\mathcal{A}(f)$, classification and an
  application to differential geometry.
\newblock Proc. Sympos. Pure Math., \textbf{40}, 409--427 (1983)

\bibitem{Goryunov}
Goryunov, V.V.: Singularities of projections of complete intersections.
\newblock In: Current problems in mathematics, {V}ol. 22, Itogi Nauki i
  Tekhniki, pp. 167--206. Akad. Nauk SSSR, Vsesoyuz. Inst. Nauchn. i Tekhn.
  Inform., Moscow (1983)

\bibitem{ishimach}
Ishikawa, G., Machida, Y.: Singularities of improper affine spheres and
  surfaces of constant {G}aussian curvature.
\newblock Internat. J. Math. \textbf{17}, 269--293 (2006)

\bibitem{izumiyasaji}
Izumiya, S., Saji, K.: The mandala of {L}egendrian dualities for pseudo-spheres
  of {L}orentz-{M}inkowski space and ``flat'' spacelike surfaces.
\newblock J. Singul. \textbf{2}, 92--127 (2010)

\bibitem{izsata}
Izumiya, S., Saji, K., Takahashi, M.: Horospherical flat surfaces in hyperbolic
  3-space.
\newblock J. Math. Soc. Japan \textbf{62}, 789--849 (2010)

\bibitem{KabataR2R2}
Kabata, Y.: Recognition of plane-to-plane map-germs.
\newblock Topology Appl. \textbf{202}, 216--238 (2016)

\bibitem{KoenVanDo}
Koenderink, J., van Doorn, A.: The singularities of the visual mapping.
\newblock Biological Cybernetics \textbf{24}, 51--59 (1976)

\bibitem{krsuy}
Kokubu, M., Rossman, W., Saji, K., Umehara, M., Yamada, K.: Singularities of
  flat fronts in hyperbolic space.
\newblock Pacific J. Math. \textbf{221}, 303--351 (2005)

\bibitem{looijenga}
Looijenga, E.: Structural stability of smooth families of ${C}^{\infty}$ -
  functions.
\newblock PhD Thesis. Universiteit van Amsterdam (1974)

\bibitem{Martinet}
Martinet, J.: Singularities of smooth functions and maps, \emph{LMS Lecture
  Note Series}, vol.~58.
\newblock Cambridge University Press (1982)

\bibitem{Platonova}
Platonova, O.: Singularities of projections of smooth surfaces.
\newblock Russian Mathematical Surveys \textbf{39}(1), 177--178 (1984)

\bibitem{PreS}
Pressley, A., Segal, G.: Loop Groups.
\newblock Oxford Mathematical Monographs. Clarendon Press, Oxford (1986)

\bibitem{rieger}
Rieger, J.: Families of maps from the plane to the plane.
\newblock J. London Math. Soc. \textbf{36}, 351--369 (1987)

\bibitem{riegerruas}
Rieger, J., Ruas, M.: Classification of \mbox{$\mathcal A$}-simple germs from
  $k^n$ to $k^2$.
\newblock Compositio Math \textbf{79}, 99--108 (1991)

\bibitem{izusajitakashi}
S~Izumiya, K.S., Takahashi, M.: Horospherical flat surfaces in hyperbolic
  3-space.
\newblock J. Math. Soc. Japan \textbf{62}, 789--849 (2010)

\bibitem{SajiR2R2}
Saji, K.: Criteria for singularities of smooth maps from the plane into the
  plane and their applications.
\newblock Hiroshima Math. J. \textbf{40}, 229--239 (2010)

\bibitem{suy}
Saji, K., Umehara, M., Yamada, K.: The geometry of fronts.
\newblock Ann. of Math. (2) \textbf{169}, 491--529 (2009)

\bibitem{Wall}
Wall, C.: Finite determinacy of smooth map-germs.
\newblock Bull. London Math. Soc. \textbf{13}, 481--539 (1981)

\bibitem{Whitney}
Whitney, H.: On singularities of mappings of euclidean spaces. {I}. {M}appings
  of the plane into the plane.
\newblock Ann. of Math. (2) \textbf{62}, 374--410 (1955)

\bibitem{wood1977}
Wood, J.: Singularities of harmonic maps and applications of the
  {G}auss-{B}onnet formula.
\newblock Amer. J. Math. \textbf{99}, 1329--1344 (1977)

\end{thebibliography}
\end{document}